\documentclass{article}

 \usepackage[preprint]{neurips_2026}

\usepackage[utf8]{inputenc} 
\usepackage[T1]{fontenc}    
\usepackage{hyperref}       
\usepackage{url}            
\usepackage{booktabs}       
\usepackage{amsfonts}       
\usepackage{nicefrac}       
\usepackage{microtype}      
\usepackage{xcolor}         

\usepackage[normalem]{ulem}
\usepackage{booktabs, array, makecell}
\usepackage{graphicx} 
\usepackage{algorithm}
\usepackage{algpseudocode}
\usepackage{amsthm}
\usepackage{lipsum}
\usepackage{epstopdf}
\usepackage{mathtools}
\usepackage{enumitem}
\usepackage{soul}
\usepackage{multirow}
\usepackage[capitalize,noabbrev]{cleveref}
\usepackage{amssymb}
\usepackage{tikz}
\newcommand{\R}{\mathbb{R}}

\newcommand{\N}{\mathbb{N}}
\newtheorem{assumption}{Assumption}

\newtheorem{lemma}{Lemma}
\newtheorem{theorem}{Theorem}
\newtheorem{definition}{Definition}
\newtheorem{remark}{Remark}

\newcommand{\KL}{\mathcal{KL}}
\newcommand{\KK}{\mathcal{K}}
\newcommand{\KKinf}{\mathcal{K}_\infty}

\title{From Cursed to Competitive: Closing the ZO–FO Gap via Input-to-State Stability}

\author{%
  Amir Ali Farzin
    \\
  School of Engineering\\
  Australian National University\\
  Canberra, ACT 2601 \\
  \texttt{amirali.farzin@anu.edu.au} \\
  \And
  Philipp Braun \\
  School of Engineering \\
  Australian National University\\
  Canberra, ACT 2601 \\
  \texttt{philipp.braun@anu.edu.au} \\
  \AND
  Iman Shames \\
  Department of Electrical and Electronic Engineering\\
  University of Melbourne \\
  Melbourne, VIC 3010 \\
  \texttt{iman.shames@unimelb.edu.au} \\
}

\begin{document}

\maketitle

\begin{abstract}
While it is generally understood that zeroth-order (ZO) algorithms have an extra dependency on their number of iterations for any choice of parameters, compared to their first-order (FO) counterparts, in this work, we show that under several conditions, in expectation, ZO methods do not suffer from extra dimension dependencies in their convergence rates with respect to their FO counterparts. We look at optimisation algorithms from the dynamical systems perspective and analyse the conditions under which one can formulate the average of a ZO algorithm as the average of its FO counterpart with bounded perturbations with values dependent on design parameters. Then, using input-to-state stability properties, we show ZO methods follow the same decay rate as their FO counterparts and converge to a neighbourhood of the fixed point of FO methods, where its radius depends on the bound of the norm of the perturbations, which can be made arbitrarily small. The theoretical findings are illustrated via numerical examples.
\end{abstract}

\section{Introduction}\label{sec:intro}

In this paper, we study a minimisation problem of the form
\begin{equation}\label{eq:main}
	\min_{x\in \mathbb{R}^n} \quad f(x)
\end{equation}
where $f:\mathbb{R}^n\rightarrow \mathbb{R}$ is continuously differentiable, and bounded below. Such problems arise routinely in machine learning, signal processing, and control. While previously, in the literature, it was commonly understood
that convergence complexity bounds for zeroth-order (ZO) methods have extra dimension dependencies compared to their first-order (FO)
counterparts, in this work, we analyse 
conditions under which it is guaranteed that ZO methods, in expectation, have no extra dimension dependencies in their convergence rate compared to their FO counterparts.
Here, we study iterative algorithms and their convergence properties
from the dynamical system perspective and analyse them by studying input-to-state stability (ISS) properties.

\textit{Motivation:} In many modern applications, gradient information is either unavailable, prohibitively expensive to compute, or simply does not exist in closed form. Examples include black-box adversarial attacks~\citep{chen2017zoo,ye2018hessian,madry2017towards}, reinforcement learning with opaque simulators~\citep{salimans2017evolution,choromanski2018structured}, hyperparameter tuning~\citep{snoek2012practical}, simulation-based engineering design~\citep{conn2009introduction}, and memory-efficient fine-tuning of large language models~\citep{malladi2023fine}. In these settings, one must resort to ZO methods that construct gradient surrogates from function evaluations alone. While attractive for their simplicity and broad applicability, ZO methods have traditionally been seen as having fundamentally weaker convergence guarantees than their FO counterparts, independent of the parameter selection.

\textit{The dimension-dependence gap:} The seminal work of~\cite{nesterov_random_2017} established the modern framework for ZO optimisation based on Gaussian smoothing and showed that, for smooth convex problems, ZO gradient descent requires at most $n$ times more iterations than standard gradient descent, where $n$ is the dimension. This $\mathcal{O}(n)$ factor has become the standard penalty attributed to working without gradient information. Subsequent works have refined and extended these bounds across convex~\citep{duchi2015optimal,balasubramanian2018zeroth}, nonconvex stochastic~\citep{ghadimi2013stochastic}, and structured non-convex settings such as quasar-convex, submodular, and Polyak-\L{}ojasiewicz functions~\cite{farzin2025minimisationqc,farzin2025minimisation,farzin2024minimisation}. In each case, the convergence bounds carry an explicit dependence on $n$ beyond what FO methods require. Efforts to reduce this overhead via sparsity~\citep{wang2018stochastic,balasubramanian2022zeroth} or effective dimension~\citep{yue2023zeroth} typically require additional structural assumptions (e.g., gradient sparsity or a fast-decaying Hessian spectrum) that may not hold in general.

\textit{Related work:} On the ZO optimisation side, building on~\cite{nesterov_random_2017,ghadimi2013stochastic}, ZO variants of classical FO algorithms have been developed, including accelerated methods~\citep{dvurechensky2020accelerated}, variance-reduced schemes~\citep{liu2018zeroth}, and minimax methods~\citep{liu2020min,wang2020zeroth,farzin2025min,farzin2026solving}. Notably, the MeZO framework~\citep{malladi2023fine} demonstrated that ZO-SGD can fine-tune LLMs with billions of parameters at competitive performance, attributed to the favourable loss landscape induced by pre-training; subsequent work~\citep{zhang2024revisiting} benchmarked broader ZO optimiser families in this setting. On the dynamical systems side, interpreting optimisation algorithms as discrete-time dynamical systems is well established~\citep{lessard2016analysis,wilson2021lyapunov,farzin2026stability,daskalakis2018limit,farzin2025properties}. Input-to-state stability (ISS), introduced in~\cite{sontag1989smooth} and extended to discrete-time systems in~\cite{jiang2001input}, provides a natural framework for analysing robustness of stable equilibria to bounded perturbations and has been widely applied in nonlinear control~\citep{sontag2008input,kellett2023introduction}. Its application to bridge ZO and FO convergence theory is, to the best of our knowledge, novel. A detailed comparison of our results with those of prior ZO methods is provided in Table~\ref{tab:comparison} (Appendix~\ref{app:def}).

\textit{Contributions:} The main contributions of this paper are as follows. \\
    \hspace*{0.2cm}(i) We show that, under suitable conditions, the averaged dynamics of a Gaussian ZO algorithm approximate its FO counterpart, where the approximation error is bounded and can be made arbitrarily small by tuning the smoothing parameter $\mu$, the step size $h$, and the number of sampled directions $t$.\\
    \hspace*{0.2cm}(ii) Leveraging the ISS framework, we prove that ZO methods inherit the same transient convergence rate as their FO counterparts, i.e., the same exponential/asymptotic decay, and converge to a neighbourhood of the FO fixed point whose radius depends on the perturbation bound but carries no additional dependence on the number of iterations (Theorems~\ref{thm:ISS_exp} and~\ref{thm:ISS_asymp}). This provides a fundamentally different explanation of ZO convergence behaviour. The dimension dependence does not appear in the iteration complexity but rather in the size of the convergence neighbourhood. Moreover, the framework serves as a general-purpose tool. Whenever the FO algorithm is known to converge, the ISS machinery automatically yields convergence of its ZO counterpart to a neighbourhood of the FO fixed point, without requiring a separate, algorithm-specific proof. Our results concern \emph{iteration complexity}. The per-iteration cost of each ZO step involves $t$ function evaluations. It may depend on problem parameters, including the variance, to achieve a given neighbourhood radius.\\
    \hspace*{0.2cm}(iii) We apply this framework to GD, the heavy ball, and Nesterov's accelerated gradient. Under strong convexity, we derive explicit perturbation bounds (Lemmas~\ref{lem:q_GDsc},~\ref{lem:q_HBsc},~\ref{lem:q_NAGsc}) and dimension-free convergence rate results (Theorems~\ref{th:GDsc},~\ref{th:HBsc},~\ref{th:NAGsc}). For non-strongly-convex objectives, we show that $L_2$ regularisation provides sufficient contraction, enabling the same dimension-free analysis with a controlled trade-off between regularisation strength and neighbourhood size (Lemma~\ref{lem:q_GDreg} and Theorem~\ref{th:GD_reg}). \\ 
    \hspace*{0.2cm}(iv) We validate the theoretical findings through numerical experiments, confirming that ZO trajectories closely track their FO counterparts and demonstrating how $\mu$, $h$, and $t$ control the proximity.

\textit{Outline:} 
Preliminaries
are introduced in Section~\ref{sec:per}. In Section~\ref{sec:main},  the main results on the dimensional independency of ZO methods are given. In Section~\ref{sec:exps}, some illustrative examples are studied. The conclusions and future research directions are discussed in Section~\ref{sec:conc}. Additional details and the proof of the technical results can be found in Appendices~\ref{app:def}-~\ref{app:exps}.

\textit{Notation:}
For $x,y\in\R^n$, we write $\langle x,y\rangle = x^\top y$, and let $\|\cdot\|$ denote the Euclidean norm of a vector and the corresponding spectral norm of a matrix. For $\delta>0$ and $z^\ast \in \R^n$, we define $\mathcal{B}_\delta(z^\ast) := \{z\in\R^n \mid \|z-z^\ast\|\le \delta\}$. For $f:\R^{n}\to\R$, we denote the gradient and Hessian by $\nabla f$ and $\nabla^2 f$, respectively. The identity matrix of proper size is denoted by $\mathrm{I}$, and $\mathrm{Id}$ denotes the identity function $\mathrm{Id}(s)=s$. A continuous function $\alpha:[0,a)\to[0,\infty)$ (with $a>0$ or $a=\infty$) is of class $\mathcal{K}$ if it is strictly increasing and $\alpha(0)=0$. A function $\alpha$ is of class $\mathcal{K}_\infty$ if $\alpha\in\mathcal{K}$ and $\alpha(s)\to\infty$ as $s\to\infty$. A continuous function $\beta:[0,\infty)\times[0,\infty)\to[0,\infty)$ is of class $\mathcal{KL}$ if, for each fixed $t\ge 0$, the map $s\mapsto\beta(s,t)$ belongs to $\mathcal{K}$, and for each fixed $s\ge 0$, the map $t\mapsto\beta(s,t)$ is decreasing with $\beta(s,t)\to 0$ as $t\to\infty$.

\section{Preliminaries}\label{sec:per}
In this section, we present
preliminaries on dynamical systems, ISS,
and ZO Gaussian 
oracles.

\subsection{Dynamical systems and perturbations}
We study optimisation problem~\eqref{eq:main} through the lens of discrete-time dynamical systems, interpreting an algorithm as
\begin{align}
    z_{k+1} = w(z_k) \qquad (\text{or } z^+ = w(z)) ,
    \label{eq:discrete_time_sys}
\end{align}
with state $z\in\R^n$ and update map $w:\R^n\to\R^n$. A point $z^e\in\R^n$ is an equilibrium (fixed point) of $w$ if $w(z^e)=z^e$. To analyse stability of equilibria, we use standard notions from discrete-time systems theory; see, e.g.,~\cite[Ch.~2, Ch.~5]{kellett2023introduction}.

\begin{definition}\label{def:stability}
    An equilibrium $z^e\in\R^n$ of~\eqref{eq:discrete_time_sys} is \emph{stable} if there exist $\alpha\in \mathcal{K}_{\infty}$ and $\delta>0$ such that
    $\|z_k - z_e\| \leq \alpha(\|z_0 - z_e\|)$ for all $k\in \N$ and $z_0 \in \mathcal{B}_{\delta}(z^e)$.
    If $z^e$ is not stable, it is called \emph{unstable}.
    The equilibrium is \emph{asymptotically stable} 
    if there exist $\beta\in \mathcal{KL}$ and $\delta>0$ such that
    \begin{align}
        \|z_k - z_e\| \leq \beta(\|z_0 - z_e\|,k) \qquad \forall \ k\in \N, \quad \forall \ z_0 \in \mathcal{B}_{\delta}(z^e). \label{eq:asymp_stab} 
    \end{align}
    If $\beta$ in \eqref{eq:asymp_stab} has the form $\beta(r,k)=M r \nu ^k$ for $M>0$ and $\nu \in (0,1)$, then $z^e$ is \emph{exponentially stable}.
\end{definition}

For an optimisation algorithm, $\mathcal{KL}$-stability or asymptotic stability of an equilibrium of~\eqref{eq:discrete_time_sys} is equivalent to local convergence to the associated critical point. We now consider a perturbed version of~\eqref{eq:discrete_time_sys}:
\begin{align}
    z_{k+1} = \bar{w}(z_k,q_k) = w(z_k) + q_k ,
    \label{eq:dt_perturbed}
\end{align}
where $\{q_k\}_{k\in\N}$ models perturbations (e.g., inexact gradients or noise). When these perturbations are bounded, robustness can be studied via ISS~\cite{sontag1996new,jiang2001input}.
\begin{definition}[Input-to-state stability] 
The system~\eqref{eq:dt_perturbed} with $z_k,q_k\in\R^n$ is (locally) input-to-state stable (ISS) if there exist $\delta>0$, $\beta\in\mathcal{KL}$, and $\gamma\in\mathcal{K}$ such that, for all sequences $\{q_k\}_{k\ge 0}$,
\begin{equation}
    \|z_k - z^e\| \le \beta(\|z_0 - z^e\|,k) + \gamma\Big(\sup_{0\le j\le k} \|q_j\|\Big) \qquad  \forall \ k\in \N, \quad \forall \ z_0\in\mathcal{B}_\delta(z^e).
    \label{eq:iss_estimate}
\end{equation}
\end{definition}
The ISS estimate~\eqref{eq:iss_estimate} ensures that the state remains bounded for bounded inputs and recovers asymptotic stability when $q_k\equiv 0$~\cite{sontag1996new,kellett2023introduction}. We adopt the additive perturbation structure in~\eqref{eq:dt_perturbed}, which is sufficient for our analysis and simplifies the ISS arguments. A convenient way to verify ISS is via Lyapunov functions. The following theorems show that~\eqref{eq:dt_perturbed} inherits the transient decay of~\eqref{eq:discrete_time_sys} and converges to a neighbourhood of its equilibria, whenever the perturbations are uniformly bounded and the unperturbed system is exponentially or asymptotically stable.

\begin{theorem}\label{thm:ISS_exp}
Consider the  
system \eqref{eq:dt_perturbed}
with equilibrium $z^e \in \R^n$ for $q_k=0$ for $k\in \N$, and suppose there exist a  
locally Lipschitz function $V:\R^n\rightarrow \R_{\geq 0}$,
constants $c\in(0,1)$ and $R>0$, and constants $c_1,c_2\in \R_{>0}$,
such that
\begin{alignat}{3}
c_1 \|z-z^e\|^2 \le V(z) &\le c_2\|z-z^e\|^2,  && \qquad \forall \ z \in \mathcal{B}_R(z^e) \label{eq:sandwich}\\
    V(\bar{w}(z,0)) - V(z) &\le -c\,V(z), && \qquad \forall \ z \in \mathcal{B}_R(z^e) \label{eq:decrease}
\end{alignat}
whenever $q_k=0$ for all $k\in \N$.
Then there exist a class-$\KK$ function $\gamma$ and radii $\bar{r},\bar{q}>0$ such that, for the perturbed system \eqref{eq:dt_perturbed}
with $\sup_k\|q_k\|\le \bar{q}$ and initial condition satisfying $z_0\in \mathcal{B}_{\bar{r}}(z^e)$, 
the corresponding solution satisfies
\begin{equation}\label{eq:ISS_bound}
 \|z_k-z^e\|
\le \sqrt{\frac{c_2}{c_1}}\|z_0-z^e\|(1-c)^{\frac{k}{2}}
   +\gamma\Bigl(\sup_{0\le j\le k}\|q_j\|\Bigr),
\qquad k\ge 0.
\end{equation}
In particular, $z^e$ is locally exponentially stable for $q_k = 0$, $k\in \N$, and locally ISS with respect to 
perturbations $q_k\in \mathcal{B}_{\bar{q}}(0)$, $k\in \N$. 
\end{theorem}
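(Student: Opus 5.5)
The plan is a standard Lyapunov-based ISS argument, using the local Lipschitz continuity of $V$ to control how the additive perturbation enters. Fix the compact ball $\mathcal{B}_R(z^e)$ and let $L$ be a Lipschitz constant of $V$ on a slightly larger compact set, say $\mathcal{B}_{2R}(z^e)$; such an $L$ exists because locally Lipschitz functions are Lipschitz on compact sets. For $z\in\mathcal{B}_R(z^e)$ and $\|q\|\le\bar q$, write $\bar w(z,q)=w(z)+q$. Then the Lipschitz bound and \eqref{eq:decrease} give
\begin{equation*}
V(\bar w(z,q)) \le V(w(z)) + L\|q\| \le (1-c)V(z) + L\|q\|,
\end{equation*}
provided both $w(z)$ and $w(z)+q$ lie in $\mathcal{B}_{2R}(z^e)$. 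The first containment follows from \eqref{eq:sandwich} and \eqref{eq:decrease}: we have $c_1\|w(z)-z^e\|^2\le V(w(z))\le (1-c)c_2R^2$, so $\|w(z)-z^e\|\le\sqrt{c_2/c_1}\,R$. Hence I would work on the ball of radius $\bar R:=\max\{1,\sqrt{c_2/c_1}\}R+\bar q$ for the Lipschitz constant. The catch is that \eqref{eq:decrease} is only assumed on $\mathcal{B}_R(z^e)$, so the real task is forward invariance.

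\textbf{Forward invariance (the main obstacle).} I choose $\bar r$ and $\bar q$ small enough that the sublevel set $\Omega:=\{z\in\mathcal{B}_R(z^e): V(z)\le c_1R^2\}$ is invariant and contains $\mathcal{B}_{\bar r}(z^e)$. Note that $\Omega$ contains every $z$ with $\|z-z^e\|\le \sqrt{c_1/c_2}\,R=:\bar r$. For $z\in\Omega$ the recursion gives $V(z^+)\le (1-c)c_1R^2+L\bar q$, and this is at most $c_1R^2$ whenever $\bar q\le c\,c_1R^2/L$. A subtlety is that the Lipschitz estimate requires $z^+$ to be in the enlarged ball, which holds by the triangle inequality since $\|z^+-z^e\|\le\|w(z)-z^e\|+\bar q$. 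Once $V(z^+)\le c_1R^2$ is known, \eqref{eq:sandwich} should give $z^+\in\mathcal{B}_R(z^e)$. However, \eqref{eq:sandwich} only holds inside $\mathcal{B}_R(z^e)$, so I need a short continuity or contradiction argument here. If that argument is not clean, I will instead assume the sandwich bound on the enlarged ball (by shrinking $R$ at the start), and then $c_1\|z^+-z^e\|^2\le V(z^+)\le c_1R^2$ settles it.

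\textbf{Iterating and converting to a state bound.} With invariance established, unrolling the recursion $V_{k+1}\le(1-c)V_k+L\|q_k\|$ gives
\begin{equation*}
V(z_k)\le (1-c)^kV(z_0) + \frac{L}{c}\sup_{0\le j\le k}\|q_j\|.
\end{equation*}
Applying \eqref{eq:sandwich} on both sides and using $\sqrt{a+b}\le\sqrt a+\sqrt b$, I get
\begin{equation*}
\|z_k-z^e\|\le \sqrt{\tfrac{c_2}{c_1}}(1-c)^{k/2}\|z_0-z^e\| + \sqrt{\tfrac{L}{c\,c_1}\,s},\qquad s=\sup_{0\le j\le k}\|q_j\|.
\end{equation*}
This is exactly \eqref{eq:ISS_bound} with $\gamma(s)=\sqrt{Ls/(c c_1)}$, which is continuous, strictly increasing and zero at zero, so $\gamma\in\KK$. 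The concluding statements then follow directly. Setting $q\equiv0$ gives $\beta(r,k)=\sqrt{c_2/c_1}\,r\,(1-c)^{k/2}$, which has the exponential form of Definition~\ref{def:stability} with $\nu=\sqrt{1-c}$. The full estimate is the local ISS bound \eqref{eq:iss_estimate} with $\delta=\bar r$.
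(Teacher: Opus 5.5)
Your route is the same as the paper's. You use the one-step estimate $V(z^+)\le(1-c)V(z)+L\|q\|$ from the Lipschitz property, unroll geometrically, and convert via the sandwich with $\gamma(s)=\sqrt{Ls/(c\,c_1)}$. You also use invariance of $\{V\le c_1R^2\}$ with $\bar r=\sqrt{c_1/c_2}\,R$ and $\bar q=c\,c_1R^2/L$. The unrolling and conversion steps are fine. The gap is in the containment and invariance step, which the paper's proof also passes over: it asserts $\{z: V(z)\le c_1R^2\}\subset\mathcal{B}_R(z^e)$ directly from \eqref{eq:sandwich}.

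\textbf{The gap.} You say the containment of $w(z)$ ``follows from \eqref{eq:sandwich} and \eqref{eq:decrease}'', but this is circular. The lower bound $c_1\|w(z)-z^e\|^2\le V(w(z))$ is only available once you already know $w(z)\in\mathcal{B}_R(z^e)$. This is the same defect you flag for $z^+$. Shrinking $R$ does not cure it, because nothing in the hypotheses controls where $w$ sends points of the ball, and $V$ may be small far from $z^e$.

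In fact the statement is false as literally written. Take $n=1$, $z^e=0$, $R=1$, $c_1=c_2=1$, $c\le 3/4$, and $V(z)=\min\{z^2,(z-10)^2\}$. Let $w(0)=0$, $w(1/m)=10$ for all integers $m\ge 1$, $w(10)=10$, and $w(z)=z/2$ otherwise. All hypotheses hold on $[-1,1]$. Yet for every $\bar r>0$, the initial condition $z_0=1/m<\bar r$ with $q\equiv 0$ gives $z_k=10$ for all $k\ge 1$, which violates \eqref{eq:ISS_bound}. So your ``continuity or contradiction argument'' needs an extra hypothesis, and you have to state it.

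\textbf{The fix.} The natural extra hypothesis is continuity of $w$ at $z^e$; it holds for GD, HB and NAG with Lipschitz gradients. With it, the repair is short.
\begin{itemize}
\item Pick $\rho\in(0,R]$ with $\|w(z)-z^e\|\le R/2$ for all $z\in\mathcal{B}_\rho(z^e)$.
\item Let $L$ be a Lipschitz constant of $V$ on $\mathcal{B}_R(z^e)$, fixed before $\bar q$. This also removes the circular dependence between your $L$, $\bar R$ and $\bar q$.
\item Set $\bar q:=\min\{R/2,\,c\,c_1\rho^2/L\}$ and $\bar r:=\sqrt{c_1/c_2}\,\rho$.
\end{itemize}
Now take $z\in\Omega_\rho:=\{z\in\mathcal{B}_\rho(z^e): V(z)\le c_1\rho^2\}$ and $\|q\|\le\bar q$. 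Both $w(z)$ and $w(z)+q$ lie in $\mathcal{B}_R(z^e)$, so the Lipschitz estimate together with \eqref{eq:decrease} gives $V(w(z)+q)\le(1-c)c_1\rho^2+L\bar q\le c_1\rho^2$. The lower sandwich bound now legitimately applies at $w(z)+q$, giving $\|w(z)+q-z^e\|\le\rho$. Hence $\Omega_\rho$ is forward invariant. It contains $\mathcal{B}_{\bar r}(z^e)$, since $c_1\le c_2$ forces $\bar r\le\rho$ and $V\le c_2\bar r^2=c_1\rho^2$ there. The rest of your argument then goes through unchanged.
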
 

The function $V$ in Theorem \ref{thm:ISS_exp} is called Lyapunov function.
A proof of Theorem~\ref{thm:ISS_exp} is given in Appendix~\ref{app:per}. The above theorem shows that the perturbed system exhibits the same local exponential decay rate as the unperturbed system and converges to a neighbourhood of the equilibrium of the unperturbed system, where the size of the neighbourhood depends on the supremum of the norm of the perturbations. For extension to the setting where the unperturbed system is asymptotically stable but not exponentially,  
we first present the following lemma.

\begin{lemma}\label{lem:comparison}
Let $\varphi\colon[0,\infty)\to[0,\infty)$ be continuous, strictly increasing, with $\varphi(0)=0$, and $\mathrm{Id}-\varphi$ a class-$\KK$ function. Let $\{v_k\}_{k\ge 0}$ be a non-negative sequence satisfying
\[
v_{k+1}\le \varphi(v_k)+d, \qquad \forall \  k\ge 0,
\]
for some constant $d \in [0,b],$ where $b = \underset{r\rightarrow\infty}{\lim} r-\varphi(r)$. Then the following properties are satisfied. \\

\hspace*{0.2cm} (i) There exists a class-$\KL$ function $\hat\beta$ such that, if $d=0$, then $v_k\le\hat\beta(v_0,k)$ for all $k\ge 0$. \\
\hspace*{0.2cm} (ii) For every $\varepsilon>0$, there exists $\bar d(\varepsilon)>0$ such that, if $d\le\bar d(\varepsilon)$, then $\limsup_{k\to\infty}v_k\le\varepsilon$. \\
\hspace*{0.2cm} (iii) For every $R>0$, there exists a class-$\KK$ function $\hat\gamma_R$ such that, with $\hat\beta$ as in~\textup{(i)},
\[
v_k \le \hat\beta(v_0,k)+\hat\gamma_R(d), \qquad \forall \  k\ge 0, \quad \forall \  v_0\in[0,R], \quad \forall \ d\ge 0.
\]

\end{lemma}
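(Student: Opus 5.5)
The plan is to treat the recursion $v_{k+1}\le\varphi(v_k)+d$ as a scalar comparison system and apply the standard discrete-time comparison arguments (cf.~\cite{jiang2001input,kellett2023introduction}). Throughout, set $\rho:=\mathrm{Id}-\varphi\in\KK$, so that $\varphi(s)=s-\rho(s)<s$ for every $s>0$.

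\emph{Part (i).} With $d=0$ we have $v_{k+1}\le\varphi(v_k)$. Since $\varphi$ is increasing, induction gives $v_k\le\varphi^{k}(v_0)$, where $\varphi^k$ is the $k$-fold composition. I would define $\hat\beta(s,k):=\varphi^k(s)$ for integer $k$ and interpolate linearly in $k$ between consecutive integers. We then check the $\KL$ properties:
\begin{itemize}[leftmargin=*]
\item For fixed $k$, the map $s\mapsto\varphi^k(s)$ is continuous, strictly increasing and vanishes at $0$, so it is class $\KK$.
\item For fixed $s$, the sequence $\varphi^k(s)$ is nonincreasing because $\varphi(r)\le r$. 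Its limit $\ell$ satisfies $\varphi(\ell)=\ell$ by continuity. Hence $\rho(\ell)=0$, which forces $\ell=0$.
\end{itemize}
If strict decrease in $k$ is needed, I would replace $\hat\beta$ by $\hat\beta(s,k)+s e^{-k}$.

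\emph{Parts (ii) and (iii): invariance of sublevel sets.} The key step is to show that sublevel sets are invariant up to a margin. Fix $\varepsilon>0$ and choose $\bar d(\varepsilon):=\min\{\rho(\varepsilon),b\}/2$ (or simply $\rho(\varepsilon)$ if $\varepsilon$ is such that $\rho(\varepsilon)<b$). Then:
\begin{itemize}[leftmargin=*]
\item If $v_k\ge\varepsilon$, then $v_{k+1}\le v_k-\rho(v_k)+d\le v_k-\rho(\varepsilon)/2$.
\item If $v_k\le\varepsilon$, then $v_{k+1}\le\varphi(\varepsilon)+d\le\varepsilon-\rho(\varepsilon)+d\le\varepsilon$.
\end{itemize}
Thus the set $[0,\varepsilon]$ is forward invariant. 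Starting from above $\varepsilon$, the sequence decreases by at least $\rho(\varepsilon)/2$ per step until it enters $[0,\varepsilon]$, which happens in finitely many steps. This gives (ii).

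\emph{Part (iii).} For the gain we want $\hat\gamma_R(d)$ to satisfy $\hat\gamma_R(d)\ge$ the smallest $\varepsilon$ with $\rho(\varepsilon)\ge d$. The natural candidate is $\rho^{-1}(d)$ when $d<b$; for large $d$ we instead use the crude bound from iterating $v_{k+1}\le v_k+d$. Since $v_0\le R$, only finitely many steps matter before invariance takes over, which is why the gain is allowed to depend on $R$.

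The combined estimate follows from a standard max argument. Let $\varepsilon_d:=\rho^{-1}(2d)$ whenever $2d<b$.
\begin{itemize}[leftmargin=*]
\item While $v_k>\varepsilon_d$, we have $d\le\rho(v_k)/2$, so $v_{k+1}\le v_k-\rho(v_k)/2=:\tilde\varphi(v_k)$. Here $\mathrm{Id}-\tilde\varphi=\rho/2$ is class $\KK$.
\item Applying part (i) to $\tilde\varphi$ gives $v_k\le\tilde\beta(v_0,k)$ during this phase. Once the sequence enters $[0,\varepsilon_d]$ it remains there, by the invariance shown above with $\rho(\varepsilon_d)=2d\ge d$.
\end{itemize}
Hence $v_k\le\max\{\tilde\beta(v_0,k),\rho^{-1}(2d)\}\le\tilde\beta(v_0,k)+\rho^{-1}(2d)$.

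\emph{Main obstacle.} The statement requires the same $\hat\beta$ as in (i), not $\tilde\beta$. I would resolve this by defining $\hat\beta$ in (i) from the start using the slower map $\tilde\varphi=\mathrm{Id}-\rho/2\ge\varphi$. This is still valid for (i), since $\varphi^k\le\tilde\varphi^k$. Two further points need care:
\begin{itemize}[leftmargin=*]
\item For $d$ in the range $2d\ge b$, or for large $d$ generally, set $\hat\gamma_R(d):=R+\rho^{-1}(\cdot)$ extended suitably. A concrete choice is $\hat\gamma_R(d)=\max\{\rho_R^{-1}(2d),\,(R+d)\,d/\min(d,b/4)\}$, where $\rho_R$ is $\rho$ restricted to $[0,R+1]$ and extended to a $\KKinf$ function by adding a linear term.
\item We must verify that the trajectory stays in $[0,\max(R,\cdot)]$, so that the restriction to $[0,R+1]$ is harmless.
\end{itemize}
Ensuring that $\hat\gamma_R$ is a genuine class-$\KK$ function on all of $d\ge0$, with continuity and strict monotonicity at the junction, is the fiddly part, and I expect most of the effort to go there.
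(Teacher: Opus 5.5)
Your parts (i) and (ii) match the paper's: iterates $\varphi^{k}$ are dominated by a $\KL$ function, and $\bar d(\varepsilon)=\rho(\varepsilon)/2$ gives invariance of $[0,\varepsilon]$ with a uniform decrease of $\rho(\varepsilon)/2$ above it. For (iii) you take a genuinely different route, and its core is correct.

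\emph{The paper's route.} It keeps $\hat\beta(s,k)=\varphi^{k}(s)$ and compares additively with $u_k=\varphi^k(v_0)$ through $v_k-u_k\le kd$. This is cut off at the entry time $K^*$ of $v_k$ into $[0,\sigma(d)+d]$, with $\sigma=\rho^{-1}$, giving the gain $\max\{K^*d,\sigma(d)+d\}$.

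\emph{Your route.} You slow the comparison map to $\tilde\varphi=\mathrm{Id}-\rho/2=(\mathrm{Id}+\varphi)/2$. This map is strictly increasing with $\mathrm{Id}-\tilde\varphi=\rho/2\in\KK$, so (i) applies to it. Since $\varphi^k\le\tilde\varphi^k$, you may take $\hat\beta:=\tilde\beta$ already in (i). You then get $v_k\le\max\{\tilde\beta(v_0,k),\rho^{-1}(2d)\}$ whenever $2d<b$.

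\emph{Comparison.} Your argument is the standard one. It costs a factor $1/2$ in the decay function, but gives an explicit gain that is independent of $R$ and visibly of class $\KK$. The paper keeps the faster decay, but for its gain to vanish at $0$ it needs $K^*(d)\,d\to0$ as $d\downarrow0$. That is delicate, because $K^*(d)$ itself is unbounded as $d\downarrow0$ (already for $v_0=R$). Your construction avoids this issue entirely. When $b=\infty$, which is the only case used in Theorem~\ref{thm:ISS_asymp}, $\hat\gamma(d)=\rho^{-1}(2d)$ settles (iii) for all $d\ge0$.

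\emph{The gap: your large-$d$ patch when $b<\infty$.}
\begin{itemize}
\item Your proposed $\hat\gamma_R(d)=\max\{\rho_R^{-1}(2d),(R+d)d/\min(d,b/4)\}$ tends to $R$ as $d\downarrow0$, so it is not of class $\KK$.
\item It also stays bounded on $[0,b)$. Any admissible gain must satisfy $\hat\gamma_R(d)\ge\rho^{-1}(d)$: take $v_0=0$ with equality in the recursion, and $v_k$ increases to $\rho^{-1}(d)$. This lower bound blows up as $d\uparrow b$.
\item Iterating $v_{k+1}\le v_k+d$ only gives $R+kd$, which is not uniform in $k$.
\item Restricting $\rho$ to $[0,R+1]$ is not justified, since trajectories leave $[0,R+1]$ once $\rho^{-1}(d)>R+1$.
\end{itemize}

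\emph{The repair is short.} Run your invariance computation with $\varepsilon=\rho^{-1}(d)$, and use $v_{k+1}<v_k$ above that level. This gives $v_k\le\max\{v_0,\rho^{-1}(d)\}\le R+\rho^{-1}(d)$ for every $d<b$. Then set $\hat\gamma_R(d):=\rho^{-1}(2\min\{d,b/4\})+\tfrac{4R}{b}\,d+\rho^{-1}(d)$. This is of class $\KK$ on $[0,b)$, dominates $\rho^{-1}(2d)$ for $d\le b/4$, and dominates $R+\rho^{-1}(d)$ for $d\ge b/4$.

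Do not try to reach $d\ge b$. For $\rho(s)=b\bigl(1-(1+s)^{-2}\bigr)$ with $b<1/2$ and $d=b$, the recursion with equality diverges, so no finite gain exists there.
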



\begin{theorem}\label{thm:ISS_asymp}
Consider the 
system \eqref{eq:dt_perturbed}
with equilibrium $z^e \in \R^n$ for $q_k=0$ for $k\in \N$, and suppose there exist a locally Lipschitz
function $V:\R^n\rightarrow \R_{\geq 0}$, 
class-$\KKinf$ functions $\alpha_1,\alpha_2,\alpha_3$, and a constant $R>0$
such that
\begin{alignat}{3}
    \alpha_1(\|z-z^e\|) \le V(z) & \le \alpha_2(\|z-z^e\|), \qquad  && \forall \ z \in \mathcal{B}_R(z^e), \label{eq:sandwich1}\\
    V(\bar{w}(z,0)) - V(z) &\le -\alpha_3(\|z-z^e\|), \qquad  && \forall \ z \in \mathcal{B}_R(z^e). \label{eq:decrease1}
\end{alignat}
Then there exist a class-$\KL$ function $\beta$, a class-$\KK$ function $\gamma$, and radii $\bar{r},\bar{q}>0$ such that, for the perturbed
system \eqref{eq:dt_perturbed}
with $\sup_k\|q_k\|\le \bar{q}$ and $z_0\in \mathcal{B}_{\bar{r}}(z^e)$, 
the solution satisfies
\begin{equation}\label{eq:ISS}
\|z_k - z^e\| \le \beta(\|z_0-z^e\|,\,k) + \gamma \Big(\sup_{0\le j\le k}\|q_j\|\Big), \qquad k\ge 0.
\end{equation}
\end{theorem}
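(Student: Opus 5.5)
The plan is to reduce the perturbed dynamics to a scalar recursion for $v_k := V(z_k)$ of the form $v_{k+1}\le \varphi(v_k)+d$, and then apply Lemma~\ref{lem:comparison} and translate back through the sandwich bounds \eqref{eq:sandwich1}.

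\textbf{Step 1: one-step bound.} Let $L$ be a Lipschitz constant of $V$ on the compact set $\mathcal{B}_{R+\bar q}(z^e)$. For $z\in\mathcal{B}_R(z^e)$ and $\|q\|\le\bar q$ we have $\bar w(z,q)=w(z)+q$, so by Lipschitz continuity and \eqref{eq:decrease1},
\begin{align*}
V(w(z)+q)&\le V(w(z))+L\|q\|\le V(z)-\alpha_3(\|z-z^e\|)+L\|q\|\\
&\le V(z)-\alpha_3\circ\alpha_2^{-1}(V(z))+L\|q\|.
\end{align*}
Here we need $w(z)\in\mathcal{B}_{R+\bar q}(z^e)$. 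This follows since $V(w(z))\le V(z)\le\alpha_2(R)$. Alternatively, $w$ is continuous with $w(z^e)=z^e$, so we may shrink radii if needed.

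\textbf{Step 2: comparison function.} Set $\tilde\alpha:=\alpha_3\circ\alpha_2^{-1}$. The map $s\mapsto s-\tilde\alpha(s)$ need not be increasing, and $\tilde\alpha(s)\le s$ holds only along the recursion. We therefore replace $\tilde\alpha$ by a smaller $\mathcal{K}$ function $\rho$ such that both $\rho$ and $\mathrm{Id}-\rho$ are of class $\mathcal{K}$ on $[0,\alpha_2(R)]$, for instance $\rho(s)=\tfrac12\min\{\tilde\alpha(s),s\}$ suitably regularised. This is the standard construction (cf.\ Jiang--Wang). Extending $\rho$ linearly beyond $\alpha_2(R)$, the function $\varphi:=\mathrm{Id}-\rho$ satisfies the hypotheses of Lemma~\ref{lem:comparison}, and $v_{k+1}\le\varphi(v_k)+d$ with $d:=L\sup_j\|q_j\|$. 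Making $\rho$ legitimate, that is, continuous with $\varphi$ strictly increasing and $\mathrm{Id}-\varphi\in\mathcal{K}$, is the main technical obstacle. I would build it from $\min\{\tilde\alpha(s),s/2\}$ and add a small linear term to ensure strictness.

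\textbf{Step 3: invariance.} We must show that trajectories starting in $\mathcal{B}_{\bar r}(z^e)$ remain in $\mathcal{B}_R(z^e)$, so that Step 1 applies at every $k$. Choose $\bar q$ so that $L\bar q\le \rho(\alpha_1(R)/2)$ and $L\bar q\le\alpha_1(R)/2$. Choose $\bar r$ so that $\alpha_2(\bar r)\le\alpha_1(R)/2$. Let $S=\{V\le \alpha_1(R)\}$. If $v_k\le \alpha_1(R)$, there are two cases. If $v_k\ge\alpha_1(R)/2$, then $v_{k+1}\le v_k-\rho(v_k)+d\le v_k$. Otherwise $v_{k+1}\le \varphi(v_k)+d\le \alpha_1(R)/2+\alpha_1(R)/2$. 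Induction then keeps $v_k\le\alpha_1(R)$, hence $\|z_k-z^e\|\le R$. The case analysis must also respect causality, since the bound uses $\sup_{j\le k}$. This is handled because the recursion uses only $q_j$ for $j\le k$.

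\textbf{Step 4: conclusion.} Lemma~\ref{lem:comparison}(iii) with bound $\alpha_2(\bar r)$ on $v_0$ gives $v_k\le\hat\beta(v_0,k)+\hat\gamma(L\sup_{j\le k}\|q_j\|)$. Then
\[
\|z_k-z^e\|\le\alpha_1^{-1}\bigl(2\hat\beta(\alpha_2(\|z_0-z^e\|),k)\bigr)+\alpha_1^{-1}\bigl(2\hat\gamma(L\sup_{j\le k}\|q_j\|)\bigr),
\]
using $\alpha(a+b)\le\alpha(2a)+\alpha(2b)$. Defining $\beta(r,k):=\alpha_1^{-1}(2\hat\beta(\alpha_2(r),k))\in\mathcal{KL}$ and $\gamma(s):=\alpha_1^{-1}(2\hat\gamma(Ls))\in\mathcal{K}$ yields \eqref{eq:ISS}.
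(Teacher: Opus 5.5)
Your proposal follows essentially the same route as the paper's proof. The shared steps are: the Lipschitz/decrease estimate giving $v_{k+1}\le v_k-\alpha_3\circ\alpha_2^{-1}(v_k)+L\|q_k\|$; regularisation of $\alpha_3\circ\alpha_2^{-1}$ via the Jiang--Wang lemma (stated in the paper as Lemma~\ref{lem:i-k}, so no ad hoc construction is needed); Lemma~\ref{lem:comparison}(iii); translation back through \eqref{eq:sandwich1} using the weak triangle inequality for $\alpha_1^{-1}$; and forward invariance of a sublevel set of $V$. Your half-level case split is a minor variant of the paper's bound $v_k\le\max(v_0,\sigma(L_V\delta))$. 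One small fix is needed in Step~1: $V(w(z))\le\alpha_2(R)$ does not place $w(z)$ in $\mathcal{B}_{R+\bar q}(z^e)$. Use Step~3's restriction $V(z)\le\alpha_1(R)$ instead, which gives $V(w(z))\le\alpha_1(R)$ and hence $w(z)\in\mathcal{B}_R(z^e)$ through the same sublevel-set containment the paper relies on.
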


Proofs of Lemma~\ref{lem:comparison} and Theorem~\ref{thm:ISS_asymp} are given in Appendix~\ref{app:per}. Together with Theorem~\ref{thm:ISS_exp}, they show that the perturbed system converges to a neighbourhood of the equilibrium of the unperturbed system with the same transient decay profile, without any extra dependence on the iteration index $k$, where the neighbourhood size is determined by the perturbation bound. This provides a general mechanism for transferring convergence guarantees from FO algorithms to their ZO counterparts. Whenever an FO algorithm is known to converge, one only needs to verify that the averaged ZO dynamics can be written in the form~\eqref{eq:dt_perturbed} with bounded perturbations, and the ISS machinery automatically guarantees convergence to a neighbourhood of the same fixed point, inheriting the same decay profile. In Section~\ref{sec:main}, we analyse conditions under which such a formulation is possible. Crucially, since the perturbations originate from the ZO approximation rather than a model mismatch, disturbances or sensor noise, their magnitude can be manipulated through appropriate parameter selections.

\subsection{Gaussian smoothing}
Following \cite{nesterov_random_2017}, we define the Gaussian-smoothed version of $f$ with parameter $\mu>0$ as $f_\mu(x) = E_u[f(x+\mu u)]$, where $u\sim\mathcal{N}(0,I)$. Regardless of whether $f$ is differentiable, $f_\mu$ is always differentiable with $\nabla f_\mu(x) = E_u\!\left[\frac{f(x+\mu u)}{\mu}u\right]$ \cite{nesterov_random_2017}. We use the two-point random oracle
\begin{equation}\label{eq:eq23}
	g_{\mu}(x) = \tfrac{1}{\mu}(f(x+\mu u)-f(x))\, u,
\end{equation}
which satisfies $E_{u}[g_{\mu}(x)]=\nabla f_\mu(x)$. We call an algorithm a Gaussian ZO method if it uses~\eqref{eq:eq23} in place of gradients.
\begin{assumption}\label{lm20}
	The variance of the oracle $g_{\mu}$ defined in~\eqref{eq:eq23} is uniformly upper bounded by $\sigma^2\geq0$; i.e., $E_{u}[\|g_{\mu}(x)- \nabla f_{\mu}(x)\|^2]\leq \sigma^2$.
\end{assumption}
This is a standard assumption; see, e.g., \cite{maass2021zeroth,liu2020min,farzin2025min}. The variance can be reduced by averaging $t$ independent oracle evaluations per iteration, yielding $E_{u}\!\left[\|g_{\mu}(x_k)- \nabla f_{\mu}(x_k)\|^2\right]\leq \sigma^2/t$ while preserving unbiasedness~\citep{balasubramanian2022zeroth}. 
We note that since our convergence results are local (Theorems~1 and~2 hold on a ball $B_{\bar{r}}(z^e)$), Assumption~1 needs only to hold on this compact region. A relaxation to state-dependent variance is analysed in Remark~\ref{rem:relaxed_variance}.

\section{ZO methods as perturbed FO methods}\label{sec:main}

Without further assumptions, there is no guarantee that the expectation of a Gaussian ZO method can be formulated as a perturbed FO method with characterisable bounded 
perturbations. Here, we identify conditions under which such a formulation is possible. Then, Theorems~\ref{thm:ISS_exp} and~\ref{thm:ISS_asymp} imply that the convergence of the Gaussian ZO method has no extra dependence on the problem dimension and follows the same transient decay as the FO method. The proofs of all the lemmas and theorems of this section are given in Appendix~\ref{app:main}.

\subsection{Gradient descent and strong convexity}\label{sec:GDsc}
We focus on the
GD algorithm and assume that $f$ is strongly convex. The GD update rule is
\begin{align}\label{eq:GD_alg}
    z_{k+1} = z_k -h \nabla f(z_k),
\end{align}
where $h>0$ is the step size. Suppose $f$ has Lipschitz gradients with constant $L_1>0$. The ZO counterpart of GD is
\begin{align}\label{eq:ZOGD_alg}
    \tilde{z}_{k+1} = \tilde{z}_k -h g_\mu(\tilde{z}_k).
\end{align}
From \cite{nesterov_random_2017}, we know that $E[g_\mu]=\nabla f_\mu$. Let $U_k = \{u_1, \dots, u_k\}$ and define the averaged (deterministic) iterate of ZO-GD and its update rule as
\begin{align}\label{eq:ZOGD_avg1}
    \bar{z}_{k} = E_{U_k}[\tilde{z}_{k}], \qquad\bar{z}_{k+1} = \bar{z}_k -h E_{U_k}[\nabla f_\mu(\tilde{z}_k)].
\end{align}
This can be reformulated as
\begin{align}\label{eq:ZOGD_avg}
\bar{z}_{k+1} = \bar{z}_k -h \nabla f(\bar{z}_k) + q_k,
\end{align}
where
\begin{align}
\begin{split}
    q_k \! = \! h \bigl(\nabla f(\bar{z}_k) \! - \! E_{U_k}[\nabla f_\mu(\tilde{z}_k)]\bigr) \! = \! h \bigl(\nabla f(\bar{z}_k) \! - \!  \nabla f_\mu(\bar{z}_k)\bigr) \! + \! h\bigl(\nabla f_\mu(\bar{z}_k) \! - \! E_{U_k}[\nabla f_\mu(\tilde{z}_k)]\bigr).
\end{split} \label{eq:gen_q_ZOGD}
\end{align}

\begin{lemma}\label{lem:q_GDsc}
    Consider~\eqref{eq:GD_alg},~\eqref{eq:ZOGD_avg}, and~\eqref{eq:gen_q_ZOGD}. Let $f$ be $\beta$-strongly convex with Lipschitz gradients with constant $L_1>0$, and let $h\in\bigl(0,\frac{\beta}{2L_1^2}\bigr)$. Then
    \begin{align}\label{eq:q_bound_GDsc}
\sup_k \|q_k\| \leq h \frac{\mu}{2}L_1(n+3)^{3/2}+h L_1 \sqrt{\frac{2h\sigma^2}{t(\beta-2L_1^2h)}}.
\end{align}
\end{lemma}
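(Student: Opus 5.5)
The plan is to bound the two terms in the decomposition \eqref{eq:gen_q_ZOGD} separately and then add the bounds.

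\emph{First term.} For the smoothing bias I will use the standard Gaussian-smoothing estimate of \cite{nesterov_random_2017}: if $\nabla f$ is $L_1$-Lipschitz, then
\[
\|\nabla f_\mu(x)-\nabla f(x)\|\le \tfrac{\mu}{2}L_1(n+3)^{3/2}
\]
for every $x$. Applying it at $x=\bar z_k$ and multiplying by $h$ gives the first summand of \eqref{eq:q_bound_GDsc}, uniformly in $k$.

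\emph{Second term.} Since $\nabla f_\mu$ is also $L_1$-Lipschitz, Jensen's inequality gives
\[
\bigl\|\nabla f_\mu(\bar z_k)-E_{U_k}[\nabla f_\mu(\tilde z_k)]\bigr\|\le L_1\, E\|\tilde z_k-\bar z_k\|\le L_1\sqrt{E\|\tilde z_k-\bar z_k\|^2}.
\]
So it suffices to show that $D_k:=E\|\tilde z_k-\bar z_k\|^2$ satisfies $\sup_k D_k\le \frac{2h\sigma^2}{t(\beta-2L_1^2h)}$. I will derive a recursion for $D_k$. Subtracting \eqref{eq:ZOGD_avg1} from \eqref{eq:ZOGD_alg} (with $t$ averaged directions) gives
\[
\tilde z_{k+1}-\bar z_{k+1}=(\tilde z_k-\bar z_k)-h\bigl(\nabla f_\mu(\tilde z_k)-E[\nabla f_\mu(\tilde z_k)]\bigr)-h\xi_k,
\]
where $\xi_k=g_\mu(\tilde z_k)-\nabla f_\mu(\tilde z_k)$. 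Conditionally on $U_k$, $\xi_k$ has zero mean and $E\|\xi_k\|^2\le\sigma^2/t$ by Assumption~\ref{lm20}. Hence the cross terms with $\xi_k$ vanish, and
\[
D_{k+1}=E\bigl\|e_k-h(\nabla f_\mu(\tilde z_k)-E\nabla f_\mu(\tilde z_k))\bigr\|^2+h^2E\|\xi_k\|^2,\qquad e_k:=\tilde z_k-\bar z_k .
\]

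\emph{Main obstacle.} The hard part is extracting contraction from the first term, because the centring uses $E[\nabla f_\mu(\tilde z_k)]$ rather than $\nabla f_\mu(\bar z_k)$. I will use the fact that centring minimises the second moment: for any constant $c$, $E\|Y-EY\|^2\le E\|Y-c\|^2$. Taking $Y=\tilde z_k-h\nabla f_\mu(\tilde z_k)$ and $c=\bar z_k-h\nabla f_\mu(\bar z_k)$ reduces the first term to
\[
E\bigl\|e_k-h(\nabla f_\mu(\tilde z_k)-\nabla f_\mu(\bar z_k))\bigr\|^2 .
\]
Since $f_\mu$ is $\beta$-strongly convex and $L_1$-smooth, expanding the square and using the crude bound $\|\nabla f_\mu(x)-\nabla f_\mu(y)\|^2\le L_1^2\|x-y\|^2$ gives the factor $1-2h\beta+h^2L_1^2$. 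This yields $D_{k+1}\le(1-2h\beta+h^2L_1^2)D_k+h^2\sigma^2/t$.

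This does not reproduce the stated constant $\beta-2L_1^2h$ exactly, so I expect the authors used a Young-type splitting that produces a factor $1-h(\beta-2L_1^2h)$. I will try to match that form, for instance by bounding $\|a-b\|^2\le 2\|a\|^2+2\|b\|^2$ on the gradient difference together with strong convexity $\langle e,\nabla f_\mu(\tilde z)-\nabla f_\mu(\bar z)\rangle\ge\beta\|e\|^2$. In either case the contraction factor is in $(0,1)$ whenever $h<\beta/(2L_1^2)$.

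\emph{Conclusion.} Starting from $D_0=0$, since $\tilde z_0$ is deterministic, unrolling the geometric recursion gives
\[
D_k\le \frac{h^2\sigma^2/t}{h(\beta-2L_1^2h)}
\]
for all $k$; any slack (the factor $2$) is absorbed by the Young splitting. Taking the square root, multiplying by $hL_1$, and adding the first-term bound gives \eqref{eq:q_bound_GDsc}.
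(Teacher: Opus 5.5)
Your argument is correct, but at the step you flagged as the main obstacle it takes a different route from the paper.

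The paper keeps $l_k=\nabla f_\mu(\tilde z_k)-E[\nabla f_\mu(\tilde z_k)]$ as it is. It does three things:
\begin{itemize}
\item It uses $E[e_k]=0$ to rewrite $E\langle e_k,l_k\rangle=E\langle e_k,\nabla f_\mu(\tilde z_k)\rangle$.
\item It applies the function-value form of strong convexity of $f_\mu$ between $\tilde z_k$ and $\bar z_k$, plus Jensen's inequality $E[f_\mu(\tilde z_k)]\ge f_\mu(\bar z_k)$, to get $E\langle e_k,l_k\rangle\ge\frac{\beta}{2}D_k$.
\item It bounds $E\|v_k+l_k\|^2\le 2\sigma^2/t+2L_1^2D_k$ by a crude split, using the centring inequality only for $E\|l_k\|^2\le L_1^2D_k$.
\end{itemize}
This produces $\rho=1-h\beta+2L_1^2h^2$ and the stated constant exactly.

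You instead use two sharper ingredients. First, the noise is orthogonal to everything determined by the past, so it contributes $h^2\sigma^2/t$ rather than $2h^2\sigma^2/t$. Second, you apply the centring inequality to the whole update $\tilde z_k-h\nabla f_\mu(\tilde z_k)$. That lets you compare with $\nabla f_\mu(\bar z_k)$ and use strong monotonicity with the full constant $\beta$.

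Your recursion therefore dominates the paper's. You do not need to reproduce the authors' form or appeal to slack being ``absorbed by the Young splitting.'' The explicit comparison $1-2h\beta+h^2L_1^2\le 1-h\beta+2h^2L_1^2$ gives
\[
\sup_k D_k\le \frac{h\sigma^2}{t(2\beta-hL_1^2)}\le\frac{2h\sigma^2}{t(\beta-2L_1^2h)},
\]
which is all the lemma requires.

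Your route buys three things:
\begin{itemize}
\item sharper constants;
\item a wider admissible step range, $h<2\beta/L_1^2$ rather than $h<\beta/(2L_1^2)$;
\item structurally, a reduction of the error dynamics to one-step contractivity of the deterministic map $x\mapsto x-h\nabla f_\mu(x)$, so any GD contraction estimate can be plugged in.
\end{itemize}
For example, co-coercivity gives the factor $1-\frac{2h\beta L_1}{\beta+L_1}$ for all $h\le\frac{2}{\beta+L_1}$. That is the same range and rate the paper uses for FO GD in the proof of Theorem~\ref{th:GDsc}.

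The paper's route reproduces its stated constants. It also sets up a template that it reuses for HB and NAG: bound $E\langle\cdot,l_k\rangle$ via strong convexity plus Jensen, and bound $E\|l_k\|^2$ separately.
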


Lemma~\ref{lem:q_GDsc} shows that the averaged ZO-GD iterates satisfy a perturbed version of the GD dynamics, where the perturbations are bounded and their magnitude can be manipulated through appropriate parameter selection. It is known that, for a smooth strongly convex objective, GD converges to the minimiser exponentially fast~\cite{nesterov2018lectures}. In~\cite{nesterov_random_2017}, ZO-GD was analysed under the same setting, and it was shown that the iteration complexity carries an extra dimension-dependent factor for any choice of parameters. The following theorem shows that, instead, the dimension affects only the size of the convergence neighbourhood, and there are parameter choices such that ZO-GD closely follows GD.

\begin{theorem}\label{th:GDsc}
Consider~\eqref{eq:GD_alg} and~\eqref{eq:ZOGD_avg}. Let $f$ be $\beta$-strongly convex with Lipschitz gradients with constant $L_1>0$, 
let $h\in\bigl(0,\frac{\beta}{2L_1^2}\bigr)$ 
and for $r>0$ let $\mathcal{B}_r(z^e)$ denote a convergence neighbourhood around the fixed point $z^e$ of GD.
Then there exist constants $\bar{\mu}, \bar{q} > 0$ and $\bar{t}\in\mathbb{N}$ (depending on $h$, $\beta$, $L_1$, and the Lyapunov function of the GD dynamics) such that, for any $\mu \in (0, \bar{\mu})$ and $t \geq \bar{t}$ satisfying $\sup_k \|q_k\| \leq \bar{q}$ via the bound in Lemma~\ref{lem:q_GDsc}, ZO-GD has the same exponential decay rate as GD  
converges to $\mathcal{B}_r(z^e)$, and 
there exists a class-$\mathcal{K}$ function $\alpha$ such that
\begin{align}\label{eq:r_GDsc}
r = \alpha\left( h \frac{\mu}{2}L_1(n+3)^{3/2}+h L_1 \sqrt{\frac{2h\sigma^2}{t(\beta-2L_1^2h)}} \right).
\end{align}
\end{theorem}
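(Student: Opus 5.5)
The argument combines Lemma~\ref{lem:q_GDsc} with Theorem~\ref{thm:ISS_exp}. The averaged iterate $\bar z_k$ in~\eqref{eq:ZOGD_avg} is exactly the perturbed system~\eqref{eq:dt_perturbed} with $w(z)=z-h\nabla f(z)$ and additive input $q_k$ given by~\eqref{eq:gen_q_ZOGD}. The fixed point of $w$ is the unique minimiser $z^e=x^\ast$ of the strongly convex $f$. It therefore suffices to exhibit a quadratic Lyapunov function for GD that satisfies~\eqref{eq:sandwich}--\eqref{eq:decrease}, and then to plug in the perturbation bound~\eqref{eq:q_bound_GDsc}.

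\textbf{Lyapunov function.} Take $V(z)=\|z-z^e\|^2$, so $c_1=c_2=1$ and~\eqref{eq:sandwich} holds globally. Since $\nabla f(z^e)=0$, we can expand
\[
\|w(z)-z^e\|^2=\|z-z^e\|^2-2h\langle\nabla f(z),z-z^e\rangle+h^2\|\nabla f(z)\|^2 .
\]
Strong convexity gives $\langle \nabla f(z),z-z^e\rangle\ge\beta\|z-z^e\|^2$, and the Lipschitz gradient gives $\|\nabla f(z)\|\le L_1\|z-z^e\|$. Hence $V(w(z))\le(1-2h\beta+h^2L_1^2)V(z)$. Set $c=2h\beta-h^2L_1^2$. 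The step-size condition $h<\beta/(2L_1^2)$ implies $hL_1^2<\beta/2$, so $c\ge 2h\beta-h\beta/2>0$. Also $c<1$, because $1-c=(1-h\beta)^2+h^2(L_1^2-\beta^2)\ge(1-h\beta)^2>0$ using $\beta\le L_1$ and $h\beta<1/2$. Thus~\eqref{eq:decrease} holds with $c\in(0,1)$ on any ball $\mathcal B_R(z^e)$, in fact globally.

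\textbf{Applying Theorem~\ref{thm:ISS_exp}.} The theorem yields $\bar r,\bar q>0$ and $\gamma\in\mathcal K$, all depending only on $V$, $c$, and hence on $h,\beta,L_1$. If $\sup_k\|q_k\|\le\bar q$ and $\bar z_0\in\mathcal B_{\bar r}(z^e)$, then
\[
\|\bar z_k-z^e\|\le\|\bar z_0-z^e\|(1-c)^{k/2}+\gamma\bigl(\sup_j\|q_j\|\bigr).
\]
This is the same exponential rate $(1-c)^{1/2}$ that unperturbed GD enjoys under $V$. It remains to guarantee $\sup_k\|q_k\|\le\bar q$. The bound~\eqref{eq:q_bound_GDsc} is the sum $h\frac{\mu}{2}L_1(n+3)^{3/2}+hL_1\sqrt{2h\sigma^2/(t(\beta-2L_1^2h))}$. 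The first term is increasing in $\mu$ and the second is decreasing in $t$. Choose $\bar\mu$ so that the first term is at most $\bar q/2$ for $\mu<\bar\mu$, and $\bar t$ so that the second is at most $\bar q/2$ for $t\ge\bar t$. Since $\gamma$ is increasing, $\gamma(\sup_j\|q_j\|)\le\gamma(B)$, where $B$ denotes the right-hand side of~\eqref{eq:q_bound_GDsc}. Letting $k\to\infty$ shows $\limsup_k\|\bar z_k-z^e\|\le\gamma(B)$, so the claim follows with $\alpha=\gamma$, or with $\alpha=\gamma+\epsilon\,\mathrm{Id}$ if one wants eventual entry into a closed ball rather than a limsup statement.

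\textbf{Main obstacle.} The delicate point is that Lemma~\ref{lem:q_GDsc} must be valid along the whole trajectory, so its bound on $q_k$ should not presuppose that $\bar z_k$ stays in the ball where Theorem~\ref{thm:ISS_exp} applies. Because $V$ decreases globally here, $R$ can be taken arbitrarily large, which removes any circularity. Otherwise one would argue by induction: the ISS estimate keeps $\bar z_k$ inside $\mathcal B_{\bar r+\gamma(\bar q)}(z^e)\subset\mathcal B_R(z^e)$, the lemma's bound then holds at step $k$, and the induction continues. One must also check that $\bar q$ is small enough that $\gamma(\bar q)$ plus the transient term never leaves $\mathcal B_R$, which is part of how $\bar r,\bar q$ are produced in Theorem~\ref{thm:ISS_exp}.
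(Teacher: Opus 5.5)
Your proposal is correct and follows the paper's proof. Lemma~\ref{lem:q_GDsc} supplies the uniform perturbation bound, and Theorem~\ref{thm:ISS_exp} is applied to GD with $V(z)=\|z-z^e\|^2$, $c_1=c_2=1$, with $\bar\mu$ and $\bar t$ chosen to push the bound below $\bar q$. The one difference is that you derive the contraction constant $c=2h\beta-h^2L_1^2$ directly from strong monotonicity and the Lipschitz gradient. The paper instead cites Nesterov's Thm.~2.1.15 for $c=\frac{2h\beta L_1}{\beta+L_1}$, which applies here since $\frac{\beta}{2L_1^2}<\frac{2}{\beta+L_1}$; both constants lie in $(0,1)$ on the stated step-size range, so the rest of the argument is unchanged.
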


\subsection{Heavy ball method and Nesterov's accelerated gradient under strong convexity}\label{sec:HBNAGsc}

In this section, we consider both the heavy ball (HB) method and Nesterov's accelerated gradient (NAG) under strong convexity. The two algorithms share the same augmented state-space structure and differ only in the point at which the gradient is evaluated, so we treat them in a unified framework.

The HB and NAG update rules are, respectively,
\begin{align}
x_{k+1} &= x_k + h_2(x_k - x_{k-1}) - h_1 \nabla f(x_k) , \label{eq:heavyball-update}\\
x_{k+1} &= x_k + h_2(x_k - x_{k-1}) - h_1 \nabla f\bigl(x_k + h_2(x_k - x_{k-1})\bigr), \label{eq:nag-update}
\end{align}
and their ZO counterparts (ZO-HB, ZO-NAG) replace $\nabla f$ with $g_\mu$ defined in~\eqref{eq:eq23}.
Define the augmented state vector $z_k = \begin{bmatrix} x_{k-1}^\top & x_k^\top \end{bmatrix}^\top \in \mathbb{R}^{2n}$, the state transition matrix
\[
A = \begin{bmatrix} 0 & I_n \\ -h_2 I_n & (1+h_2) I_n \end{bmatrix},
\]
and the output map $T = [0 \;\; I_n] \in \mathbb{R}^{n \times 2n}$, so that $x_k = Tz_k$.
Note that $TAz_k = x_k + h_2(x_k - x_{k-1})$ is the lookahead point used by NAG.
Both algorithms can be written compactly as
\begin{equation}\label{eq:unified_update}
z_{k+1} = A z_k + \begin{bmatrix} 0 \\ -h_1 \nabla f(P z_k) \end{bmatrix},
\end{equation}
where $P = T$ for HB and $P = TA$ for NAG.
The ZO counterpart replaces $\nabla f(Pz_k)$ with $g_\mu(Pz_k)$.

Following the same steps as in Section~\ref{sec:GDsc}, conditioning on $z_k$, taking expectations, and adding and subtracting the gradient evaluated at the averaged state, we obtain the averaged dynamics
\begin{equation}\label{eq:avg_HBNAG_perturbed}
\bar{z}_{k+1} = A \bar{z}_k + \begin{bmatrix} 0 \\ -h_1 \nabla f(P\bar{z}_k) \end{bmatrix} + q_k,
\end{equation}
where the perturbation term is
\begin{equation}\label{eq:gen_q_unified}
q_k = \begin{bmatrix} 0 \\ -h_1 \Bigl( E_{U_k}[\nabla f_\mu(Pz_k)] - E_{U_k}[\nabla f(Pz_k)] + E_{U_k}[\nabla f(Pz_k)] - \nabla f(P\bar{z}_k) \Bigr) \end{bmatrix}.
\end{equation}
Equation~\eqref{eq:avg_HBNAG_perturbed} shows that the averaged trajectory follows the FO dynamical system (HB or NAG, depending on the choice of $P$) perturbed by $q_k$.

\begin{lemma}\label{lem:q_HBsc}
Consider~\eqref{eq:unified_update}--\eqref{eq:gen_q_unified} with $P=T$ (HB). Let $f$ be $\beta$-strongly convex with $L_1$-Lipschitz gradients, and let $h_1, h_2 > 0$ satisfy
\begin{equation}\label{eq:rho_HB_cond}
\rho_{\mathrm{HB}} := (1+2h_2)^2 - h_1(1+h_2)\beta + 4h_1 h_2 L_1 + 2h_1^2 L_1^2 < 1.
\end{equation}
Then
\begin{equation}\label{eq:q_bound_HBsc}
\sup_{k} \|q_k\| \leq \frac{h_1 \mu L_1(n+3)^{3/2}}{2} + \frac{h_1^2 L_1 \sigma}{\sqrt{t}} \sqrt{\frac{2}{1-\rho_{\mathrm{HB}}}}\,.
\end{equation}
A sufficient condition for~\eqref{eq:rho_HB_cond} is $h_2 \in (0, ch_1)$ and $h_1 \in \bigl(0, \frac{\beta - 4c}{4c^2 + 4cL_1 + 2L_1^2}\bigr)$ with $c \in \bigl(0, \frac{\beta}{4}\bigr)$.
\end{lemma}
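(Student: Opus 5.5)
We split $q_k$ in \eqref{eq:gen_q_unified} into a smoothing bias $E_{U_k}[\nabla f_\mu(Pz_k)-\nabla f(Pz_k)]$ and a Jensen-type gap $E_{U_k}[\nabla f(Pz_k)]-\nabla f(P\bar z_k)$, where $z_k$ denotes the random ZO iterate and $\bar z_k=E_{U_k}[z_k]$. Each part is bounded uniformly in $k$ and then multiplied by $h_1$ (note $\|Tz\|\le\|z\|$ and the block structure only scales by $h_1$).

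\emph{Bias term.} We invoke the standard Gaussian-smoothing estimate of \cite{nesterov_random_2017} for $L_1$-smooth $f$: $\|\nabla f_\mu(x)-\nabla f(x)\|\le \frac{\mu}{2}L_1(n+3)^{3/2}$ for all $x$. Applied pointwise at $x=Pz_k$ and pulled through the expectation by Jensen, it gives the first term $\frac{h_1\mu L_1(n+3)^{3/2}}{2}$, exactly as in Lemma~\ref{lem:q_GDsc}.

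\emph{Gap term.} By $L_1$-Lipschitzness of $\nabla f$, Jensen and Cauchy--Schwarz,
\[
\|E[\nabla f(Pz_k)]-\nabla f(P\bar z_k)\|\le L_1\,E\|T(z_k-\bar z_k)\|\le L_1\sqrt{E\|e_k\|^2},
\]
with $e_k:=z_k-\bar z_k$. It therefore suffices to show $\sup_k E\|e_k\|^2\le \frac{2h_1^2\sigma^2}{t(1-\rho_{\mathrm{HB}})}$. Subtracting the averaged recursion from the random one, we obtain
\[
e_{k+1}=Ae_k+\begin{bmatrix}0\\ -h_1\bigl(\nabla f_\mu(Tz_k)-E[\nabla f_\mu(Tz_k)]\bigr)\end{bmatrix}+\begin{bmatrix}0\\ -h_1\xi_k\end{bmatrix},
\]
where $\xi_k=g_\mu-\nabla f_\mu$ at $Tz_k$ with averaged directions. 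Conditionally on the past $\xi_k$ has zero mean, so the cross term vanishes and by Assumption~\ref{lm20} it contributes $h_1^2\sigma^2/t$. For the deterministic part we use $\|a+b\|^2\le 2\|a\|^2+2\|b\|^2$ where needed (the source of the factor $2$), expand the second block $(1+h_2)x-h_2x' -h_1(\ldots)$, and use strong convexity ($\langle\nabla f(x)-\nabla f(y),x-y\rangle\ge\beta\|x-y\|^2$, transferred to $f_\mu$, which inherits $\beta$ and $L_1$) to handle the inner product term $-2h_1(1+h_2)\langle\cdot,\cdot\rangle$. Young's inequality on the mixed momentum/gradient products then generates $4h_1h_2L_1$, and the $L_1^2$-term gives $2h_1^2L_1^2$. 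The target is a one-step contraction
\[
E\|e_{k+1}\|^2\le\rho_{\mathrm{HB}}\,E\|e_k\|^2+\frac{2h_1^2\sigma^2}{t}.
\]
Since $e_0=0$, unrolling this geometric recursion gives $\sup_k E\|e_k\|^2\le \frac{2h_1^2\sigma^2}{t(1-\rho_{\mathrm{HB}})}$. Taking square roots and multiplying by $h_1L_1$ yields the second term of \eqref{eq:q_bound_HBsc}.

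\emph{Main obstacle.} The hard step is obtaining this contraction with exactly $\rho_{\mathrm{HB}}$. The centred quantity $\nabla f_\mu(Tz_k)-E[\nabla f_\mu(Tz_k)]$ is not a difference of gradients at two points, so strong monotonicity does not apply to it directly. We would first try to compare it with $\nabla f_\mu(Tz_k)-\nabla f_\mu(T\bar z_k)$ via the variance-minimising property of the mean: $E\|Y-EY\|^2\le E\|Y-c\|^2$ for any constant $c$. That reduces the problem to bounding $E\|Ae_k-h_1[0;\nabla f_\mu(Tz_k)-\nabla f_\mu(T\bar z_k)]\|^2$. The crude bound $\|Ae\|^2\le(1+2h_2)^2\|e\|^2$ (from $\|A\|\le 1+2h_2$) explains the leading term of $\rho_{\mathrm{HB}}$. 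The strong-convexity cross term, applied only to the $x_k$ component, produces $-h_1(1+h_2)\beta$ after halving, with the factor of $2$ moved into the noise term.

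\emph{Sufficient condition.} We verify it by substitution. With $h_2<ch_1$, bound $(1+2h_2)^2\le 1+4ch_1+4c^2h_1^2$, and use $-(1+h_2)\beta\le-\beta$ and $4h_1h_2L_1\le 4ch_1^2L_1$. Then $\rho_{\mathrm{HB}}<1$ holds if $h_1\bigl(4c^2+4cL_1+2L_1^2\bigr)<\beta-4c$, which is exactly the stated range, nonempty because $c<\beta/4$.
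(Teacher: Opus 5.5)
\textbf{There is a genuine gap: the contraction step fails as proposed.} Your bias term, the reduction of the gap term to $L_1E\|Te_k\|$, the variance-minimising comparison $E\|Y-EY\|^2\le E\|Y-c\|^2$, and the check of the sufficient condition are all fine. The step that fails is the one you flag as the main obstacle: a one-step contraction $E\|e_{k+1}\|^2\le\rho_{\mathrm{HB}}E\|e_k\|^2+\frac{2h_1^2\sigma^2}{t}$ for the full augmented error in $\mathbb{R}^{2n}$.

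First, the bound $\|A\|\le 1+2h_2$ is false. For $e=[0;x]$ one has $\|Ae\|^2=(1+(1+h_2)^2)\|x\|^2$, and this exceeds $(1+2h_2)^2\|x\|^2$ whenever $h_2<1/3$. Since $L_1\ge\beta$, the hypothesis $\rho_{\mathrm{HB}}<1$ itself forces $h_2<1/3$, so the bound fails on the whole admissible range.

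More fundamentally, the unweighted augmented norm is the wrong quantity. Put $\Delta_k:=\nabla f_\mu(Tz_k)-\nabla f_\mu(T\bar z_k)$. The top block of $Ae_k-h_1[0;\Delta_k]$ is just $Te_k$, which the gradient does not touch. So whenever the top block of $e_k$ vanishes (e.g.\ $e_1$, because $e_0=0$), you get $\|Ae_k-h_1[0;\Delta_k]\|^2\ge\|e_k\|^2$. The shift component carries no contraction, so the deterministic estimate $E\|Ae_k-h_1[0;\Delta_k]\|^2\le\rho_{\mathrm{HB}}E\|e_k\|^2$ that your argument needs is false.

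\textbf{The missing idea is to track only the lower block.} Work with the scalar $a_k=E\|Te_k\|^2$, which is all that enters $q_k$, and accept a \emph{two-step} recursion. From $x^e_{k+1}=(1+h_2)x^e_k-h_2x^e_{k-1}-h_1(v_k+l_k)$ you collect the following estimates:
\begin{itemize}
\item Young's inequality gives $\|(1+h_2)x^e_k-h_2x^e_{k-1}\|^2\le(1+h_2)(1+2h_2)\|x^e_k\|^2+h_2(1+2h_2)\|x^e_{k-1}\|^2$.
\item Strong convexity of $f_\mu$ plus Jensen give $E\langle x^e_k,l_k\rangle\ge\frac{\beta}{2}a_k$.
\item The momentum cross term satisfies $|E\langle x^e_{k-1},l_k\rangle|\le L_1(a_{k-1}+a_k)$.
\item The noise satisfies $E\|v_k+l_k\|^2\le\frac{2\sigma^2}{t}+2L_1^2a_k$.
\end{itemize}
Together these yield $a_{k+1}\le\phi a_k+c_0a_{k-1}+\frac{2h_1^2\sigma^2}{t}$ with $\phi=(1+h_2)(1+2h_2)-h_1(1+h_2)\beta+2h_1h_2L_1+2h_1^2L_1^2$ and $c_0=h_2(1+2h_2)+2h_1h_2L_1$.

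Passing to $m_k=\max\{a_k,a_{k-1}\}$ and inducting from $m_0=0$ gives $\sup_k m_k\le\frac{2h_1^2\sigma^2}{t(1-\rho_{\mathrm{HB}})}$, because $\phi+c_0=\rho_{\mathrm{HB}}$. So the leading $(1+2h_2)^2$ in $\rho_{\mathrm{HB}}$ is $(1+h_2)(1+2h_2)+h_2(1+2h_2)$, not $\|A\|^2$.

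Your variance-minimising trick slots into this scalar argument if applied to the lower block alone. There it even gives the sharper $-2h_1(1+h_2)\beta a_k$ and a noise term $h_1^2\sigma^2/t$. But without switching to $a_k$ and closing with the max argument, the proof as proposed does not go through.
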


\begin{lemma}\label{lem:q_NAGsc}
Consider~\eqref{eq:unified_update}--\eqref{eq:gen_q_unified} with $P=TA$ (NAG). Let $f$ be $\beta$-strongly convex with $L_1$-Lipschitz gradients, and let $h_1, h_2 > 0$ satisfy
\begin{equation}\label{eq:rho_NAG_cond}
\rho_{\mathrm{NAG}} := (1 - h_1\beta + 2h_1^2 L_1^2)(1+2h_2)^2 < 1.
\end{equation}
Then
\begin{equation}\label{eq:q_bound_NAGsc}
\sup_{k} \|q_k\| \leq \frac{h_1 \mu L_1(n+3)^{3/2}}{2} + \frac{(1+2h_2) h_1^2 L_1 \sigma}{\sqrt{t}} \sqrt{\frac{2}{1-\rho_{\mathrm{NAG}}}}\,.
\end{equation}
A sufficient condition for~\eqref{eq:rho_NAG_cond} is $h_1 \in \bigl(0, \frac{\beta}{2L_1^2}\bigr)$ and $h_2 \in \bigl(0, \frac{1}{2}\bigl(\frac{1}{\sqrt{1-h_1\beta+2h_1^2 L_1^2}} - 1\bigr)\bigr)$.
\end{lemma}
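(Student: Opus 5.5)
We split $q_k$ in \eqref{eq:gen_q_unified} (with $P=TA$) into a smoothing bias and a stochastic-spread term:
\[
q_k^{(1)} = -h_1 E_{U_k}\bigl[\nabla f_\mu(Pz_k)-\nabla f(Pz_k)\bigr], \qquad q_k^{(2)} = -h_1\bigl(E_{U_k}[\nabla f(Pz_k)]-\nabla f(P\bar z_k)\bigr),
\]
padded with a zero first block. For $q_k^{(1)}$ we use the standard Gaussian smoothing estimate of \cite{nesterov_random_2017}: $\|\nabla f_\mu(x)-\nabla f(x)\|\le \frac{\mu}{2}L_1(n+3)^{3/2}$ for every $x$. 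Jensen's inequality then gives $\|q_k^{(1)}\|\le \frac{h_1\mu L_1(n+3)^{3/2}}{2}$, which is the first term of \eqref{eq:q_bound_NAGsc}. This step is identical to the GD and HB cases.

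For $q_k^{(2)}$, $L_1$-Lipschitzness and Jensen give
\[
\|q_k^{(2)}\|\le h_1 L_1 E\|P(z_k-\bar z_k)\| \le h_1L_1\|P\|\,\bigl(E\|z_k-\bar z_k\|^2\bigr)^{1/2}.
\]
Here $\|P\|=\|TA\|$, and the lookahead $x_k+h_2(x_k-x_{k-1})$ has norm at most $(1+2h_2)\max(\|x_k\|,\|x_{k-1}\|)$. This is the source of the factor $(1+2h_2)$. So it remains to bound the variance $D_k:=E\|z_k-\bar z_k\|^2$ uniformly in $k$, in the form $D_k\le \frac{2h_1^2\sigma^2}{t(1-\rho_{\mathrm{NAG}})}$.

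To get this recursion, write the ZO-NAG update as $x_{k+1}=y_k-h_1 g_\mu(y_k)$ with $y_k=TAz_k$. Decompose $g_\mu=\nabla f(y_k)+(\nabla f_\mu-\nabla f)(y_k)+\xi_k$, where $\xi_k$ has zero conditional mean and conditional second moment at most $\sigma^2/t$ by Assumption~\ref{lm20} with $t$-sample averaging. The deviation $x_{k+1}-\bar x_{k+1}$ then splits into three parts.

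The first is the gradient-step map $y\mapsto y-h_1\nabla f(y)$ applied at $y_k$ minus its mean. Strong convexity and smoothness make this map contractive in mean square with factor $1-h_1\beta+2h_1^2L_1^2$. Concretely, we expand $\|y-y'-h_1(\nabla f(y)-\nabla f(y'))\|^2$ with an independent copy, and use $\langle \nabla f(y)-\nabla f(y'),y-y'\rangle\ge\beta\|y-y'\|^2$. We absorb slack through a Young-type factor 2 on the $L_1^2$ term; this is also where the smoothing bias gets folded in or treated as deterministic.

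The second is the lookahead spread $E\|y_k-\bar y_k\|^2\le(1+2h_2)^2 D_k$. The third is the noise contribution $h_1^2\sigma^2/t$, with an extra factor 2 from Young's inequality, since the noise is uncorrelated with the $\mathcal F_k$-measurable part.

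Because the first block of $z_{k+1}$ is $x_k$, we track $D_k$ through $\max(E\|x_k-\bar x_k\|^2,E\|x_{k-1}-\bar x_{k-1}\|^2)$. This yields
\[
D_{k+1}\le \rho_{\mathrm{NAG}}D_k+\frac{2h_1^2\sigma^2}{t}.
\]
Since $D_0=0$ and $\rho_{\mathrm{NAG}}<1$, unrolling gives $D_k\le \frac{2h_1^2\sigma^2}{t(1-\rho_{\mathrm{NAG}})}$. Inserting this into the bound on $\|q_k^{(2)}\|$ gives $(1+2h_2)\frac{h_1^2L_1\sigma}{\sqrt t}\sqrt{\frac{2}{1-\rho_{\mathrm{NAG}}}}$.

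The main obstacle is this variance recursion. The bias term $\nabla f_\mu-\nabla f$ is not exactly deterministic across realizations, so its spread must either be bounded by a Lipschitz argument on $\nabla f_\mu$ or absorbed into the constants. The max-over-two-blocks bookkeeping must also produce exactly the factor $(1+2h_2)^2$, not $(1+h_2)^2+h_2^2$, so we will use the crude triangle bound.

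The sufficient condition follows directly. For $h_1<\beta/(2L_1^2)$ we have $0<1-h_1\beta+2h_1^2L_1^2<1$; we also need it positive, which holds since $\beta\le L_1$. Then $\rho_{\mathrm{NAG}}<1$ is equivalent to $(1+2h_2)<(1-h_1\beta+2h_1^2L_1^2)^{-1/2}$, which is exactly the stated range of $h_2$.
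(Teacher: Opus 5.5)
Your overall skeleton matches the paper's proof: the bias/spread split, a recursion for $a_k=E\|x_k^e\|^2$ (with $x_k^e:=x_k-\bar x_k$) through the lookahead spread $b_k=E\|y_k-\bar y_k\|^2$, the max-over-two-blocks bookkeeping, and unrolling from $e_0=0$. But there is a gap at exactly the step you flag as the main obstacle, and neither remedy you offer closes it.

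You run the mean-square contraction on $\Phi(y)=y-h_1\nabla f(y)$ and carry the bias $B(y)=\nabla f_\mu(y)-\nabla f(y)$ separately. Since $y_k$ is random, $B(y_k)$ is not deterministic. A Lipschitz bound on its spread is also too weak. Expanding $\|\Phi(y)-\Phi(y')-h_1(B(y)-B(y'))\|^2$ using only $\|B(y)-B(y')\|\le 2L_1\|y-y'\|$ leaves a cross term of order $4h_1L_1\|y-y'\|^2$. The only slack between your pairwise factor $1-2h_1\beta+h_1^2L_1^2$ and $\psi:=1-h_1\beta+2h_1^2L_1^2$ is $h_1\beta+h_1^2L_1^2$, and $\beta\le L_1$. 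In fact, Lipschitz information on $B$ alone cannot even exclude expansion: $\nabla f(y)=\beta y$, $B(y)=-(\beta+L_1)y$ gives $\Phi(y)-h_1B(y)=(1+h_1L_1)y$.

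The missing idea is to never split off $\nabla f$ inside the error recursion. Write $g_\mu(y_k)=\nabla f_\mu(y_k)+v_k$ with $v_k$ conditionally mean-zero. Then apply your independent-copy argument to $\Phi_\mu(y)=y-h_1\nabla f_\mu(y)$, using that $f_\mu$ is itself $\beta$-strongly convex (Lemma~\ref{lm:fmu_sc}) with $L_1$-Lipschitz gradient. With $y_k'$ an independent copy of $y_k$,
\[
a_{k+1}=\tfrac12 E\|\Phi_\mu(y_k)-\Phi_\mu(y_k')\|^2+h_1^2E\|v_k\|^2\le\bigl(1-2h_1\beta+h_1^2L_1^2\bigr)b_k+\frac{h_1^2\sigma^2}{t},
\]
which is dominated by $\psi b_k+2h_1^2\sigma^2/t$. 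The bias now enters only through $q_k^{(1)}$.

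Repaired this way, your route is a genuine alternative to the paper's. The paper also works entirely with $f_\mu$ (its $v_k,l_k$ are defined via $\nabla f_\mu$). It uses the one-sided estimate $E\langle y_k-\bar y_k,\nabla f_\mu(y_k)\rangle\ge\frac{\beta}{2}b_k$, obtained from strong convexity of $f_\mu$ plus Jensen ($Ef_\mu(y_k)\ge f_\mu(\bar y_k)$), together with $\|v_k+l_k\|^2\le2\|v_k\|^2+2\|l_k\|^2$. That is exactly where $\psi$ and the factor $2$ on the noise come from. Your pairwise version is slightly sharper and implies the stated bound.

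Two bookkeeping points also matter for landing on the stated constant.

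First, bounding $E\|TAe_k\|$ by $\|TA\|\,(E\|z_k-\bar z_k\|^2)^{1/2}$ gives $\|TA\|^2(a_k+a_{k-1})$. This can be as large as $2(1+2h_2+2h_2^2)\max(a_k,a_{k-1})$, a spurious factor of up to $\sqrt2$. Likewise, the pointwise bound $\|y_k-\bar y_k\|\le(1+2h_2)\max(\|x_k^e\|,\|x_{k-1}^e\|)$ does not yield $(1+2h_2)^2\max(a_k,a_{k-1})$ after taking expectations, since $E\max\neq\max E$. Use instead the weighted Cauchy--Schwarz bound $\|y_k-\bar y_k\|^2\le(1+2h_2)\bigl((1+h_2)\|x_k^e\|^2+h_2\|x_{k-1}^e\|^2\bigr)$. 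It gives $b_k\le(1+2h_2)^2m_k$ with $m_k=\max(a_k,a_{k-1})$; use it both in the recursion and in the final estimate $E\|TAe_k\|\le\sqrt{b_k}$.

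Second, $m_{k+1}\le\rho_{\mathrm{NAG}}m_k+2h_1^2\sigma^2/t$ does not hold for arbitrary $m_k$. What induction from $m_0=0$ gives is the invariant $m_k\le 2h_1^2\sigma^2/(t(1-\rho_{\mathrm{NAG}}))$, which is all you need.

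Your derivation of the sufficient condition, including positivity of $1-h_1\beta+2h_1^2L_1^2$, is correct.
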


Both lemmas show that the averaged ZO iterates satisfy a perturbed version of their FO dynamics with bounded, controllable perturbations. The key structural difference is that, for NAG, the strong convexity contraction acts directly on the lookahead error, eliminating the momentum cross term that appears in the HB analysis. For HB, it is well known that the method locally converges exponentially fast~\cite{polyak1964some}; for NAG, global convergence with an accelerated rate is established in~\cite{nesterov2018lectures}. The following theorems provide a unified perspective that for both methods, the dimension dependence does not appear in the iteration complexity, instead, the convergence is to a neighbourhood of FO dynamics, where the size of the neighbourhood can be made arbitrarily small.

\begin{theorem}\label{th:HBsc}
Consider~\eqref{eq:unified_update} and~\eqref{eq:avg_HBNAG_perturbed} with $P=T$. Let $f$ be $\beta$-strongly convex with $L_1$-Lipschitz gradients, let the hypotheses of Lemma~\ref{lem:q_HBsc} be satisfied, and for $r > 0$ let $\mathcal{B}_r(z^e)$ denote a convergence neighbourhood around the fixed point $z^e$ of HB. Then there exist positive scalars $\bar{\mu}$ and $\bar{q}$, and $\bar{t} \in \mathbb{N}$ such that, for any $\mu \in (0, \bar{\mu})$ and $t \geq \bar{t}$ satisfying $\sup_k \|q_k\| \leq \bar{q}$ via the bound in Lemma~\ref{lem:q_HBsc}, ZO-HB has the same exponential decay rate as HB and converges to $\mathcal{B}_r(z^e)$, and there exists a class-$\mathcal{K}$ function $\alpha$ such that
\begin{equation}\label{eq:r_HBsc}
r = \alpha\Bigl(\frac{h_1 \mu L_1(n+3)^{3/2}}{2} + \frac{h_1^2 L_1 \sigma}{\sqrt{t}} \sqrt{\frac{2}{1-\rho_{\mathrm{HB}}}}\Bigr).
\end{equation}
\end{theorem}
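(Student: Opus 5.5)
We obtain Theorem~\ref{th:HBsc} by combining Lemma~\ref{lem:q_HBsc} with Theorem~\ref{thm:ISS_exp}, in the same way Theorem~\ref{th:GDsc} is obtained from Lemma~\ref{lem:q_GDsc}. By~\eqref{eq:avg_HBNAG_perturbed}, the averaged ZO-HB iterates $\bar z_k$ are exactly the solution of the perturbed system~\eqref{eq:dt_perturbed} with
\[
w(z)=Az+\begin{bmatrix}0\\ -h_1\nabla f(Tz)\end{bmatrix}
\]
and additive input $q_k$. The fixed point is $z^e=[x^{\ast\top}\; x^{\ast\top}]^\top$, where $x^\ast$ is the unique minimiser of $f$. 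This holds because $w(z^e)=Az^e=z^e$, using $\nabla f(x^\ast)=0$ and $A[x;x]=[x;x]$. So it suffices to verify the hypotheses~\eqref{eq:sandwich}--\eqref{eq:decrease} of Theorem~\ref{thm:ISS_exp} for the unperturbed HB map, and then to insert the bound~\eqref{eq:q_bound_HBsc}.

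\textbf{Step 1: quadratic Lyapunov function.} Take $V(z)=\|z-z^e\|^2$ for the unperturbed HB map, so that~\eqref{eq:sandwich} holds trivially with $c_1=c_2=1$ and any $R$. For the decrease condition, write $e_k=x_k-x^\ast$ and $e_{k-1}=x_{k-1}-x^\ast$. The HB step then reads
\[
e_{k+1}=(1+h_2)e_k-h_2e_{k-1}-h_1\bigl(\nabla f(x_k)-\nabla f(x^\ast)\bigr).
\]
Expand $\|e_{k+1}\|^2+\|e_k\|^2$ and bound each term:
\begin{itemize}
\item the inner product $-\langle e_k,\nabla f(x_k)-\nabla f(x^\ast)\rangle\le-\beta\|e_k\|^2$ by strong convexity;
\item $\|\nabla f(x_k)-\nabla f(x^\ast)\|\le L_1\|e_k\|$ by the Lipschitz gradient;
\item the cross terms via Young's inequality $2ab\le a^2+b^2$.
\end{itemize}
This should give $V(w(z))\le\rho_{\mathrm{HB}}V(z)$ with $\rho_{\mathrm{HB}}$ as in~\eqref{eq:rho_HB_cond}. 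That constant is presumably the same contraction factor used inside the proof of Lemma~\ref{lem:q_HBsc} to sum the geometric variance recursion giving the $\sqrt{2/(1-\rho_{\mathrm{HB}})}$ term, so I expect to reuse that computation directly. Hence~\eqref{eq:decrease} holds with $c=1-\rho_{\mathrm{HB}}\in(0,1)$, globally and so in particular on any $\mathcal{B}_R(z^e)$.

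\textbf{Step 2: apply Theorem~\ref{thm:ISS_exp}.} Theorem~\ref{thm:ISS_exp} yields $\bar r,\bar q>0$ and $\gamma\in\KK$ such that, whenever $\sup_k\|q_k\|\le\bar q$ and $\bar z_0\in\mathcal{B}_{\bar r}(z^e)$,
\[
\|\bar z_k-z^e\|\le\|\bar z_0-z^e\|\rho_{\mathrm{HB}}^{k/2}+\gamma\bigl(\sup_j\|q_j\|\bigr).
\]
The first term is exactly the exponential rate of FO HB under the same Lyapunov function, which is the "same exponential decay rate" claim.

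\textbf{Step 3: choose the parameters.} The right-hand side $Q(\mu,t)$ of~\eqref{eq:q_bound_HBsc} is increasing in $\mu$ and decreasing in $t$. It tends to $0$ as $\mu\to0$ and $t\to\infty$, with $h_1,h_2$ fixed. Choose $\bar\mu$ so that the first term is at most $\bar q/2$, and choose $\bar t$ so that the second term is at most $\bar q/2$. Then $\sup_k\|q_k\|\le Q(\mu,t)\le\bar q$ for all $\mu<\bar\mu$ and $t\ge\bar t$. Since $\gamma$ is increasing, the residual term satisfies $\gamma(\sup_j\|q_j\|)\le\gamma(Q(\mu,t))$. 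Taking $\alpha=\gamma$ gives $r=\alpha(Q)$, which is~\eqref{eq:r_HBsc}, and the trajectory enters $\mathcal{B}_r(z^e)$ in the limit.

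\textbf{Main obstacle.} The delicate point is Step 1. I must verify that the Young-inequality bookkeeping for the momentum cross terms $\langle e_k,e_{k-1}\rangle$ and $\langle e_{k-1},\nabla f\rangle$ reproduces exactly $\rho_{\mathrm{HB}}$, rather than some other constant. If a plain Euclidean $V$ does not close, I would use a weighted quadratic $V(z)=(z-z^e)^\top S(z-z^e)$. Any such positive definite $S$ still gives~\eqref{eq:sandwich} with $c_1=\lambda_{\min}(S)$ and $c_2=\lambda_{\max}(S)$. The only cost is the factor $\sqrt{c_2/c_1}$ in front of the decay term, and the rate $(1-c)^{k/2}$ is unchanged.
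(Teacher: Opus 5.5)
\textbf{Gap in Step 1.} Your overall architecture is the same as the paper's: identify $w$ and $z^e=[x^{\ast\top}\;x^{\ast\top}]^\top$, verify the hypotheses of Theorem~\ref{thm:ISS_exp} for unperturbed HB, insert the bound of Lemma~\ref{lem:q_HBsc}, then choose $\bar\mu,\bar t$. Steps 2--3 are fine. Step 1, however, is false as stated. The plain Euclidean $V(z)=\|z-z^e\|^2$ on the stacked state can never satisfy \eqref{eq:decrease} for HB, for any $c>0$, because the first block of $w(z)$ equals the second block of $z$. Take $z=[x^{\ast\top}\;(x^\ast+d)^\top]^\top$ with $d\neq 0$ arbitrarily small. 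Then $V(z)=\|d\|^2$, while $V(w(z))=\|d\|^2+\|x_{k+1}-x^\ast\|^2\ge V(z)$. So the decrease condition fails on every ball $\mathcal{B}_R(z^e)$.

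Reusing the computation from Lemma~\ref{lem:q_HBsc} does not rescue this. Run deterministically, with $\nabla f(x_k)-\nabla f(x^\ast)$ in place of $l_k$ (which even gives coefficientwise smaller constants), that bookkeeping yields only the two-step scalar recursion
\[
\|e_{k+1}\|^2\le \phi\|e_k\|^2+c_0\|e_{k-1}\|^2,\qquad \phi+c_0=\rho_{\mathrm{HB}}.
\]
This makes $\rho_{\mathrm{HB}}$ a \emph{two-step} contraction factor for $\max\{\|e_k\|^2,\|e_{k-1}\|^2\}$. It is not a one-step contraction factor for $\|e_k\|^2+\|e_{k-1}\|^2$.

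\textbf{The repair.} Your weighted-quadratic fallback is the right fix, but it has to be carried out, and it changes the rate. Take $V(z)=\|x_k-x^\ast\|^2+\epsilon\|x_{k-1}-x^\ast\|^2$. The recursion gives $V(w(z))\le(\phi+\epsilon)\|e_k\|^2+c_0\|e_{k-1}\|^2$, which is at most $\lambda V(z)$ provided $\phi+\epsilon\le\lambda$ and $c_0\le\lambda\epsilon$. Choosing $\lambda=\tfrac12\bigl(\phi+\sqrt{\phi^2+4c_0}\bigr)$ and $\epsilon=\lambda-\phi>0$ satisfies both conditions, and $\lambda<1$ exactly when $\rho_{\mathrm{HB}}<1$. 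The decrease then holds globally with $c=1-\lambda$, $c_1=\min\{1,\epsilon\}$, $c_2=\max\{1,\epsilon\}$, so Theorem~\ref{thm:ISS_exp} applies. However, $\lambda\in(\rho_{\mathrm{HB}},1)$, so the certified decay is $\lambda^{k/2}$ with prefactor $\sqrt{c_2/c_1}$, not $\rho_{\mathrm{HB}}^{k/2}$. Your remark that the rate is ``unchanged'' is therefore wrong. This still suffices for the theorem, since ``same rate as HB'' means the rate certified by the Lyapunov function of the FO dynamics.

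\textbf{Comparison with the paper.} The paper avoids this construction entirely. It cites exponential stability of HB \cite{polyak1964some} and the converse Lyapunov theorem \cite[Thm.~5.6]{kellett2023introduction} to obtain some $V$ satisfying \eqref{eq:sandwich}--\eqref{eq:decrease} on a ball, then argues as in Theorem~\ref{th:GDsc}. That route is shorter and generic but non-constructive. The repaired explicit route gives a global, explicit Lyapunov function, and hence computable $\bar r$, $\bar q$ and $\gamma$.
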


\begin{theorem}\label{th:NAGsc}
Consider~\eqref{eq:unified_update} and~\eqref{eq:avg_HBNAG_perturbed} with $P=TA$. Let $f$ be $\beta$-strongly convex with $L_1$-Lipschitz gradients, let the hypotheses of Lemma~\ref{lem:q_NAGsc} be satisfied, and for $r > 0$ let $\mathcal{B}_r(z^e)$ denote a convergence neighbourhood around the fixed point $z^e$ of NAG. Then there exist $\bar{\mu}, \bar{q} > 0$ and $\bar{t} \in \mathbb{N}$ such that, for any $\mu \in (0, \bar{\mu})$ and $t \geq \bar{t}$ satisfying $\sup_k E\|q_k\| \leq \bar{q}$ via the bound in Lemma~\ref{lem:q_NAGsc}, ZO-NAG has the same decay rate as NAG and converges to $\mathcal{B}_r(z^e)$, and there exists a class-$\mathcal{K}$ function $\alpha$ such that
\begin{equation}\label{eq:r_NAGsc}
r = \alpha\Bigl(\frac{h_1 \mu L_1(n+3)^{3/2}}{2} + \frac{(1+2h_2) h_1^2 L_1 \sigma}{\sqrt{t}} \sqrt{\frac{2}{1-\rho_{\mathrm{NAG}}}}\Bigr).
\end{equation}
\end{theorem}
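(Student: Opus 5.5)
The proof follows the template of Theorem~\ref{th:GDsc}: build a quadratic Lyapunov function for the unperturbed NAG dynamics \eqref{eq:unified_update} with $P=TA$, check the hypotheses of Theorem~\ref{thm:ISS_exp}, and combine the resulting ISS estimate with the perturbation bound of Lemma~\ref{lem:q_NAGsc}. By strong convexity $f$ has a unique minimiser $x^\ast$. The fixed point of \eqref{eq:unified_update} is then $z^e=[x^{\ast\top}\ x^{\ast\top}]^\top$, since $Az^e=z^e$, $TAz^e=x^\ast$ and $\nabla f(x^\ast)=0$. The averaged ZO-NAG iterates satisfy \eqref{eq:avg_HBNAG_perturbed}, which has exactly the additive form \eqref{eq:dt_perturbed} with $w(z)=Az+[0;\,-h_1\nabla f(TAz)]$.

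\textbf{Lyapunov function.} I will take $V(z)=\|z-z^e\|_{\mathcal P}^2$, where the weight matrix $\mathcal P\succ 0$ is chosen so that the same contraction factor $\rho_{\mathrm{NAG}}$ from Lemma~\ref{lem:q_NAGsc} appears.

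The natural first step is to use the lookahead error $e_k=TAz_k-x^\ast=(1+h_2)(x_k-x^\ast)-h_2(x_{k-1}-x^\ast)$. For it,
\[
\|e_k-h_1\nabla f(x^\ast+e_k)\|^2\le (1-h_1\beta+2h_1^2L_1^2)\|e_k\|^2 .
\]
This follows by expanding the square and using $\langle \nabla f(y)-\nabla f(x^\ast),y-x^\ast\rangle\ge \beta\|y-x^\ast\|^2$ together with $\|\nabla f(y)\|\le L_1\|y-x^\ast\|$, in the same way as in the proof of Lemma~\ref{lem:q_NAGsc}. Since $x_{k+1}-x^\ast=e_k-h_1\nabla f(TAz_k)$, the new state is $(x_k,x_{k+1})$ and $\|e_k\|\le(1+2h_2)\max(\|x_k-x^\ast\|,\|x_{k-1}-x^\ast\|)$, this yields a contraction for an appropriate max- or weighted norm. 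If the direct Euclidean norm does not contract, I will use a weighted quadratic such as
\[
V(z)=\|x_k-x^\ast\|^2+\theta\|x_{k-1}-x^\ast\|^2
\]
with $\theta\in(0,1)$ tuned using $\rho_{\mathrm{NAG}}<1$. The result is \eqref{eq:sandwich} with $c_1=\lambda_{\min}(\mathcal P)$ and $c_2=\lambda_{\max}(\mathcal P)$, and \eqref{eq:decrease} with some $c\in(0,1)$ related to $1-\rho_{\mathrm{NAG}}$. These hold globally, so any $R>0$ works.

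\textbf{Applying Theorem~\ref{thm:ISS_exp}.} The theorem gives $\bar r,\bar q$ and $\gamma\in\KK$ such that \eqref{eq:ISS_bound} holds with the decay factor $(1-c)^{k/2}$. This is the same rate as unperturbed NAG, because it comes from the same $V$.

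Next choose $\bar\mu$ and $\bar t$ so that the right-hand side of \eqref{eq:q_bound_NAGsc} is at most $\bar q$. For example, require each of its two terms to be at most $\bar q/2$:
\[
\bar\mu=\frac{\bar q}{h_1L_1(n+3)^{3/2}},\qquad \bar t\ge \Bigl(\frac{2(1+2h_2)h_1^2L_1\sigma}{\bar q}\sqrt{\tfrac{2}{1-\rho_{\mathrm{NAG}}}}\Bigr)^2 .
\]
Then, as $k\to\infty$, $\limsup_k\|\bar z_k-z^e\|\le\gamma(\sup_k\|q_k\|)$. Setting $\alpha=\gamma$ and inserting the bound of Lemma~\ref{lem:q_NAGsc} gives \eqref{eq:r_NAGsc}, because $\gamma$ is increasing. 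The dimension $n$ enters only through this radius and not through the decay factor.

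\textbf{Main obstacle.} The hardest part is the Lyapunov decrease \eqref{eq:decrease} for the augmented system. The contraction of Lemma~\ref{lem:q_NAGsc} acts on the lookahead variable $e_k$, not on the state $z_k$ itself, so I need a norm in which the map $z_k\mapsto z_{k+1}$ is a strict contraction with factor $1-c<1$ under $\rho_{\mathrm{NAG}}<1$.

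My first attempt is the max-norm argument above, squared and converted to a quadratic bound: work with $V(z)=\max(\|x_k-x^\ast\|^2,\|x_{k-1}-x^\ast\|^2)$. This $V$ is locally Lipschitz, which is all Theorem~\ref{thm:ISS_exp} requires. The decrease follows from $\|x_{k+1}-x^\ast\|^2\le\rho_{\mathrm{NAG}}\max(\cdot)$ together with the index shift, and may need to be checked over two steps. If the two-step version is needed, I will apply the ISS argument to the two-step map, which still inherits the exponential decay.
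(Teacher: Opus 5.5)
Your proposal is correct, but it obtains the Lyapunov hypothesis of Theorem~\ref{thm:ISS_exp} by a different route from the paper. The paper never constructs $V$. It cites exponential convergence of NAG, then invokes a converse Lyapunov theorem to get an abstract $V$ on some ball in $\mathbb{R}^{2n}$, and finishes as for GD. You instead rerun the contraction computation of Lemma~\ref{lem:q_NAGsc} on the deterministic error $z-z^e$, using $\nabla f(x^\ast)=0$, and build $V$ explicitly.

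Of your candidates, use the weighted quadratic. The max-type $V$ only gives $V(z_{k+1})\le V(z_k)$, because its second entry $\|x_k-x^\ast\|^2$ can equal $V(z_k)$. So \eqref{eq:decrease} fails in one step. The two-step repair also works, but you must rewrite $w(w(z)+q_k)+q_{k+1}$ in additive form using the global Lipschitz constant of $w$.

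For the weighted quadratic, set $\psi=1-h_1\beta+2h_1^2L_1^2$, $a=\psi(1+h_2)(1+2h_2)$ and $b=\psi h_2(1+2h_2)$, so that $a+b=\rho_{\mathrm{NAG}}<1$. Your lookahead contraction plus the Young bound give $V(w(z))\le (a+\theta)\|x_k-x^\ast\|^2+b\|x_{k-1}-x^\ast\|^2$. Any $\theta\in(b,1-a)$ then yields \eqref{eq:decrease} with $1-c=\max(a+\theta,b/\theta)<1$, together with $c_1=\theta$ and $c_2=1$, globally.

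\textbf{What your route buys.} You get explicit constants, hence explicit $\bar q$ and $\gamma$, and no locality restriction. More importantly, you get a genuine proof of exponential stability of NAG in the regime $\rho_{\mathrm{NAG}}<1$ that the theorem assumes. The paper's citation does not directly cover this regime: the standard accelerated tuning, e.g.\ $h_1=1/L_1$, gives $\psi=3-\beta/L_1>1$ and so violates $\rho_{\mathrm{NAG}}<1$.

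\textbf{What it costs.} The certified rate is the crude, non-accelerated factor $\max(a+\theta,b/\theta)$. At best this equals $\tfrac12\bigl(a+\sqrt{a^2+4b}\bigr)$, which exceeds $\rho_{\mathrm{NAG}}$. So ``same decay rate as NAG'' holds only in the sense of the rate your $V$ certifies for NAG. This is the same sense in which the paper's GD proof uses the textbook contraction factor.

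\textbf{What the paper's route buys.} The converse-Lyapunov argument is structure-agnostic: it transfers whatever exponential stability NAG is known to have. The price is that $V$, $c_1$, $c_2$ and $L_V$, and therefore $\bar q$ and $\gamma$, stay non-explicit.
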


\subsection{$L_2$ regularisation}\label{sec:L2reg}

In the preceding sections, we showed that strong convexity is sufficient to guarantee that the averaged ZO dynamics can be written as a bounded perturbation of the FO dynamics. When the objective $f$ is not strongly convex, one can introduce an $L_2$ regularisation term to induce the necessary contraction to recover the same perturbation framework, even when $f$ itself is not strongly convex.


We therefore consider, in place of~\eqref{eq:main}, the regularised problem
\begin{align}\label{eq:reg_main}
    \min_{x\in\mathbb{R}^n}\; f(x) + \lambda\|x\|^2,
\end{align}
where $\lambda>0$ is the regularisation parameter. We focus on the GD case; analogous results for HB and NAG are discussed in Remark~\ref{rem:reg_HBNAG}. Setting $\lambda = \frac{1-c}{2h}$, where $h>0$ is the step size and $c\in [0,1)$ (thus $\lambda$ can be arbitrary small but not larger than $\frac{1}{2h}$), the GD update~\eqref{eq:GD_alg} applied to~\eqref{eq:reg_main} becomes
\begin{align}\label{eq:GD_alg_reg}
    z_{k+1} = cz_k -h \nabla f(z_k),
\end{align}
the ZO-GD update~\eqref{eq:ZOGD_alg} becomes
\begin{align}\label{eq:ZOGD_alg_reg}
    \tilde{z}_{k+1} = c\tilde{z}_k -h g_\mu(\tilde{z}_k),
\end{align}
and the averaged ZO update is
\begin{align}\label{eq:ZOGD_avg_reg}
\bar{z}_{k+1} = c\bar{z}_k -h \nabla f(\bar{z}_k) + q_k,
\end{align}
where
\begin{align}\label{eq:gen_q_ZOGD_reg}
   q_k = h \bigl(\nabla f(\bar{z}_k) - E_{U_k}[\nabla f_\mu(\tilde{z}_k)]\bigr) = h \bigl(\nabla f(\bar{z}_k)- \nabla f_\mu(\bar{z}_k)\bigr) + h\bigl(\nabla f_\mu(\bar{z}_k) - E_{U_k}[\nabla f_\mu(\tilde{z}_k)]\bigr).
\end{align}

\begin{lemma}\label{lem:q_GDreg}
    Consider~\eqref{eq:GD_alg_reg},~\eqref{eq:ZOGD_avg_reg}, and~\eqref{eq:gen_q_ZOGD_reg}. Let $f$ be a possibly non-strongly-convex function with Lipschitz gradients with constant $L_1>0$, and let $h\in\bigl(0,\frac{\sqrt{2(1+c^2)}-2c}{2L_1}\bigr)$. Then
    \begin{align}\label{eq:q_bound_GD_reg}
\sup_k \|q_k\| \leq h \frac{\mu}{2}L_1(n+3)^{3/2}+h L_1 \sqrt{\frac{2h^2\sigma^2}{t(1-c^2-4cL_1h-2L_1^2h^2)}}.
\end{align}
\end{lemma}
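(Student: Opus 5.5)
The bound will follow from splitting $q_k$ in~\eqref{eq:gen_q_ZOGD_reg} into a smoothing bias term and a randomness term, and bounding each separately. The first term, $h(\nabla f(\bar z_k)-\nabla f_\mu(\bar z_k))$, is deterministic. By the standard estimate of~\cite{nesterov_random_2017} for $L_1$-smooth $f$, $\|\nabla f_\mu(x)-\nabla f(x)\|\le \frac{\mu}{2}L_1(n+3)^{3/2}$, which gives the first summand in~\eqref{eq:q_bound_GD_reg} uniformly in $k$. For the second term we use the Lipschitz continuity of $\nabla f_\mu$, which inherits constant $L_1$ from $f$, together with Jensen's inequality:
\[
\bigl\|\nabla f_\mu(\bar z_k)-E_{U_k}[\nabla f_\mu(\tilde z_k)]\bigr\|\le L_1 E_{U_k}\|\tilde z_k-\bar z_k\|\le L_1\sqrt{E_{U_k}\|\tilde z_k-\bar z_k\|^2}.
\]
The task therefore reduces to a uniform bound on the variance $D_k:=E\|\tilde z_k-\bar z_k\|^2$ of the ZO iterate around its mean. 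The target is $\sup_k D_k\le \frac{2h^2\sigma^2}{t(1-c^2-4cL_1h-2L_1^2h^2)}$.

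To get this, subtract~\eqref{eq:ZOGD_avg_reg} from~\eqref{eq:ZOGD_alg_reg}. With $e_k=\tilde z_k-\bar z_k$, this gives
\[
e_{k+1}=c\,e_k - h\bigl(g_\mu(\tilde z_k)-\nabla f_\mu(\tilde z_k)\bigr) - h\bigl(\nabla f_\mu(\tilde z_k)-E[\nabla f_\mu(\tilde z_k)]\bigr),
\]
where $E[\nabla f_\mu(\tilde z_k)]$ stands for $E_{U_k}[\nabla f_\mu(\tilde z_k)]$. Condition on $U_k$. The oracle noise $\xi_k=g_\mu(\tilde z_k)-\nabla f_\mu(\tilde z_k)$, averaged over $t$ directions, has zero conditional mean and second moment at most $\sigma^2/t$ by Assumption~\ref{lm20}, so its cross terms with $U_k$-measurable quantities vanish. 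The remaining deterministic part $c e_k - h\,\delta_k$, with $\delta_k=\nabla f_\mu(\tilde z_k)-E\nabla f_\mu(\tilde z_k)$, is expanded as
\[
c^2\|e_k\|^2+2ch\|e_k\|\|\delta_k\|+h^2\|\delta_k\|^2 .
\]
Using $E\|\delta_k\|^2\le E\|\nabla f_\mu(\tilde z_k)-\nabla f_\mu(\bar z_k)\|^2\le L_1^2 D_k$ (variance is at most the second moment about any point) and $E\|e_k\|\|\delta_k\|\le \sqrt{D_k}\sqrt{E\|\delta_k\|^2}$ would give the factor $(c+hL_1)^2$. The stated denominator, with $4cL_1h$ and $2L_1^2h^2$, indicates a cruder bookkeeping. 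One can split $\|a+b\|^2\le 2\|a\|^2+2\|b\|^2$ on the noise/gradient pieces, and bound the cross term as $2ch\|e_k\|(\|\nabla f_\mu(\tilde z_k)-\nabla f_\mu(\bar z_k)\|+\|\nabla f_\mu(\bar z_k)-E\nabla f_\mu(\tilde z_k)\|)\le 4chL_1 D_k$. Either route yields
\[
D_{k+1}\le \rho\, D_k + \frac{2h^2\sigma^2}{t},\qquad \rho:=c^2+4cL_1h+2L_1^2h^2 .
\]
I will follow whichever grouping reproduces $\rho$ exactly, since the weaker constant is also valid.

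The step-size condition $h<\frac{\sqrt{2(1+c^2)}-2c}{2L_1}$ is exactly the positive root condition for $2L_1^2h^2+4cL_1h+c^2-1<0$, so $\rho<1$. Since $D_0=0$ (deterministic initialisation), unrolling the recursion gives $D_k\le \frac{2h^2\sigma^2}{t(1-\rho)}$ for all $k$. Substituting into the Jensen bound, multiplying by $h$, and adding the bias term produces~\eqref{eq:q_bound_GD_reg}.

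The main obstacle is the variance recursion. Its cross and noise terms must be handled with care about conditioning: the oracle noise must be orthogonal given $\tilde z_k$, and the averaged mean $\bar z_k$ is deterministic. The constants must also come out exactly to $\rho$. Note that no strong convexity is needed here, because the contraction comes entirely from $c<1$, with $hL_1$ small enough not to destroy it.
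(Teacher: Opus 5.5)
Your proposal is correct and follows the paper's proof essentially step for step: the bias/randomness split of $q_k$, the Jensen reduction to $E\|e_k\|^2$, and the cross-term bound $2ch\,E[\|e_k\|\|l_k\|]\le 4chL_1E\|e_k\|^2$ together with $\|v_k+l_k\|^2\le 2\|v_k\|^2+2\|l_k\|^2$ give $E\|e_{k+1}\|^2\le \rho\,E\|e_k\|^2+2h^2\sigma^2/t$ with $\rho=c^2+4cL_1h+2L_1^2h^2$, and induction from $e_0=0$ finishes the proof. Your sharper route is also valid: conditional orthogonality of the oracle noise gives the factor $(c+hL_1)^2$ and the additive term $h^2\sigma^2/t$, and this implies the stated bound because $(c+hL_1)^2\le\rho$.
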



Lemma~\ref{lem:q_GDreg} shows that the contraction induced by the regularisation term ensures the averaged ZO-GD iterates satisfy a perturbed version of the GD dynamics, where the perturbations are bounded and controllable through the algorithm parameters. The following theorem establishes that, whenever GD converges to a fixed point, ZO-GD inherits the same convergence rate with no extra dimension dependence in the iteration complexity; the dimension affects only the size of the convergence neighbourhood.

\begin{theorem}\label{th:GD_reg}
Consider~\eqref{eq:GD_alg_reg} and~\eqref{eq:ZOGD_avg_reg}. Let $f$ be a possibly non-strongly-convex function with Lipschitz gradients with constant $L_1>0$, and let $h\in\bigl(0,\frac{\sqrt{2(1+c^2)}-2c}{2L_1}\bigr),$ and for $r>0$ let $\mathcal{B}_r(z^e)$ denote a convergence neighbourhood around the fixed point $z^e$ of GD. Suppose that GD applied to the regularised problem~\eqref{eq:reg_main} converges to a fixed point $z^e$. Then there exist constants $\bar{\mu}, \bar{q} > 0$ and $\bar{t}\in\mathbb{N}$ (depending on $h$, $c$, $L_1$, and the Lyapunov function of the GD dynamics) such that, for any $\mu \in (0, \bar{\mu})$ and $t \geq \bar{t}$ satisfying $\sup_k \|q_k\| \leq \bar{q}$ via the bound in Lemma~\ref{lem:q_GDreg}, ZO-GD has the same decay rate as GD and converges to $\mathcal{B}_r(z^e)$ and there exists a class-$\mathcal{K}$ function $\alpha$ such that
\begin{align}\label{eq:r_GD_reg}
r=\alpha\left( h \frac{\mu}{2}L_1(n+3)^{3/2}+h L_1 \sqrt{\frac{2h^2\sigma^2}{t(1-c^2-4cL_1h-2L_1^2h^2)}} \right).
\end{align}
\end{theorem}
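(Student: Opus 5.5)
The proof will combine Lemma~\ref{lem:q_GDreg} with the ISS machinery of Theorem~\ref{thm:ISS_asymp}, with Theorem~\ref{thm:ISS_exp} used when a quadratic Lyapunov function is available. The unperturbed system is the map $w(z)=cz-h\nabla f(z)$ from~\eqref{eq:GD_alg_reg}. The averaged ZO dynamics~\eqref{eq:ZOGD_avg_reg} are exactly of the form~\eqref{eq:dt_perturbed}, $\bar z_{k+1}=w(\bar z_k)+q_k$. Hence the whole argument reduces to two things: producing a Lyapunov function for $w$ near $z^e$, and checking that the perturbation bound of Lemma~\ref{lem:q_GDreg} can be pushed below the admissible level $\bar q$.

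First I will construct the Lyapunov function. Since GD on~\eqref{eq:reg_main} is assumed to converge to the fixed point $z^e$, I interpret this as (local) asymptotic stability of $z^e$. A converse Lyapunov theorem for discrete-time systems (e.g.\ \cite[Ch.~5]{kellett2023introduction}) then supplies a locally Lipschitz $V$ and $\alpha_1,\alpha_2,\alpha_3\in\KKinf$ satisfying~\eqref{eq:sandwich1}--\eqref{eq:decrease1} on some $\mathcal{B}_R(z^e)$. Where more structure is available, I will use it instead. Under the step-size condition, the quantity $1-c^2-4cL_1h-2L_1^2h^2$ is positive. This suggests trying $V(z)=\|z-z^e\|^2$ directly: expanding
\[
\|w(z)-w(z^e)\|^2\le c^2\|z-z^e\|^2+2chL_1\|z-z^e\|^2+h^2L_1^2\|z-z^e\|^2
\]
gives a contraction factor below one. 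In that case Theorem~\ref{thm:ISS_exp} applies with $c_1=c_2=1$, and it yields the ``same decay rate'' claim in exponential form. I will present this route as the primary one and keep the converse-Lyapunov route as the general fallback.

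Second, I will apply the ISS theorem. It produces $\bar r,\bar q>0$, $\beta\in\KL$ and $\gamma\in\KK$ such that the bound~\eqref{eq:ISS} holds whenever $\sup_k\|q_k\|\le\bar q$ and $\bar z_0\in\mathcal{B}_{\bar r}(z^e)$. Lemma~\ref{lem:q_GDreg} bounds $\sup_k\|q_k\|$ by the quantity $Q(\mu,t)$ appearing inside $\alpha$ in~\eqref{eq:r_GD_reg}. This $Q$ is a sum of a term linear in $\mu$ and a term proportional to $t^{-1/2}$, with all other factors fixed by $h,c,L_1,\sigma,n$. I will therefore choose $\bar\mu$ so that the first term is at most $\bar q/2$ and $\bar t$ so that the second is at most $\bar q/2$. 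Then $Q\le\bar q$ for all $\mu<\bar\mu$ and $t\ge\bar t$. Since $\beta(\|\bar z_0-z^e\|,k)\to0$, the iterates enter and remain in any ball of radius slightly larger than $\gamma(Q)$. I will set $\alpha$ to be, say, $2\gamma$, which is class-$\KK$, and this gives~\eqref{eq:r_GD_reg}. The transient term $\beta$ is the one inherited from the unperturbed GD Lyapunov analysis, which is the claimed matching decay rate.

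The main obstacle is the first step: turning the mere hypothesis ``GD converges to $z^e$'' into the sandwich and decrease inequalities on a ball. Convergence alone does not imply Lyapunov stability in general. I would handle this by noting that $w$ is Lipschitz, which gives continuity of solutions in the initial condition, and by reading the hypothesis as local asymptotic stability, so that the converse theorem applies. The step-size condition provides the cleaner alternative, since the quadratic contraction computation above, if it goes through, bypasses the issue entirely.
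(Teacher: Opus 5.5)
\textbf{Verdict: correct, by a different route than the paper for your primary branch.} Your fallback branch is exactly the paper's proof: you read ``GD converges'' as asymptotic stability, invoke a converse Lyapunov theorem, and apply Theorem~\ref{thm:ISS_asymp}. Your choice of $\bar\mu$, $\bar t$ and $\alpha$ also essentially matches the paper's. You are right that $\bar\mu$ and $\bar t$ depend on $n$ and $\sigma$ as well, which the theorem's parenthetical omits.

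\textbf{Your primary branch works unconditionally.} The step you hedged with ``if it goes through'' does go through. Since $c\ge 0$ and $hL_1>0$, the stated step-size range gives
\[
(c+hL_1)^2=c^2+2chL_1+h^2L_1^2\le c^2+4cL_1h+2L_1^2h^2<1 ,
\]
so $w(z)=cz-h\nabla f(z)$ is a global contraction with modulus $c+hL_1<1$. Hence $V(z)=\|z-z^e\|^2$ satisfies the hypotheses of Theorem~\ref{thm:ISS_exp} with $c_1=c_2=1$ and decrease constant $1-(c+hL_1)^2\in(0,1)$. This gives the explicit rate $(c+hL_1)^k$, and the converse-Lyapunov fallback is never needed.

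\textbf{What your route buys.}
\begin{itemize}
\item The hypothesis that GD converges to $z^e$ becomes a consequence: Banach's fixed-point theorem gives existence, uniqueness and global exponential stability of $z^e$.
\item You avoid the logical gap you correctly flagged, which the paper's proof passes over: mere convergence does not provide the stability a converse Lyapunov theorem needs.
\item All constants ($\bar r$, $\bar q$, $\gamma$) are explicit.
\item You could bypass Theorem~\ref{thm:ISS_exp} and iterate $\|\bar z_{k+1}-z^e\|\le (c+hL_1)\|\bar z_k-z^e\|+\|q_k\|$ directly. This gives the global estimate $\|\bar z_k-z^e\|\le (c+hL_1)^k\|\bar z_0-z^e\|+Q/(1-c-hL_1)$, with a linear gain and no restriction $\bar z_0\in\mathcal{B}_{\bar r}(z^e)$.
\end{itemize}

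\textbf{What the paper's route is for.} The converse-Lyapunov argument is the template for regimes where only asymptotic convergence of the first-order method is known, which motivates the ``possibly non-strongly-convex'' framing. However, as Remark~\ref{rem:reg_sc} concedes, the admissible step-size range already forces $\lambda>L_1/2$. That is precisely $c+hL_1<1$, so the extra generality is never actually used in this theorem.
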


The admissible parameter range in Lemma~\ref{lem:q_GDreg} implies that the regularised objective is strongly convex  (Remark~\ref{rem:reg_sc}). Nevertheless, the framework provides a systematic way to apply ISS to originally non-convex objectives with explicit control over the trade-off between proximity to the original minimiser and neighbourhood size. Next, we illustrate the theoretical findings via numerical examples.

\section{Numerical examples}\label{sec:exps}
We illustrate the main results through two settings: a strongly convex quadratic (Section~\ref{sec:quad_exp}) and a non-convex neural network with $L_2$ regularisation (Section~\ref{sec:nn_exp}). Extended parameter studies are provided in Appendices~\ref{app:quad_exp} and~\ref{app:nn_exp}.

\subsection{Quadratic objective}\label{sec:quad_exp}
We consider $\min_{x\in\mathbb{R}^n} x^\top Ax$, where $n=1000$ and $A$ is positive definite with condition number $100$ and maximum eigenvalue $100$, comparing GD and ZO-GD (averaged over 5 runs). Figure~\ref{fig:param_lr} shows the effect of varying $h \in \{10^{-4},10^{-5},10^{-7}\}$ with $\mu = 10^{-6}$ and $t=1$ fixed. When $h$ is too large, the perturbation exceeds the admissible bound of Theorem~\ref{thm:ISS_exp} and ZO-GD diverges. As $h$ decreases, ZO-GD converges to a progressively smaller neighbourhood of the GD trajectory, with no extra dimension dependence. Analogous sweeps over $\mu$ and $t$, as well as parameter compensation experiments, are presented in Appendix~\ref{app:quad_exp}. In all cases, the observations are consistent with the bound in Lemma~\ref{lem:q_GDsc}: $h$ and $\mu$ control the smoothing bias, while $h$ and $t$ control the variance term.

\begin{figure}[htb]
	\centering \includegraphics[width=0.9\textwidth]{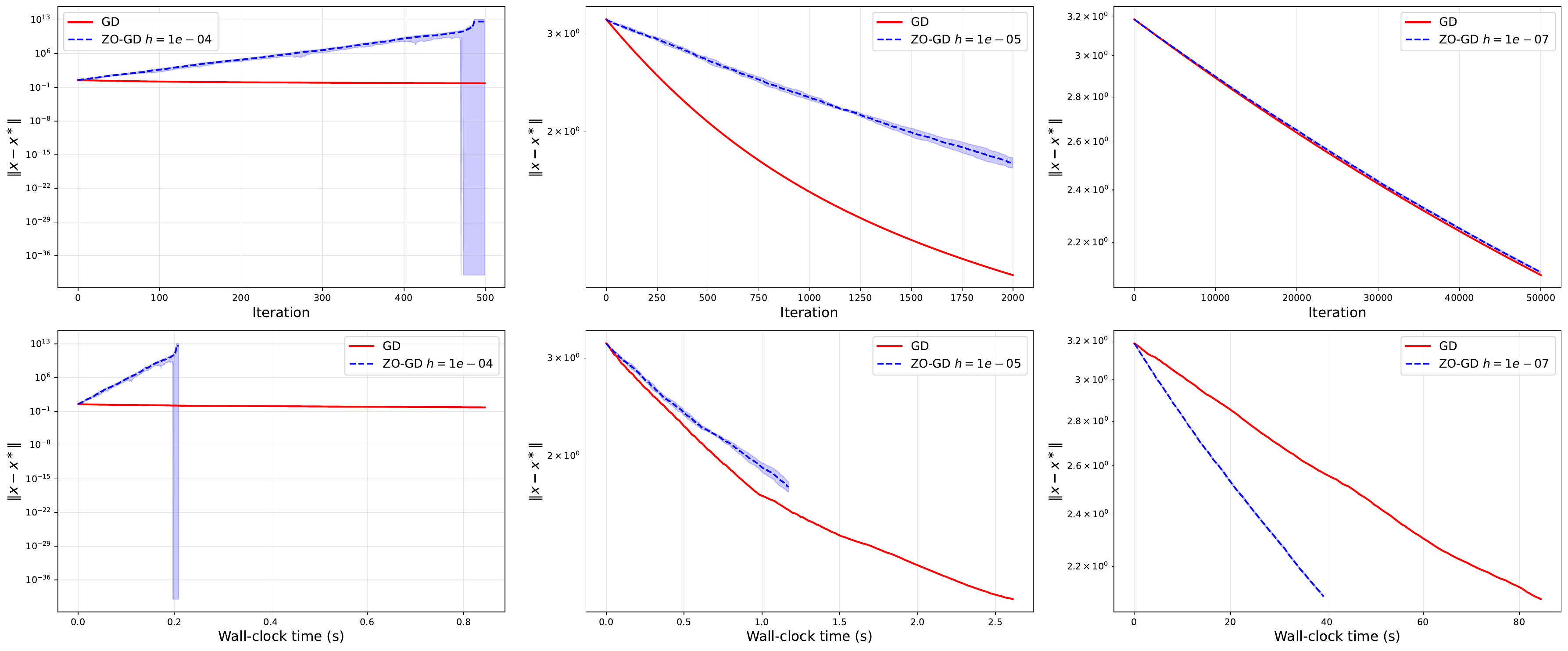}
	\caption{Quadratic objective ($n=1000$): effect of the step size $h$ on GD (solid) and ZO-GD (dashed). Reducing $h$ shrinks the perturbation and brings ZO-GD closer to GD.}\label{fig:param_lr}
\end{figure}

\subsection{Binary classification with a neural network}\label{sec:nn_exp}
We consider binary classification on MNIST (digits 0 vs 1) using a two-layer fully connected network with ReLU activations and sigmoid output ($d=784$, $H=128$ (hidden layer neurons), $n=100{,}609$ parameters). We minimise the $L_2$-regularised logistic loss with $\lambda=10^{-2}$ and $M=2000$ training samples. Since the logistic loss composed with a neural network is non-convex, this example falls in the framework of Section~\ref{sec:L2reg}.

\paragraph{Convergence comparison.} Figure~\ref{fig:nn_convergence} shows the training loss for all six methods (GD, HB, NAG and their ZO counterparts). Shaded regions indicate the standard deviation across 3 independent runs, and the small width confirms a small perturbation. We set $h=1\times10^{-3}$ for GD and ZO-GD, $h_1=1\times10^{-3}$, $h_2=5\times10^{-4}$ for HB/NAG and their ZO counterparts with smoothing parameter $\mu=10^{-7}$ and number of sampled directions $t=7$. The ZO trajectories closely track their FO counterparts, and all methods reach $99.55\%$ training accuracy and $99.40\%$ test accuracy. It takes approximately 4 times longer to run the ZO methods compared to their FO counterpart. Parameter sensitivity experiments confirming the same phenomena as in the quadratic case are provided in Appendix~\ref{app:nn_exp}.

\begin{figure}[htb]
	\centering \includegraphics[width=0.9\textwidth]{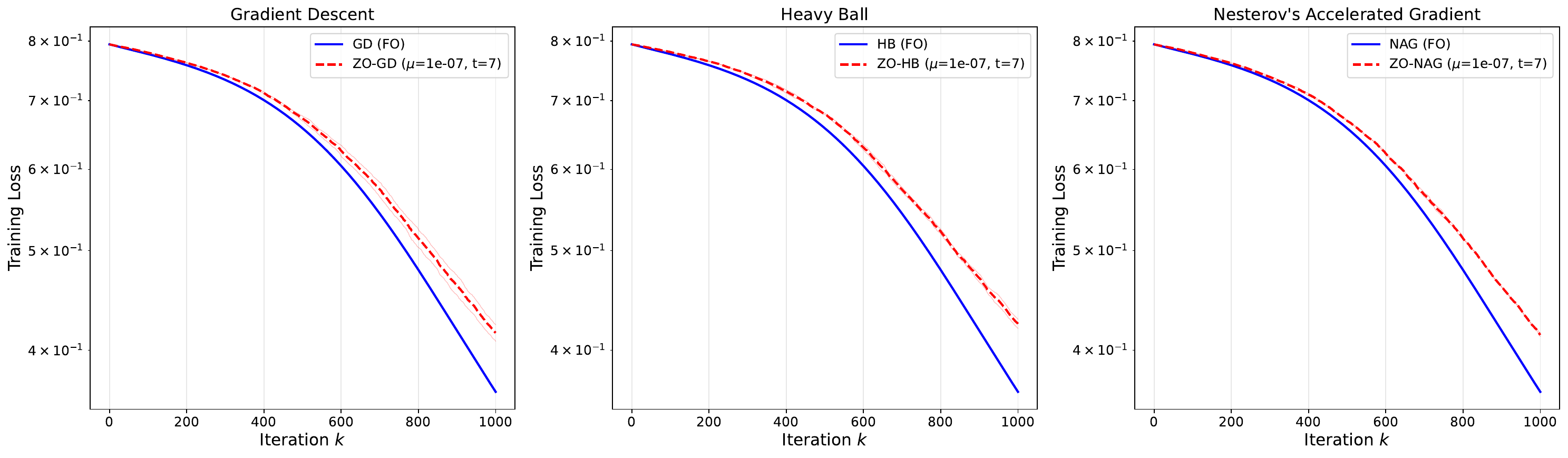}
	\caption{Training loss for GD, HB, NAG and their ZO counterparts on a two-layer neural network ($n=100{,}609$). Solid: FO; dashed: ZO. All methods achieve $99.55\%$ train / $99.40\%$ test accuracy.}\label{fig:nn_convergence}
\end{figure}

\paragraph{Dimension scaling.} To directly validate the absence of extra dimension dependence, we vary $H\in\{16,32,64,128,256\}$ ($n$ from ${\approx}12{,}700$ to ${\approx}201{,}000$) and run GD and ZO-GD with identical parameters for 1000 iterations. We set $h = 5\times10^{-4},$ $t=10,$ and $\mu=10^{-8}.$ Figure~\ref{fig:nn_scaling}~ shows that ZO-GD tracks GD at the same rate across all dimensions.  Table~\ref{tab:dim_scaling} reports iteration counts to reach within $20\%$ of the FO final loss: the ratio ZO/FO stays close to $1$. Moreover, Table~\ref{tab:dim_scaling} confirms the per-iteration time ratio remains approximately constant.

\begin{figure}[htb]
	\centering \includegraphics[width=0.45\textwidth]{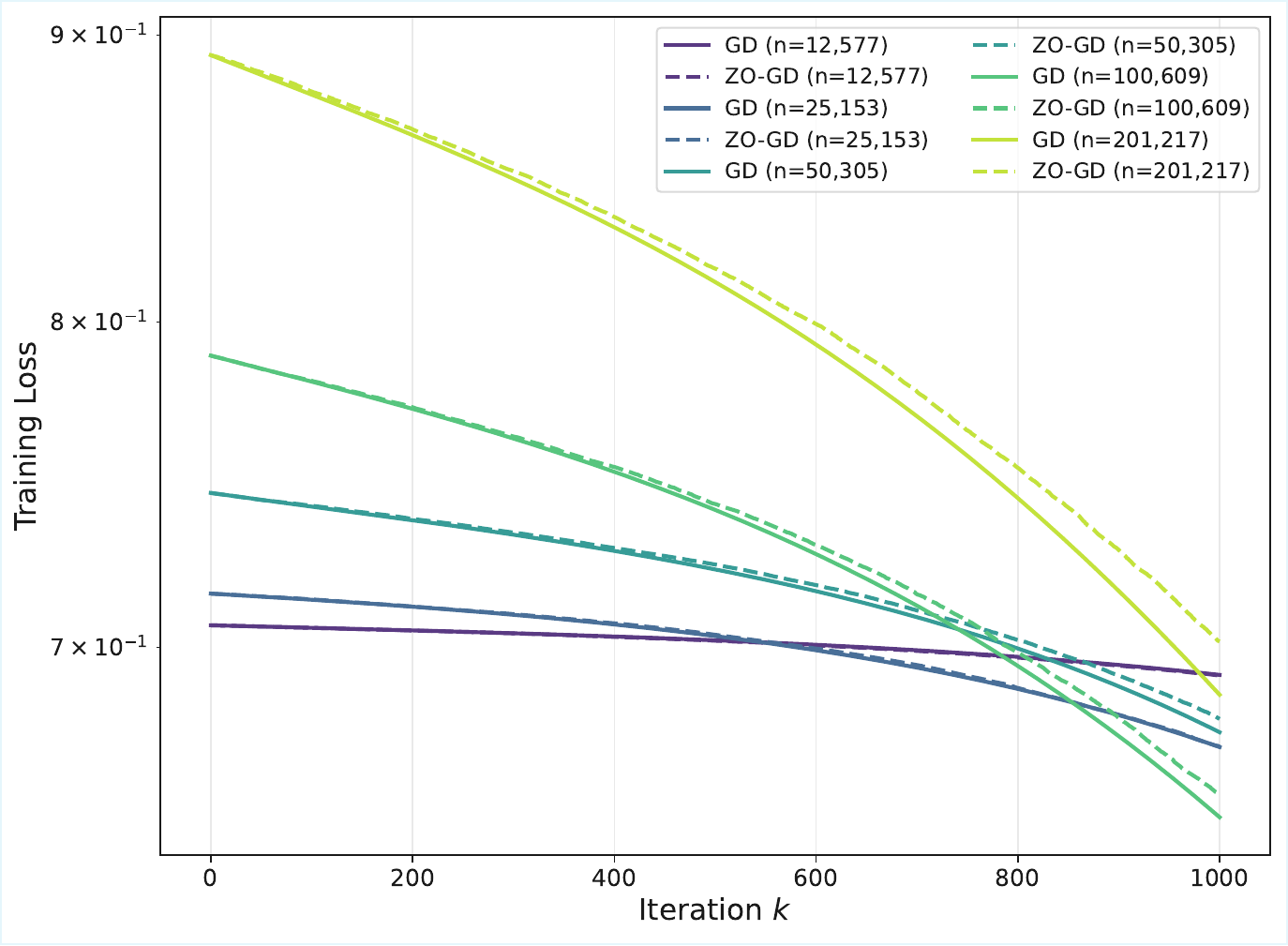}
	\caption{Dimension scaling: 
    Training loss for GD (solid) and ZO-GD (dashed) across five network sizes.
    }\label{fig:nn_scaling}
\end{figure}


\begin{table}[htb]
\centering
\caption{Dimension scaling: iterations to reach within $20\%$ of FO final loss.}\label{tab:dim_scaling}
\begin{tabular}{@{}cccccccc@{}}
\toprule
$H$ & $n$ & FO final & ZO final  & Iter ZO/FO & Time ZO/FO \\
\midrule
16  & 12,577  & 0.6922 & 0.6920   & 0.99  & 5.5  \\
32  & 25,153  & 0.6720 & 0.6721   & 1  & 4.9  \\
64  & 50,305  & 0.6761 & 0.6798  & 1.03 & 5.4 \\
128 & 100,609 & 0.6529 & 0.6584 &1.03 & 5.7 \\
256 & 201,217 & 0.6866 & 0.7017 & 1.05 & 4.5 \\
\bottomrule
\end{tabular}
\end{table}

\section{Conclusion and future directions}\label{sec:conc}

In this work, we revisited the commonly held belief that ZO methods inherently suffer from additional dimension dependence in their iteration complexity compared to their FO counterparts. By adopting a dynamical systems perspective, we showed that, under suitable conditions, the averaged dynamics of ZO algorithms can be interpreted as perturbed versions of their FO counterparts with bounded, controllable perturbations. Leveraging ISS arguments, we proved that ZO methods inherit the same convergence rates as FO methods in expectation, converging to a neighbourhood of the FO fixed point whose size can be made arbitrarily small by tuning the algorithm parameters. We note that while the iteration complexity is dimension-free, achieving a target neighbourhood radius may require parameter choices (e.g., $\mu$ or $t$) that depend on problem parameters, including the variance, so the total function evaluation cost may retain a dimension dependence in the worst case scenario. These findings suggest that the practical efficiency of ZO methods may be closer to that of FO methods than previously understood, particularly when gradient information is unavailable or expensive.

Promising directions for future work include extending the analysis to broader classes of non-convex and non-smooth problems, establishing high-probability guarantees rather than results in expectation, and exploring adaptive schemes for tuning the smoothing and step-size parameters to optimise the trade-off between convergence speed and neighbourhood size.


\bibliography{NIPS_2026}
\bibliographystyle{alpha}

\appendix
\section{Related works and Basic definitions}\label{app:def}

In this section, we provide a detailed comparison of our results with prior work on zeroth-order optimisation, followed by the basic definitions used throughout the paper.

\subsection{Comparison with prior work}

Table~\ref{tab:comparison} summarises the convergence guarantees of representative ZO methods and highlights how our framework differs from existing analyses. In all prior works, the iteration complexity carries an explicit factor depending on the problem dimension $n$ (or an effective dimension $n_{\mathrm{eff}}$), and convergence is to the exact optimiser. By contrast, our ISS-based framework eliminates the dimension factor from the iteration complexity entirely; the dimension dependence instead appears in the radius of the convergence neighbourhood, which can be made arbitrarily small by tuning the algorithm parameters $\mu$, $h$, and $t$. Moreover, our framework applies uniformly to ZO-GD, ZO-HB, and ZO-NAG, whereas each prior result requires a separate, algorithm-specific analysis. We further note that in prior analyses such as \cite{nesterov_random_2017}, the prescribed step size for ZO-GD scales as $O(1/n)$, which is not consistent with standard FO gradient descent. By contrast, our  framework permits the same step size as the FO method, with no dependence on the dimension (see Remark~\ref{rem:dim_h}).

\begin{table}[t]
\footnotesize
\centering
\caption{Comparison of convergence guarantees for ZO methods.}
\label{tab:comparison}
\renewcommand{\arraystretch}{1.3}
\setlength{\tabcolsep}{4pt}
\small
\begin{tabular}{@{}lcccc@{}}
\toprule
\textbf{Reference} & \textbf{Assumptions on $f$} & \textbf{Iter. Dim. dependence}& \textbf{queries per iter.} & \textbf{Convergence target} \\
\midrule
\cite{nesterov_random_2017}
  & convex / s.c.\
  & $O(n)$
  & 2
  & Minimiser\\
\cite{duchi2015optimal}
  & convex
  & $O(n)$
  & 2
  & Minimiser\\
\cite{ghadimi2013stochastic}
  & nonconvex
  & $O(n)$
  & 2$^\ast$
  & $\|\nabla f\|^2 \leq \varepsilon$\\
\cite{yue2023zeroth}
  & \makecell[c]{convex / s.c;\\
  Hessian regularity}
  & $O(n_{\mathrm{eff}})$
  & 2
  & Minimiser \\
\cite{wang2018stochastic}
  & \makecell[c]{convex;\\sparse gradients}
  & $O(s \log n)$
  & 2
  & Minimiser\\
\midrule
\textbf{This work}
  & \makecell[c]{s.c.\ (Thms.~3--5);\\n.c + $L_2$ reg.\ (Thm.~6)}
  & \textbf{No explicit}
   & $2t$
  & \makecell[c]{Neighbourhood of \\FO fixed point (can be \\made arbitrarily small)}\\
\bottomrule

\end{tabular}

\vspace{0.5em}

{\footnotesize s.c.\ = strongly convex; n.c.\ = non-convex; $2^\ast:$ using diminishing step sizes.}
\end{table}

\subsection{Basic definitions}

We now recall the standard regularity and convexity conditions assumed throughout the paper.

\begin{definition}[Lipschitz continuity and gradients]\label{def:lip}
	A continuous function $f:\mathbb{R}^n \rightarrow \mathbb{R}$ is globally Lipschitz with constant $L_0>0$ if $\| f(x)- f(y)\|\leq L_0\|x-y\|$ for all $x,y\in \mathbb{R}^n$. If $f$ is $C^1$, the gradient of $f$ is globally Lipschitz with constant $L_1>0$ if
	\begin{align}
		\|\nabla f(x)-\nabla f(y)\|\leq L_1\|x-y\|, \qquad \forall \ x,y\in \mathbb{R}^n. \label{eq:L_gradient}
	\end{align}
\end{definition}

\begin{definition}[Strong convexity]\label{def:strong_convex}
Let $f:\mathbb{R}^n \rightarrow \mathbb{R}$ be a $C^1$
function. 
Then $f$ is said to be $\beta$-strongly convex with constant $\beta>0$ if there exists a constant $\beta>0$ such that
\begin{align}
f(y) \geq f(x) + \nabla f(x)^\top (y-x) + \frac{\beta}{2}\|x-y\|^2, \qquad \forall \ x,y\in \mathbb{R}^n. 
\end{align}
\end{definition}

\section{Additional details and proofs of Section~\ref{sec:per}}\label{app:per}
In this section, we present additional explanations and proofs of the results in Section~\ref{sec:per}.

\begin{lemma}[\mbox{\cite[Lem. 10]{kellett2014compendium}}]\label{lem:addk}
Let $\alpha:\mathbb{R}_{\ge 0}\to\mathbb{R}_{\ge 0}$ be a class-$\mathcal{K}$ function. Then for any $a,b\ge0$ and any $\varepsilon>0$,
\begin{equation}
\alpha(a+b)
\le
\alpha((1+\varepsilon)a)
+
\alpha\!\left((1+\tfrac{1}{\varepsilon})b\right).
\end{equation}
\end{lemma}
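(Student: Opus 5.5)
The inequality $\alpha(a+b)\le \alpha((1+\varepsilon)a)+\alpha((1+\tfrac1\varepsilon)b)$ follows from monotonicity of $\alpha$ after a case split on which of the two summands dominates. The idea is that $a+b$ is always bounded by the larger of $(1+\varepsilon)a$ and $(1+\tfrac1\varepsilon)b$. Since $\alpha$ is nondecreasing and nonnegative, $\alpha(\max\{x,y\})\le \alpha(x)+\alpha(y)$, which gives the claim.

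Fix $a,b\ge 0$ and $\varepsilon>0$, and split into two cases.

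\emph{Case $b\le \varepsilon a$.} Then $a+b\le a+\varepsilon a=(1+\varepsilon)a$. Since $\alpha$ is increasing, $\alpha(a+b)\le \alpha((1+\varepsilon)a)$. Because $\alpha\ge 0$ on $\mathbb{R}_{\ge 0}$, we may add the nonnegative term $\alpha((1+\tfrac1\varepsilon)b)$ to the right-hand side.

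\emph{Case $b>\varepsilon a$.} Then $a<b/\varepsilon$, so $a+b< b+b/\varepsilon=(1+\tfrac1\varepsilon)b$. Monotonicity gives $\alpha(a+b)\le\alpha((1+\tfrac1\varepsilon)b)$, and we add the nonnegative term $\alpha((1+\varepsilon)a)$.

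The only point that needs care is the domain: the notation section allows class-$\mathcal{K}$ functions on $[0,a)$. However, the lemma takes $\alpha$ defined on all of $\mathbb{R}_{\ge 0}$, so every argument $(1+\varepsilon)a$, $(1+\tfrac1\varepsilon)b$ and $a+b$ is admissible. The degenerate cases $a=0$ or $b=0$ are covered by the two cases above, since $\alpha(0)=0$. No other obstacle arises: the argument uses only monotonicity and nonnegativity of $\alpha$, not continuity.
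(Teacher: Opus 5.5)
Your proof is correct: splitting on $b\le\varepsilon a$ versus $b>\varepsilon a$ shows $a+b\le\max\{(1+\varepsilon)a,(1+\tfrac{1}{\varepsilon})b\}$, and monotonicity and nonnegativity of $\alpha$ then give the bound. The paper does not prove the lemma itself but cites Kellett's compendium, and your case split is the standard argument for this weak triangle inequality, so it takes essentially the same approach.
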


\begin{proof}[Proof of Theorem~\ref{thm:ISS_exp}]
Let $R>0$ be such that \eqref{eq:sandwich}--\eqref{eq:decrease} hold on $\mathcal{B}_R(z^e)$. Since $V$ is locally Lipschitz on $\mathcal{B}_R(z^e)$, there exist constants $L_V>0$ satisfying
\[
|V(x)-V(y)|\le L_V\|x-y\|
\]
for all $x,y\in\mathcal{B}_R(z^e)$. Write $\lambda:=1-c\in(0,1)$ and $\delta:=\sup_{j\ge 0}\|q_j\|$.
Let $\bar{r},\bar{q}>0$, let $z_0\in \mathcal{B}_{\bar{r}}(z^e)$, let $\{z_k\}_{k\in \N}$ be defined through $z_{k+1}=w(z_k)+q_k$ for $k\in \N$ and $\{q_k\}_{k\in \N} \subset \mathcal{B}_{\bar{q}}(z^e)$, and let $\bar{r}$, $\bar{q}$ be defined such that $\{z_k\}_{k\in \N} \subset \mathcal{B}_{\bar{r}}(z^e)$. (The existence of $\bar{r},\bar{q}>0$ with this property will be shown later.) 
Using the Lipschitz property of $V$ and \eqref{eq:decrease}, for
the perturbed successor $z_{k+1}=w(z_k)+q_k$, the estimate
\begin{equation}\label{eq:one_step}
V(z_{k+1})
= V\bigl(w(z_k)+q_k\bigr)
\le V(w(z_k)) + L_V\|q_k\|
\le \lambda\,V(z_k) + L_V\|q_k\|
\end{equation}
is obtained. Iterating~\eqref{eq:one_step} from $k=0$ yields
\[
V(z_k)
\le \lambda^k V(z_0) + L_V\sum_{j=0}^{k-1}\lambda^{k-1-j}\|q_j\|
\le \lambda^k V(z_0) + L_V\delta\sum_{j=0}^{k-1}\lambda^j
\le \lambda^k V(z_0) + \frac{L_V}{c}\,\delta,
\]
where the geometric series was bounded by $\sum_{j=0}^{\infty}\lambda^j = 1/c$. Applying $c_1\|z_k-z^e\|^2\leq V(z_k)$ and the sandwich bound $V(z_0)\le c_2\|z_0-z^e\|^2$, we obtain
\[
\|z_k-z^e\|^2
\le \lambda^kc_1^{-1}\,c_2\|z_0-z^e\|^2+\tfrac{L_V}{cc_1}\,\delta
\]
or
\[
\|z_k-z^e\|
\le \sqrt{\frac{c_2}{c_1}}\|z_0-z^e\|\lambda^{\frac{k}{2}}
   +\gamma\!\Bigl(\sup_{0\le j\le k}\|q_j\|\Bigr),
\qquad k\ge 0,
\]
 where $\gamma(\delta) = \sqrt{\frac{L_v}{cc_1}}\sqrt{\delta},$ which is a $\KK$ function.

It remains to choose $\bar r>0$ and $\bar q>0$ so that
$\{z_k\}_{k\in \N} \subset \mathcal{B}_R(z^e)$. 
For $z\in \R^n$ such that $V(z) \leq \ell =  c_1R^2$, \eqref{eq:sandwich} implies that 
$\|z-z^e\|^2 \leq R^2$, i.e.,  $z \in \mathcal{B}_R(z^e)$, and we can conclude $\{z\in \R^n| \ V(z) \leq  c_1R^2 \} \subset \mathcal{B}_R(z^e)$. 
From~\eqref{eq:one_step}, the set $\{z\in \R^n| \ V(z) \leq  \ell \}$
is forward-invariant for the perturbed system whenever $\lambda\,\ell+L_V\bar q\le\ell$, i.e.\ whenever $\bar q\le c\,\ell/L_V$. Setting
\[
\bar r:=\sqrt{\frac{\ell}{c_2}}, \qquad \bar q:=\frac{c\,\ell}{L_V}
\]
ensures that $\|z_0-z^e\|\le\bar r$ implies $V(z_0)\le\ell$, and the sublevel set $\{z\in \R^n| \ V(z) \leq  \ell \}$ is forward-invariant under the perturbed dynamics, so the Lipschitz bounds and the decrease condition remain valid for all $k\ge 0$. This completes the proof.
\end{proof}
\begin{proof}[Proof of Lemma~\ref{lem:comparison}]

\textbf{Item~\textup{(i)}:} Since $\varphi$ is continuous with $\varphi(0)=0$ and $\varphi(s)<s$ for $s>0$, the iterates $\varphi^{(k)}(s):=\varphi\circ\cdots\circ\varphi(s)$ ($k$ times) satisfy $\varphi^{(k)}(s)\downarrow 0$ as $k\to\infty$ for each fixed $s>0$. Define $\hat\beta(s,k):=\varphi^{(k)}(s)$. This is continuous, non-decreasing in $s$, decreasing to $0$ in $k$, and $\hat\beta(0,k)=0$; so it can be upper-bounded by a class-$\KL$ function, which we denote $\hat\beta$. This gives (i).

\textbf{Item~\textup{(ii)}:} Fix $\varepsilon>0$. Since $\varphi(\varepsilon)<\varepsilon$, set $\mu:=\varepsilon-\varphi(\varepsilon)>0$ and take $\bar d(\varepsilon):=\mu/2$. If $d\le\bar d(\varepsilon)$ and $v_k\le\varepsilon$, then $v_{k+1}\le\varphi(\varepsilon)+d\le\varphi(\varepsilon)+\mu/2=\varepsilon-\mu/2<\varepsilon$. This in particular implies that $\varphi(s)\in [0,\varepsilon]$ for all $s\in [0,\varepsilon]$.
Meanwhile, if $v_k>\varepsilon$ we have $v_{k+1}\le\varphi(v_k)+d<v_k-\mu+d\le v_k-\mu/2$ (using $\mu\le s-\varphi(s)$ for $s\ge\varepsilon$ by continuity and compactness on $[\varepsilon,v_0]$). Hence, for $v_k>\varepsilon$, 
$v_k$ decreases by at least $\mu/2$ per step and there exists $K\in \N$ such that $v_K\in [0,\varepsilon]$.

\textbf{Item~\textup{(iii)}:} Fix $R>0$ throughout and let $\hat\beta\in\KL$ be the function from part~(i).
Since $\mathrm{Id}-\varphi : [0,\infty)\rightarrow [0,b] $ is class-$\KK$ and $d\leq b$, its inverse is strictly increasing and is defined at $d$.

Define
\[
\sigma(d) := (\mathrm{Id}-\varphi)^{-1}(d),
\]
so that $\varphi(\sigma(d))+d = \sigma(d)$.
We record two facts, both following directly from $\mathrm{Id}-\varphi$ being strictly increasing. We will analyse $v_{k+1},$ for the cases where $v_k\in[0,\sigma(d)]$ or $v_k\in(\sigma(d),\infty).$

\emph{Forward invariance} of $[0,\sigma(d)]$ under the mapping $\phi(\cdot)+d$: if $v_k\le\sigma(d)$, then
$v_{k+1}\le\varphi(\sigma(d))+d=\sigma(d)$, so $[0,\sigma(d)]$ is forward-invariant.
 
\emph{Strict decrease} of $v_k$ on $(\sigma(d),\infty)$:
if $v_k>\sigma(d)$, then
$v_k-\varphi(v_k)>\sigma(d)-\varphi(\sigma(d))=d$, hence $\phi(v_k)+d<v_k$ and
$v_{k+1}\le\varphi(v_k)+d<v_k$.
 
Combining these two properties, the sequence $\{v_k\}$ is bounded above by $\max(v_0,\sigma(d))$ for all $k\ge 0$: it cannot increase above $v_0$ once it starts decreasing (which happens immediately if $v_0>\sigma(d)$), and it is trapped once it enters $[0,\sigma(d)]$. In particular, for $v_0\in[0,R]$:
\begin{equation}\label{eq:uniform}
v_k \le \max\bigl(R,\,\sigma(d)\bigr) \quad\text{for all } k\ge 0.
\end{equation}
 
We claim that for all $v_0\in[0,R]$, $d\ge 0$, and $k\ge 0$,
\begin{equation}\label{eq:goal}
v_k \le \hat\beta(v_0,k) + \eta_R(d),
\end{equation}
for some function $\eta_R$ with $\eta_R(0)=0$ that depends only on $d$ (and $R$), not on $v_0$ or $k$.
To see this, write $v_k = \hat\beta(v_0,k) + e_k$ where $e_k := v_k - \hat\beta(v_0,k)$ is the excess of the perturbed trajectory over the unperturbed bound. We need to show that $e_k$ is bounded by a function of $d$ 
(uniformly over $v_0\in[0,R]$ and $k\ge 0$).
Consider the unperturbed comparison sequence $u_0=v_0$, $u_{k+1}=\varphi(u_k)$, so $u_k=\hat\beta(v_0,k)$ by part~(i). Both sequences satisfy:
\begin{alignat*}{2}
v_{k+1} &\le \varphi(v_k) + d, &\qquad v_0&=s\in[0,R],\\
u_{k+1} &= \varphi(u_k), &\qquad u_0&=s.
\end{alignat*}
Subtracting and using $\varphi(v_k)-\varphi(u_k)\le v_k - u_k$ (which follows from $\mathrm{Id}-\varphi$ being non-decreasing, so $v_k-\varphi(v_k)\ge u_k-\varphi(u_k)$ when $v_k\ge u_k$):
\[
v_{k+1}-u_{k+1} \le (v_k - u_k) + d \quad\text{whenever } v_k\ge u_k.
\]
Since $v_0-u_0=0$, iterating this inequality gives $v_k - u_k \le kd$ whenever $v_k\ge u_k$ persists for all steps up to $k$.
 
At first glance, $kd$ grows without bound. But recall that $v_k$ eventually enters and remains in $[0,\sigma(d)+\delta]$ for any fixed $\delta>0$. Let $K^*$ be the first time $v_k\le\sigma(d)+\delta$. The strict decrease above $\sigma(d)+\delta$ gives a minimum decrease per step: for $v_k\in[\sigma(d)+\delta,\max(R,\sigma(d)+\delta)]$, we have
\[
v_k - v_{k+1} \ge v_k - \varphi(v_k) - d \ge (\mathrm{Id}-\varphi)(v_k) - d.
\]
On the compact set $[\sigma(d)+\delta,\max(R,\sigma(d)+\delta)],$ where $\delta>0$ can be arbitrary small (fixed and bounded away from zero), the continuous function $(\mathrm{Id}-\varphi)(s)-d$ is strictly positive (being zero only at $s=\sigma(d)$). Hence, when $R>\sigma(d)+\delta,$ the entry time satisfies the existence of $K^\ast$ to have $v_{K^\ast}\le\sigma(d)+\delta$ (and $K^*=0$ otherwise). In particular, $K^*$ depends only on $R$ and $d$ (not on $v_0$ beyond the constraint $v_0\le R$).
Now we bound the excess for all $k$:
\begin{itemize}
\item For $k\le K^*$: using $v_k\le\max(R,\sigma(d)+\delta)$ from~\eqref{eq:uniform}
and $\hat\beta(v_0,k)\ge 0$, we get $e_k\le\max(R,\sigma(d)+\delta)$. But more
precisely, by the iterated bound above: $e_k = v_k - u_k \le K^* d$. 
\item For
$k> K^*$: $v_k\le\sigma(d)+\delta$ and $\hat\beta(v_0,k)\ge 0$, so $e_k\le\sigma(d)+\delta$.
\end{itemize}
Therefore, for all $k\ge 0$:
\[
e_k \le \max\bigl(K^* d,\;\sigma(d)+\delta\bigr) 
=: \eta_{R,\delta}(d).
\]
Since $\delta>0$ is arbitrary, for any $d>0$ we can 
choose $\delta = d$ (or any class-$\mathcal{K}$ function 
of $d$), giving
\[
e_k \le \max\bigl(K^* d,\;\sigma(d)+d\bigr) 
=: \eta_R(d).
\]
Since $\sigma(0)=0$ and $K^*=0$ when $d=0$, we have 
$\eta_R(0)=0$. The function $\eta_R$ is non-negative, 
finite, nondecreasing in $d$, and satisfies 
$\eta_R(0)=0$. By regularisation, there exists 
$\hat\gamma_R\in\mathcal{K}$ with 
$\eta_R(d)\le\hat\gamma_R(d)$ for all $d\ge 0$.
 
Substituting into~\eqref{eq:goal} and using $u_k = \hat\beta(v_0,k)$:
\[
v_k \le \hat\beta(v_0,k) + \hat\gamma_R(d), \qquad k\ge 0,
\]
for all $v_0\in[0,R]$ and $d\ge 0$.
\end{proof}

\begin{lemma}[\mbox{\cite[Lemma B.1]{jiang2001input}}]\label{lem:i-k}
    For any $\KKinf$ function $\alpha,$ there exists a $\KKinf$ function $\hat{\alpha}$ such that the following holds:
    \begin{itemize}
        \item $\hat{\alpha}(r)\leq\alpha(r)$ for all $r\geq0$; and,
        \item $\mathrm{Id} -\hat{\alpha}\in\KK.$
    \end{itemize}
\end{lemma}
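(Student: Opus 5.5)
We build $\hat\alpha$ explicitly from $\alpha$ by shrinking it to at most half the identity. The first candidate is
\[
\hat\alpha_0(r) := \min\bigl\{\alpha(r),\, \tfrac{r}{2}\bigr\}.
\]
This function is continuous and nonnegative, with $\hat\alpha_0(0)=0$. It is strictly increasing, being a pointwise minimum of two strictly increasing functions. It is unbounded, since both $\alpha(r)\to\infty$ and $r/2\to\infty$, so $\hat\alpha_0\in\KKinf$. By construction $\hat\alpha_0(r)\le\alpha(r)$ for all $r\ge 0$, which gives the first bullet.

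For the second bullet we must show that $\mathrm{Id}-\hat\alpha_0$ is of class $\KK$. We have
\[
r-\hat\alpha_0(r)=\max\bigl\{r-\alpha(r),\ \tfrac{r}{2}\bigr\}.
\]
This is continuous and vanishes at $0$. Strict monotonicity is the point to check, since $r-\alpha(r)$ alone need not be monotone. We argue on the two regions of $[0,\infty)$ where each branch of the minimum is active.
\begin{itemize}
\item Where $\hat\alpha_0(r)=r/2$, the difference equals $r/2$, which is strictly increasing.
\item Where $\hat\alpha_0(r)=\alpha(r)$, take $r_1<r_2$. The estimate $r_2-\hat\alpha_0(r_2)\ge r_2/2>r_1/2$ does not immediately compare with $r_1-\alpha(r_1)$, which may exceed $r_1/2$. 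So we argue directly instead of by cases.
\end{itemize}

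For the direct argument, take $r_1<r_2$. Then
\[
(r_2-\hat\alpha_0(r_2))-(r_1-\hat\alpha_0(r_1)) = (r_2-r_1)-(\hat\alpha_0(r_2)-\hat\alpha_0(r_1)),
\]
so strict monotonicity of $\mathrm{Id}-\hat\alpha_0$ is equivalent to $\hat\alpha_0(r_2)-\hat\alpha_0(r_1)<r_2-r_1$. This is a Lipschitz-type condition with constant below $1$, and $\hat\alpha_0$ need not satisfy it, because $\alpha$ may be steep. For example, $\alpha$ could be nearly a step function below the line $r/2$.

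This failure is the main obstacle, and the fix is to regularise $\hat\alpha_0$ by an inf-convolution. Define
\[
\hat\alpha(r):=\inf_{s\ge 0}\bigl\{\hat\alpha_0(s)+\tfrac12|r-s|\bigr\}.
\]
We then verify the following properties in turn.
\begin{enumerate}
\item \emph{Domination.} Taking $s=r$ gives $\hat\alpha\le\hat\alpha_0\le\alpha$.
\item \emph{Lipschitz bound.} As an infimum of $\tfrac12$-Lipschitz functions of $r$, $\hat\alpha$ is $\tfrac12$-Lipschitz. Hence $\hat\alpha(r_2)-\hat\alpha(r_1)\le \tfrac12(r_2-r_1)<r_2-r_1$, so $\mathrm{Id}-\hat\alpha$ is strictly increasing, continuous, and zero at $0$, i.e.\ of class $\KK$.
\item \emph{Value at zero.} Each term in the infimum is nonnegative, so $\hat\alpha\ge 0$, and $\hat\alpha(0)=0$.
\item \emph{Monotonicity.} For $s>r$ we have $\hat\alpha_0(s)+\tfrac12(s-r)\ge\hat\alpha_0(r)$, so the infimum is attained over $s\le r$. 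There the map $r\mapsto\hat\alpha_0(s)+\tfrac12(r-s)$ is increasing in $r$, hence $\hat\alpha$ is nondecreasing.
\item \emph{Positivity.} $\hat\alpha(r)>0$ for $r>0$: for $s\ge r/2$ the term is at least $\hat\alpha_0(r/2)>0$, and for $s<r/2$ it is at least $r/4$.
\item \emph{Unboundedness.} By the same split, $\hat\alpha(r)\ge\min\{\hat\alpha_0(r/2),r/4\}\to\infty$.
\end{enumerate}

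The remaining gap is that $\hat\alpha$ is only known to be nondecreasing, not strictly increasing. We close it by replacing $\hat\alpha$ with $\tilde\alpha(r):=\tfrac12\hat\alpha(r)+\tfrac14\min\{r,\hat\alpha(r)\}$. Each summand is nondecreasing, so $\tilde\alpha$ is nondecreasing. For strictness, the argument is to show that $\min\{r,\hat\alpha(r)\}$ is strictly increasing wherever $\hat\alpha$ is flat. If that turns out to be awkward, the fallback is to use $\tilde\alpha(r):=\tfrac12\hat\alpha(r)\cdot\frac{r}{1+r}\cdot$ suitably normalised instead. Any such replacement must keep the bound $\tilde\alpha\le\alpha$ and keep $\mathrm{Id}-\tilde\alpha$ strictly increasing, with a Lipschitz constant below $1$ for $\tilde\alpha$ being the easiest way to ensure the latter.
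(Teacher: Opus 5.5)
The paper gives no proof of this lemma; it imports it from Jiang--Wang. Your inf-convolution construction would therefore be a self-contained replacement, and most of it is right. Domination holds, $\hat\alpha(0)=0$, and the bound $\hat\alpha(r)\ge\min\{\hat\alpha_0(r/2),r/4\}$ gives unboundedness. Above all, the $\tfrac12$-Lipschitz property gives $(r_2-\hat\alpha(r_2))-(r_1-\hat\alpha(r_1))\ge\tfrac12(r_2-r_1)$, hence $\mathrm{Id}-\hat\alpha\in\mathcal{K}$.

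The gap is at the end: you never show that your final function is strictly increasing, and the patch you propose is vacuous. Since $\hat\alpha\le\hat\alpha_0\le\tfrac12\mathrm{Id}$, you have $\min\{r,\hat\alpha(r)\}=\hat\alpha(r)$ for every $r$. So $\tilde\alpha=\tfrac34\hat\alpha$, which is flat exactly where $\hat\alpha$ is. The fallback is left as ``suitably normalised'' and not verified. Item 4 is also loose as written. After restricting to $s\le r$ you argue that each term is increasing in $r$, but the index set $[0,r]$ grows with $r$, so this alone does not show the infimum is nondecreasing.

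The missing observation is that no patch is needed, because $\hat\alpha$ is already strictly increasing. For $s>r$ the term is at least $\hat\alpha_0(s)>\hat\alpha_0(r)$, so $\hat\alpha(r)=\min_{s\in[0,r]}\{\hat\alpha_0(s)+\tfrac12(r-s)\}$. The minimum is attained by continuity on a compact interval. Given $r<r'$, let $s^\ast$ be a minimiser for $r'$.
\begin{itemize}
\item If $s^\ast\le r$, then $\hat\alpha(r')=\hat\alpha_0(s^\ast)+\tfrac12(r-s^\ast)+\tfrac12(r'-r)\ge\hat\alpha(r)+\tfrac12(r'-r)>\hat\alpha(r)$.
\item If $s^\ast\in(r,r']$, then $\hat\alpha(r')\ge\hat\alpha_0(s^\ast)>\hat\alpha_0(r)\ge\hat\alpha(r)$, by strict monotonicity of $\hat\alpha_0$.
\end{itemize}
This case split replaces both item 4 and your final paragraph, and $\hat\alpha$ itself is then the required $\mathcal{K}_\infty$ function. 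Your fallback taken literally, $\tfrac12\hat\alpha(r)\tfrac{r}{1+r}$, would also work, since $\hat\alpha(r)\le r/2$ makes it strictly increasing and $\tfrac12$-Lipschitz. It is unnecessary, though.
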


\begin{proof}[Proof of the Theorem~\ref{thm:ISS_asymp}]
Let $R>0$ be chosen so that~\eqref{eq:sandwich}--\eqref{eq:decrease} hold on $\mathcal{B}_R(z^e)$, and let $L_V>0$ be a Lipschitz constant for $V$ on $\mathcal{B}_R(z^e)$. Write $\delta:=\sup_{j\ge 0}\|q_j\|$. Provided the trajectory stays in $\mathcal{B}_R(z^e)$, the Lipschitz property of $V$ gives
\begin{equation}\label{eq:onestep}
V(z_{k+1})
= V(w(z_k)+q_k)
\le V(w(z_k)) + L_V\|q_k\|.
\end{equation}
Combining~\eqref{eq:onestep} with the decrease condition~\eqref{eq:decrease} and the upper sandwich bound~\eqref{eq:sandwich} yields
\begin{equation}\label{eq:Vrecursion}
V(z_{k+1}) \le V(z_k) - \alpha_3(\|z_k-z^e\|) + L_V\delta
\le V(z_k) - \alpha_3\!\bigl(\alpha_2^{-1}(V(z_k))\bigr) + L_V\delta,
\end{equation}
where the second inequality uses $\|z_k-z^e\|\ge\alpha_2^{-1}(V(z_k))$ from the right-hand side of~\eqref{eq:sandwich}. Define $\rho:=\alpha_3\circ\alpha_2^{-1}$, which is class-$\KKinf$, and set $v_k:=V(z_k)$. Then~\eqref{eq:Vrecursion} reads
\[
v_{k+1}\le v_k - \rho(v_k) + L_V\delta.
\]
Define $\varphi(s):=s-\rho(s)$. Since $\rho$ is class-$\KKinf,$ using Lemma~\ref{lem:i-k}, without loss of generality we can say $\varphi\in\KK$, thus $\varphi$ is continuous, $\varphi(0)=0$, and $\mathrm{Id}-\varphi\in\KKinf.$ Thus, we have 
\[
v_{k+1}\le\varphi(v_k)+L_V\delta,
\]
which is exactly the setting of Lemma~\ref{lem:comparison} with $d=L_V\delta$. Since Theorem~\ref{thm:ISS_asymp} is concerned with properties on a compact set, 
let $v_0\in[0,\bar{R}],$ where $\bar{R}>0$ such that $\{z:V(z)\leq\bar{R}\}\subset \mathcal{B}_R(z^e)$. Applying part~(iii) of the lemma gives
a class-$\KL$ function $\hat\beta$ and a class-$\KK$ function $\hat\gamma$ such that
\[
V(z_k)\le\hat\beta(V(z_0),\,k)+\hat\gamma_{\bar{R}}(L_V\delta).
\]
According to
the sandwich bounds \eqref{eq:sandwich1} the inequalities
$V(z_0)\le\alpha_2(\|z_0-z^e\|)$ and $\|z_k-z^e\|\le\alpha_1^{-1}(V(z_k))$ are satisfied. Using Lemma~\ref{lem:addk} with $\varepsilon >0$ fixed, we obtain
\[
\|z_k-z^e\|
\le \alpha_1^{-1}\!\bigl((1+\epsilon)\,\hat\beta(\alpha_2(\|z_0-z^e\|),\,k)\bigr)
  + \alpha_1^{-1}\!\bigl((1+\tfrac{1}{\epsilon})\hat\gamma_{\bar{R}}(L_V\delta)\bigr).
\]
With the definitions
\[
\beta(s,k):=\alpha_1^{-1}\!\bigl((1+\epsilon)\hat\beta(\alpha_2(s),\,k)\bigr), \qquad
\gamma(s):=\alpha_1^{-1}\!\bigl((1+\tfrac{1}{\epsilon})\hat\gamma_{\bar{R}}(L_V s)\bigr),
\]
we see that $\beta$ is class-$\KL$ (as a composition of class-$\KKinf$ and class-$\KL$ functions) and $\gamma$ is class-$\KK$, giving
\[
\|z_k-z^e\|\le\beta(\|z_0-z^e\|,\,k)+\gamma\!\Bigl(\sup_{0\le j\le k}\|q_j\|\Bigr).
\]
 

To complete the proof, we 
need trajectories
to remain in $\{z \in \R^n :\ V(z)\le\bar{R}\}\subset\mathcal{B}_R(z^e)$.
From the 
proof of Lemma~\ref{lem:comparison}(iii), we know that $v_k\le\max(v_0,\sigma(L_V\delta))$,
so forward invariance of $\{z \in \R^n :\ V(z)\le\bar{R}\}$ holds provided
$\sigma(L_V\bar{q})\le\bar{R}$, i.e., $\bar{q}\le\rho(\bar{R})/L_V$.
Setting
\[
\bar r:=\alpha_2^{-1}(\bar{R}), \qquad \bar{q}:=\frac{\rho(\bar{R})}{L_V},
\]
we have that $\|z_0-z^e\|\le\bar r$ implies $V(z_0)\le\bar{R}$, and
$\sup_k\|q_k\|\le\bar{q}$ guarantees $\sigma(L_V\bar{q})=\bar{R}$,
so the sublevel set $\{z \in \R^n :\ V(z)\le\bar{R}\}$ is forward-invariant under
the perturbed dynamics. Hence, the proof is complete.
\end{proof}

\begin{lemma}\label{lm:fmu_sc}
Let $f$ be $\beta$-strongly convex. Then $f_\mu$ is $\beta$-strongly convex.    
\end{lemma}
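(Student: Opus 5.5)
The plan is to verify the strong convexity inequality for $f_\mu$ directly from that of $f$, by averaging over the Gaussian direction. By Definition~\ref{def:strong_convex}, for every fixed $u\in\R^n$ and all $x,y\in\R^n$, applying the inequality at the shifted points $x+\mu u$ and $y+\mu u$ gives
\begin{align*}
f(y+\mu u) \ge f(x+\mu u) + \nabla f(x+\mu u)^\top (y-x) + \tfrac{\beta}{2}\|y-x\|^2,
\end{align*}
since $(y+\mu u)-(x+\mu u)=y-x$. The key point is that the quadratic term does not depend on $u$, so it survives averaging unchanged.

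Next I take expectations with respect to $u\sim\mathcal{N}(0,\mathrm{I})$ on both sides. The left side becomes $f_\mu(y)$ and the first term on the right becomes $f_\mu(x)$. For the middle term, I need $E_u[\nabla f(x+\mu u)]=\nabla f_\mu(x)$, i.e., that differentiation and expectation can be exchanged. Since $f$ is $C^1$ with $L_1$-Lipschitz gradient (the standing assumption whenever this lemma is used), $\|\nabla f(x+\mu u)\|\le \|\nabla f(x)\|+L_1\mu\|u\|$. This bound is integrable against the Gaussian and locally uniform in $x$, so dominated convergence justifies the exchange. Equivalently, one can note that $f_\mu$ is differentiable, as stated earlier following \cite{nesterov_random_2017}, and identify its gradient with $E_u[\nabla f(x+\mu u)]$. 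The inner product with the fixed vector $y-x$ then commutes with the expectation by linearity.

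Combining these gives $f_\mu(y)\ge f_\mu(x)+\nabla f_\mu(x)^\top(y-x)+\frac{\beta}{2}\|y-x\|^2$, which is exactly $\beta$-strong convexity of $f_\mu$. Along the way I should also confirm that $f_\mu$ is finite; this follows from the quadratic upper bound implied by the Lipschitz gradient, which makes $f(x+\mu u)$ integrable.

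The only step with any real content is the interchange of gradient and expectation, together with integrability. I expect this to be routine under the Lipschitz-gradient assumption. If only $C^1$ were assumed, I would instead avoid gradients entirely: average the equivalent characterization that $f-\frac{\beta}{2}\|\cdot\|^2$ is convex. Convexity is preserved under Gaussian averaging, and $E_u\|x+\mu u\|^2=\|x\|^2+\mu^2 n$ differs from $\|x\|^2$ only by a constant, so $f_\mu-\frac{\beta}{2}\|\cdot\|^2$ is convex as well.
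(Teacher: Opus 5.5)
Your proposal is correct and matches the paper's proof: apply the strong convexity inequality at the shifted points $x+\mu u$ and $y+\mu u$, then take expectations over $u$. Your justification of the gradient--expectation interchange and of integrability, via the Lipschitz-gradient assumption, supplies details the paper leaves implicit.
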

\begin{proof}
From the strong convexity of $f$, we have $f(y)\geq f(x) +\langle \nabla f(x),y-x \rangle + \tfrac{\beta}{2}\|y-x\|^2$. Replacing $x$ and $y$ with $x+\mu u$ and $y+\mu u$, respectively, and taking expectation with respect to $u$, we obtain
\[f_\mu(y)\geq f_\mu(x) +\langle \nabla f_\mu(x),y-x \rangle + \tfrac{\beta}{2}\|y-x\|^2,\]
which completes the proof.
\end{proof}

\section{Proof of lemmas and theorems of Section~\ref{sec:main}}\label{app:main}

\begin{proof}[Proof of Lemma~\ref{lem:q_GDsc}]
     Consider $q_k$ given in \eqref{eq:gen_q_ZOGD}, which satisfies
    \begin{align}\label{eq:q_ZOGD_norm}
        \|q_k\|\leq h\|\nabla f(\bar{z}_k)- \nabla f_\mu(\bar{z}_k)\|+h\|\nabla f_\mu(\bar{z}_k) - E_{U_k}[\nabla f_\mu(\tilde{z}_k)]\|.
    \end{align}
    Using \cite[Lemma 3]{nesterov_random_2017}, we know that \( \|\nabla f(\bar{z}_k)- \nabla f_\mu(\bar{z}_k)\|\leq \tfrac{\mu}{2}L_1(n+3)^{3/2}.\) Thus, the first term
    on the right-hand side
    \eqref{eq:q_ZOGD_norm} is bounded and can be manipulated through
    the choice of $h$ and $\mu.$ For the second term on the right-hand side
    of \eqref{eq:q_ZOGD_norm}, we have 
    \begin{align}
        \|E_{U_k}[\nabla f_\mu(\tilde{z}_k)]- \nabla f_\mu(\bar{z}_k)\| &\leq \|E_{U_k}[\nabla f_\mu(\tilde{z}_k)- \nabla f_\mu(\bar{z}_k)]\| 
        \leq E_{U_k}[\|\nabla f_\mu(\tilde{z}_k)- \nabla f_\mu(\bar{z}_k)\|]\notag\\
        &\leq L_1E_{U_k}[\|\tilde{z}_k- \bar{z}_k\|]
    \end{align}
    Now let $e_k = \tilde{z}_k- \bar{z}_k$, i.e., we need to find an upper bound on
    $E[\|e_k\|]$ or $E[\|e_k\|^2]$ to prove the assertion. 
Combining \eqref{eq:ZOGD_alg}, \eqref{eq:ZOGD_avg} and \eqref{eq:gen_q_ZOGD}, we
    know that
    \begin{align*}
      e_{k+1} &= e_k -h g_\mu(\tilde{z}_k) +h E_{U_k}[\nabla f_\mu(\tilde{z}_k)] 
      \\&=e_k - h (g_\mu(\tilde{z}_k)- \nabla f_\mu(\tilde{z}_k)) -h (\nabla f_\mu(\tilde{z}_k) - E_{U_k}[\nabla f_\mu(\tilde{z}_k)]).
    \end{align*}
    To proceed, we
    denote $v_k = g_\mu(\tilde{z}_k)- \nabla f_\mu(\tilde{z}_k)$ and $l_k =  \nabla f_\mu(\tilde{z}_k) - E_{U_k}[\nabla f_\mu(\tilde{z}_k)]$ and thus
    $e_{k+1} = e_k -h (v_k+l_k).$ Considering 
    Assumption~\ref{lm20},  the variance of the random oracle is bounded, and we have $E[\|v_k\|^2]\leq\frac{\sigma^2}{t}$. Moreover, for $l_k$
    we have
    \begin{align}\label{eq:l}
        E_{U_k}[\|l_k\|] &=E_{U_k}[\|\nabla f_\mu(\tilde{z}_k)-\nabla f_\mu(\bar{z}_k)+\nabla f_\mu(\bar{z}_k)- E_{U_k}[\nabla f_\mu(\tilde{z}_k)]\|]
        \notag\\&\leq E_{U_k}[\|\nabla f_\mu(\tilde{z}_k)-\nabla f_\mu(\bar{z}_k)\|] + E_{U_k}[\|\nabla f_\mu(\bar{z}_k)-\nabla f_\mu(\tilde{z}_k)\|]\notag\\&\leq2L_1E_{U_k}[\|e_k\|]
    \end{align}
    Thus we get 
    \begin{align}
        E[\|e_{k+1}\|] \leq (1+2L_1h)E[\|e_k\|] +h \frac{\sigma}{\sqrt{t}},
    \end{align}
which does not provide a
uniform bound on $E_{U_k}[\|\tilde{z}_k- \bar{z}_k\|]$  yet.
To proceed, we use the strong convexity properties of $f$.
Focusing on
$E_{U_k}[\|e_k\|^2]$ and using the fact that 
$E[v_k] = 0,$ we get 
\begin{align}\label{eq:sceGD}
E_{U_k}[\|e_{k+1}\|^2] = E_{U_k}[\|e_k\|^2] + h^2E_{U_k}[\|v_k+l_k\|^2] - 2h E_{U_k}[\langle e_k , l_k \rangle].
\end{align}
Moreover, we have
\[E_{U_k}[\langle e_k , l_k \rangle] = E_{U_k}[\langle e_k , \nabla f_\mu(\tilde{z}_k)\rangle]\]
since \(E_{U_k}[\langle e_k , E_{U_k}[\nabla f_\mu(\tilde{z}_k)] \rangle] = 0.\) Now,
the strong convexity of $f$ and consequently strong convexity of $f_\mu$ (Lemma~\ref{lm:fmu_sc}) implies that
\[f_\mu(\bar{z}_k)\geq f_\mu(\tilde{z}_k) + \langle \nabla f_\mu(\tilde{z}_k), \bar{z}_k-\tilde{z}_k\rangle + \frac{\beta}{2}\| \bar{z}_k-\tilde{z}_k\|^2\]
or
\[E_{U_k}[\langle e_k , \nabla f_\mu(\tilde{z}_k)\rangle] \geq E_{U_k}[f_\mu(\tilde{z}_k)]-f_\mu(\bar{z}_k)  + \frac{\beta}{2}E_{U_k}[\|e_k\|^2].\]
Moreover, combining
strong convexity of $f_\mu$ and  
Jensen's inequality, leads to the estimates
\begin{align*}
    E_{U_k}[f_\mu(\tilde{z}_k)]\geq f_\mu(E_{U_k}[\tilde{z}_k])\geq f_\mu(\bar{z}_k), 
\end{align*}
and hence
\begin{align}
    E_{U_k}[\langle e_k , \nabla f_\mu(\tilde{z}_k)\rangle] \geq \frac{\beta}{2}E_{U_k}[\|e_k\|^2]. \label{eq:random_est_strong_convex}
\end{align}
Thus, substituting \eqref{eq:random_est_strong_convex} 
in \eqref{eq:sceGD}, we have
\begin{align}\label{eq:sceGD1}
E_{U_k}[\|e_{k+1}\|^2] \leq (1-h\beta)E_{U_k}[\|e_k\|^2] + h^2E_{U_k}[\|v_k+l_k\|^2].
\end{align}

We know that $\|v_k+l_k\|^2\leq2\|v_k\|^2+2\|l_k\|^2$ and $E[\|v_k\|^2]\leq\frac{\sigma^2}{t}.$ Similar to the estimates 
in \eqref{eq:l}, we can hence conclude the following chain of equalities and inequalities:
\begin{align}
\begin{split}\label{eq:l2}
    E[\|l_k\|^2] &= E_{U_k}[\|\nabla f_\mu(\tilde{z}_k)-\nabla f_\mu(\bar{z}_k)+\nabla f_\mu(\bar{z}_k)- E_{U_k}[\nabla f_\mu(\tilde{z}_k)]\|^2]\\&
    =E[\|\nabla f_\mu(\tilde{z}_k)-\nabla f_\mu(\bar{z}_k)\|^2+\|\nabla f_\mu(\bar{z}_k)- E_{U_k}[\nabla f_\mu(\tilde{z}_k)]\|^2]\\&\qquad
    -2E[\langle\nabla f_\mu(\tilde{z}_k)-\nabla f_\mu(\bar{z}_k),E_{U_k}[\nabla f_\mu(\tilde{z}_k)]-\nabla f_\mu(\bar{z}_k)\rangle]\\
    &\leq E[\|\nabla f_\mu(\tilde{z}_k)-\nabla f_\mu(\bar{z}_k)\|^2]\\&\leq L_1^2E[\|e_k\|^2].
\end{split}
\end{align}
Thus
\begin{align}\label{eq:sceGD2}
E_{U_k}[\|e_{k+1}\|^2] \leq \rho E_{U_k}[\|e_k\|^2] + \frac{2h^2\sigma^2}{t}
\end{align}
and where $\rho$ is defined as 
$\rho=1-h\beta+2L_1^2h^2$. To have $\rho<1,$ we require $h<\frac{\beta}{2L_1^2}.$ Since $e_0=0,$ by induction we get 
\begin{align}\label{eq:sceGD3}
E_{U_k}[\|e_{k}\|^2] \leq \frac{2h^2\sigma^2}{t(1-\rho)} = \frac{2h^2\sigma^2}{t(h\beta-2L_1^2h^2)} = \frac{2h\sigma^2}{t(\beta-2L_1^2h)}.
\end{align}
Hence, combining all the estimates above, we get
\begin{align*}
\sup_k \|q_k\| \leq h \frac{\mu}{2}L_1(n+3)^{3/2}+h L_1 \sqrt{\frac{2h\sigma^2}{t(\beta-2L_1^2h)}}
\end{align*}
which completes the proof.
\end{proof}
\begin{proof}[Proof of Theorem~\ref{th:GDsc}]
By Lemma~\ref{lem:q_GDsc}, the averaged ZO-GD dynamics~\eqref{eq:ZOGD_avg} take the form of the perturbed system~\eqref{eq:dt_perturbed} with perturbation satisfying the bound~\eqref{eq:q_bound_GDsc}. It remains to verify that the unperturbed GD dynamics satisfy the hypotheses of Theorem~\ref{thm:ISS_exp}.

For $\beta$-strongly convex $f$ with $L_1$-Lipschitz gradients and $h \in \bigl(0, \frac{2}{\beta + L_1}\bigr)$, from~\cite[Thm.~2.1.15]{nesterov2018lectures} we have
\[
\|z_{k+1} - z^*\|^2 \leq \Bigl(1 - \frac{2h\beta L_1}{\beta + L_1}\Bigr)\|z_k - z^*\|^2,
\]
where $z^* = \arg\min_{x\in \R^n} f(x)$. Consider the Lyapunov function $V(z) = \|z - z^*\|^2$. Then~\eqref{eq:sandwich} holds with $c_1 = c_2 = 1$, and the above contraction gives~\eqref{eq:decrease} with $c = \frac{2h\beta L_1}{\beta + L_1} \in (0,1)$. All hypotheses of Theorem~\ref{thm:ISS_exp} are therefore satisfied.

The ISS bound~\eqref{eq:ISS_bound} then guarantees that ZO-GD converges to a neighbourhood of $z^*$ with the same exponential rate $(1 - c)^{k/2}$ as GD. The radius of this neighbourhood is governed by $\gamma\bigl(\sup_k \|q_k\|\bigr)$, which, combined with the perturbation bound from Lemma~\ref{lem:q_GDsc}, yields~\eqref{eq:r_GDsc}. Since the bound~\eqref{eq:q_bound_GDsc} can be made smaller than $\bar{q}$ by choosing $\mu$ sufficiently small and $t$ sufficiently large, such parameter choices always exist.
\end{proof}

\begin{proof}[Proof of Lemma~\ref{lem:q_HBsc}]
Consider $q_k$ given in \eqref{eq:gen_q_unified} for HB, which satisfies
\begin{equation}\label{eq:qk_split}
\|q_k\| \;\leq\; h_1 \bigl\|
  E_{U_k}[\nabla f_\mu(x_k) - \nabla f(x_k)]
\bigr\|
+ h_1 \bigl\|
  E_{U_k}[\nabla f(x_k)] - \nabla f(\bar{x}_k)
\bigr\|.
\end{equation}
Using \cite[Lem.~3]{nesterov_random_2017}, we know that $\|\nabla f_\mu(x) - \nabla f(x)\| \leq \frac{\mu}{2}\, L_1\,(n+3)^{3/2}$ uniformly in $x$. Thus, the first term in the right-hand side \eqref{eq:qk_split} is bounded and can be ensured to be arbitrarily small by selecting
$h_1$ and $\mu$ appropriately. For the second term in the right-hand side of \eqref{eq:qk_split}, by $L_1$-Lipschitz continuity of $\nabla f$ and Jensen's inequality, we have
\begin{equation}\label{eq:nonlin_bound}
\bigl\| E_{U_k}[\nabla f(x_k)] - \nabla f(\bar{x}_k) \bigr\|
\;\leq\; L_1\, E_{U_k}[\|x_k - \bar{x}_k\|].
\end{equation}
Now let $e_k = z_k - \bar{z}_k$ so that $x_k - \bar{x}_k = T e_k$ where $T = [0 \;\; I_n]$. Thus we are interested in bounding $E[\|Te_k\|]$ or $E[\|Te_k\|^2]$. Subtracting the averaged dynamics \eqref{eq:avg_HBNAG_perturbed} (for HB) from the ZO-HB update gives
\begin{equation}\label{eq:error_dyn}
e_{k+1} = A\, e_k - h_1 
  \begin{bmatrix} 0 \\ v_k + l_k \end{bmatrix},
\end{equation}
where $v_k = g_\mu(x_k) - \nabla f_\mu(x_k)$ with $E[v_k] = 0$, and $l_k = \nabla f_\mu(x_k) - E_{U_k}[\nabla f_\mu(x_k)]$. As a next step, let
$x_k^e = Te_k$. From the lower block of \eqref{eq:error_dyn} we have
\begin{equation}\label{eq:xke_recur}
x_{k+1}^e = (1+h_2)\,x_k^e - h_2\,x_{k-1}^e - h_1(v_k + l_k).
\end{equation}
Let $a_k = E[\|x_k^e\|^2]$. Squaring \eqref{eq:xke_recur}, taking expectations, and using $E[\langle y, v_k\rangle] = 0$ for any $y$ independent of $u_k$, leads to the expression
\begin{align}\label{eq:ak_expand}
a_{k+1} 
&= E\bigl[\|(1+h_2)x_k^e - h_2 x_{k-1}^e\|^2\bigr]
 + h_1^2\,E[\|v_k+l_k\|^2]
 - 2h_1\,E\bigl[\langle (1+h_2)x_k^e - h_2 x_{k-1}^e,\; l_k\rangle\bigr].
\end{align}
We derive bounds for each term in \eqref{eq:ak_expand} separately.
For the deterministic quadratic term, expanding and applying Young's inequality provides the upper bound
\begin{align}\label{eq:det_bound}
\|(1+h_2)x_k^e - h_2 x_{k-1}^e\|^2
&= (1+h_2)^2\|x_k^e\|^2 + h_2^2\|x_{k-1}^e\|^2
  - 2h_2(1+h_2)\langle x_{k-1}^e, x_k^e\rangle \notag\\
&\leq (1+h_2)(1+2h_2)\,\|x_k^e\|^2
  + h_2(1+2h_2)\,\|x_{k-1}^e\|^2.
\end{align}
For the inner product, since 
$E[x_k^e] = 0$ and $E_{U_k}[\nabla f_\mu(x_k)]$ is deterministic, we get $E[\langle x_k^e, l_k\rangle] = E[\langle x_k - \bar{x}_k, \nabla f_\mu(x_k)\rangle]$. By $\beta$-strong convexity of $f_\mu$ (from Lemma~\ref{lm:fmu_sc}), the inequality
\[
\langle \nabla f_\mu(x_k),\, x_k - \bar{x}_k \rangle
\;\geq\; f_\mu(x_k) - f_\mu(\bar{x}_k) + \frac{\beta}{2}\,\|x_k - \bar{x}_k\|^2
\]
is thus satisfied.
Taking expectations and applying Jensen's inequality to
$E[f_\mu(x_k)] \geq f_\mu(\bar{x}_k)$ then yields
\begin{equation}\label{eq:sc_inner}
E[\langle x_k^e, l_k\rangle] \;\geq\; \tfrac{\beta}{2}\,a_k.
\end{equation}
For the cross term $E[\langle x_{k-1}^e, l_k\rangle]$, using $E[\|l_k\|] \leq 2L_1\,E[\|x_k^e\|]$ (from $L_1$-smoothness of $f_\mu$ and the triangle inequality) and Young's inequality, we know that
\begin{equation}\label{eq:mom_cross}
|E[\langle x_{k-1}^e, l_k\rangle]|
\;\leq\; L_1\bigl(a_{k-1} + a_k\bigr).
\end{equation}
Since we assume that the variance of the random oracle is bounded by 
$E[\|v_k\|^2] \leq \sigma^2/t$, for
$l_k$, similar to \eqref{eq:l2}, it holds that $E[\|l_k\|^2] \leq L_1^2\, a_k$ and thus
$E[\|v_k+l_k\|^2] \leq \frac{2\sigma^2}{t} + 2L_1^2\, a_k$.
 
Substituting all bounds into \eqref{eq:ak_expand} and letting 
\begin{align}
\phi &= (1+h_2)(1+2h_2) - h_1(1+h_2)\beta + 2h_1h_2 L_1 + 2h_1^2 L_1^2, \\
c_0 &= h_2(1+2h_2) + 2h_1h_2 L_1,
\end{align}
we get the two-step recursion
\begin{equation}\label{eq:two_step}
a_{k+1} \;\leq\; \phi\, a_k + c_0\, a_{k-1} + \frac{2h_1^2\sigma^2}{t}.
\end{equation}
As a next step, we define
$m_k = \max\{a_k, a_{k-1}\}$. Since $a_k \leq m_k$ and $a_{k-1} \leq m_k$, \eqref{eq:two_step} gives $a_{k+1} \leq (\phi + c_0)\,m_k + \frac{2h_1^2\sigma^2}{t}$. Since additionally 
$a_k \leq m_k$ holds, $m_{k+1}$ can be upper bounded by
\begin{equation}\label{eq:mk_recur2}
m_{k+1} = \max\{a_{k+1}, a_k\} \;\leq\; \max\left\{(\phi+c_0)\, m_k + \frac{2h_1^2\sigma^2}{t},m_k \right\},
\end{equation}
From 
$m_0 = 0$ (as $e_0 = 0$), we get 
\begin{equation}\label{eq:mk_recur}
m_{k+1} = \max\{a_{k+1}, a_k\} \;\leq\; (\phi+c_0)\, m_k + \frac{2h_1^2\sigma^2}{t},
\end{equation}
which is a standard one-step contraction provided $\rho_{\mathrm{HB}} := \phi + c_0 < 1$. Computing $\rho_{\mathrm{HB}}$ explicitly, we get
\begin{equation}\label{eq:rho_HB}
\rho_{\mathrm{HB}} = \phi + c_0 = (1+2h_2)^2 - h_1(1+h_2)\beta + 4h_1h_2 L_1 + 2h_1^2 L_1^2.
\end{equation}
Thus $\rho_{\mathrm{HB}} < 1$ requires
\begin{equation}\label{eq:param_cond}
h_1(1+h_2)\beta > 4h_2 + 4h_2^2 + 4h_1h_2 L_1 + 2h_1^2 L_1^2
\end{equation}
to hold. To find a sufficient condition for \eqref{eq:param_cond} to be satisfied let $h_2\in(0,ch_1)$. Then it is sufficient to have 
\[
h_1\beta>2h_1^2L_1^2+4ch_1+4c^2h_1^2+4ch_1^2L_1 \quad \text{or equivalently} \quad \beta>2h_1L_1^2+4c+4c^2h_1+4ch_1L_1.
\]
Thus we need $c\in(0,\frac{\beta}{4})$ and $h_1\in(0,\frac{\beta-4c}{4c^2+4cL_1+2L_1^2})$ to guarantee that \eqref{eq:param_cond} is satisfied, providing the bounds stated in Lemma \ref{lem:q_HBsc}.

Since $m_0 = 0$ (as $e_0 = 0$), iterating \eqref{eq:mk_recur} leads to
\[
\sup_k\, a_k \;\leq\; \sup_k\, m_k \;\leq\; \frac{2h_1^2\sigma^2}{t(1-\rho_{\mathrm{HB}})}
\]
and therefore, by Jensen's inequality we have
\begin{equation}\label{eq:Te_uniform}
\sup_k\,E[\|x_k - \bar{x}_k\|]
\;\leq\; \sup_k\sqrt{a_k}
\;\leq\; \frac{h_1\sigma}{\sqrt{t}}\,
  \sqrt{\frac{2}{1-\rho_{\mathrm{HB}}}}.
\end{equation}
Combining \eqref{eq:qk_split}, the smoothing bias bound, \eqref{eq:nonlin_bound}, and \eqref{eq:Te_uniform} we can thus conclude that
\begin{align}
\sup_k\,\|q_k\|
&\leq \frac{h_1\mu\,L_1(n+3)^{3/2}}{2}
  + h_1 L_1 \cdot \frac{h_1\sigma}{\sqrt{t}}\,
    \sqrt{\frac{2}{1-\rho_{\mathrm{HB}}}} \notag\\[4pt]
&= \frac{h_1\mu\,L_1(n+3)^{3/2}}{2}
  + \frac{h_1^2 L_1\,\sigma}{\sqrt{t}}\,
    \sqrt{\frac{2}{1-\rho_{\mathrm{HB}}}}\,,
\end{align}
which completes the proof.
\end{proof}
\begin{proof}[Proof of Theorem~\ref{th:HBsc}]
By Lemma~\ref{lem:q_HBsc}, the averaged ZO-HB dynamics~\eqref{eq:avg_HBNAG_perturbed} take the form of the perturbed system~\eqref{eq:dt_perturbed} with bounded perturbation. For $\beta$-strongly convex $f$ with $L_1$-Lipschitz gradients and parameters satisfying the hypotheses of Lemma~\ref{lem:q_HBsc}, HB locally converges to the minimiser $z^*$ exponentially fast~\cite{polyak1964some}. By the converse Lyapunov theorem for exponentially stable discrete-time systems (see, e.g.,~\cite[Thm.~5.6]{kellett2023introduction}), there exists a Lyapunov function $V$ satisfying~\eqref{eq:sandwich}--\eqref{eq:decrease} on some ball $\mathcal{B}_R(z^e)$, $R>0$, in the augmented state space $\mathbb{R}^{2n}$. All hypotheses of Theorem~\ref{thm:ISS_exp} are therefore satisfied, and the conclusion follows by the same argument as in the proof of Theorem~\ref{th:GDsc}.
\end{proof}

\begin{proof}[Proof of Lemma~\ref{lem:q_NAGsc}]
Consider $q_k$ given in \eqref{eq:gen_q_unified} for NAG, which satisfies
\begin{equation}\label{eq:qk_split_NAG}
\|q_k\| \leq h_1 \bigl\| E_{U_k}[\nabla f_\mu(TAz_k) - \nabla f(TAz_k)] \bigr\| + h_1 \bigl\| E_{U_k}[\nabla f(TAz_k)] - \nabla f(TA\bar{z}_k) \bigr\|.
\end{equation}
Using \cite[Lem.~3]{nesterov_random_2017}, we know that $\|\nabla f_\mu(x) - \nabla f(x)\| \leq \frac{\mu}{2}L_1(n+3)^{3/2}$ uniformly in $x$. Thus, the first term in the right-hand side \eqref{eq:qk_split_NAG} is bounded and can be controlled by the choice of $h_1$ and $\mu$. For the second term in the right-hand side of \eqref{eq:qk_split_NAG}, by $L_1$-Lipschitz continuity of $\nabla f$ and Jensen's inequality, we have
\begin{equation}\label{eq:nonlin_bound_NAG}
\bigl\| E_{U_k}[\nabla f(TAz_k)] - \nabla f(TA\bar{z}_k) \bigr\| \leq L_1\, E_{U_k}[\|TAz_k - TA\bar{z}_k\|] = L_1\, E_{U_k}[\|TAe_k\|],
\end{equation}
where $e_k = z_k - \bar{z}_k$. Thus, we need to derive an upper bound on 
$E[\|TAe_k\|]$ or $E[\|TAe_k\|^2]$. Let $x_k^e = Te_k$ and define the lookahead error $y_k^e = TAe_k = (1+h_2)x_k^e - h_2 x_{k-1}^e$. Subtracting the averaged dynamics \eqref{eq:avg_HBNAG_perturbed} (for NAG) from the ZO-NAG update gives
\begin{equation}\label{eq:error_dyn_NAG}
e_{k+1} = A\,e_k - h_1 \begin{bmatrix} 0 \\ v_k + l_k \end{bmatrix},
\end{equation}
where $v_k = g_\mu(TAz_k) - \nabla f_\mu(TAz_k)$ with $E[v_k] = 0$, and $l_k = \nabla f_\mu(TAz_k) - E_{U_k}[\nabla f_\mu(TAz_k)]$. From the lower block of \eqref{eq:error_dyn_NAG} we obtain
\begin{equation}\label{eq:xke_recur_NAG}
x_{k+1}^e = y_k^e - h_1(v_k + l_k).
\end{equation}
Let $a_k = E[\|x_k^e\|^2]$ and $b_k = E[\|y_k^e\|^2]$. Squaring \eqref{eq:xke_recur_NAG}, taking expectations, and using the fact that $E[\langle y_k^e, v_k\rangle] = 0$ leads to the equation
\begin{equation}\label{eq:ak_expand_NAG}
a_{k+1} = b_k + h_1^2\,E[\|v_k + l_k\|^2] - 2h_1\,E[\langle y_k^e,\, l_k\rangle].
\end{equation}
To proceed, we derive upper bounds for each term in the right-hand side.
For the inner product, since $E[y_k^e] = 0$ and $E_{U_k}[\nabla f_\mu(TAz_k)]$ is deterministic, we get $E[\langle y_k^e, l_k\rangle] = E[\langle TAe_k, \nabla f_\mu(TAz_k)\rangle]$. By $\beta$-strong convexity of $f_\mu$ (Lemma~\ref{lm:fmu_sc}), it additionally holds that
\[
\langle \nabla f_\mu(TAz_k),\, TAz_k - TA\bar{z}_k\rangle \geq f_\mu(TAz_k) - f_\mu(TA\bar{z}_k) + \frac{\beta}{2}\|TAe_k\|^2.
\]
Taking expectations and applying Jensen's inequality to
$E[f_\mu(TAz_k)] \geq f_\mu(E[TAz_k]) = f_\mu(TA\bar{z}_k)$ allows us to write
\begin{equation}\label{eq:sc_inner_NAG}
E[\langle y_k^e, l_k\rangle] \geq \frac{\beta}{2}\,b_k.
\end{equation}
This is the key advantage of the NAG structure compared to the HB structure. The strong convexity contraction acts directly on $b_k = E[\|y_k^e\|^2]$, which is the same quantity that appears with a positive sign in \eqref{eq:ak_expand_NAG}. There is no separate momentum cross term to bound.
 
For the noise, since $l_k = \nabla f_\mu(TAz_k) - E_{U_k}[\nabla f_\mu(TAz_k)]$, using $L_1$-smoothness and similar to the process in \eqref{eq:l} and \eqref{eq:l2}, $E[\|l_k\|] \leq 2L_1\,E[\|y_k^e\|]$, so $E[\|l_k\|^2] \leq L_1^2\,b_k$ and with 
$E[\|v_k\|^2] \leq \sigma^2/t$, we get $E[\|v_k + l_k\|^2] \leq \frac{2\sigma^2}{t} + 2L_1^2\,b_k$.
Substituting these estimates into \eqref{eq:ak_expand_NAG} provides the upper bound
\begin{equation}\label{eq:ak_bk_NAG}
a_{k+1} \leq (1 - h_1\beta + 2h_1^2 L_1^2)\,b_k + \frac{2h_1^2\sigma^2}{t}.
\end{equation}
Let $\psi = 1 - h_1\beta + 2h_1^2 L_1^2$. For the quadratic term $b_k = E[\|y_k^e\|^2]$, expanding and applying Young's inequality exactly as in \eqref{eq:det_bound} yields
\begin{equation}\label{eq:bk_bound_NAG}
b_k \leq (1+h_2)(1+2h_2)\,a_k + h_2(1+2h_2)\,a_{k-1}.
\end{equation}
Substituting \eqref{eq:bk_bound_NAG} into \eqref{eq:ak_bk_NAG} and letting
\begin{align*}
\phi &= \psi\,(1+h_2)(1+2h_2), \\
c_0 &= \psi\,h_2(1+2h_2),
\end{align*}
we get the two-step recursion
\begin{equation}\label{eq:two_step_NAG}
a_{k+1} \leq \phi\,a_k + c_0\,a_{k-1} + \frac{2h_1^2\sigma^2}{t}.
\end{equation}
Next, we define $m_k = \max\{a_k, a_{k-1}\}$. Since $a_k \leq m_k$ and $a_{k-1} \leq m_k$, \eqref{eq:two_step_NAG} gives $a_{k+1} \leq (\phi + c_0)\,m_k + \frac{2h_1^2\sigma^2}{t}$ and from 
$a_k \leq m_k$ we have
\begin{equation}\label{eq:mk_recur_NAG2}
m_{k+1} = \max\{a_{k+1}, a_k\} \leq \max\bigl\{(\phi+c_0)\,m_k + \tfrac{2h_1^2\sigma^2}{t},\; m_k\bigr\}.
\end{equation}
Since $m_0 = 0$ (as $e_0 = 0$), we get
\begin{equation}\label{eq:mk_recur_NAG}
m_{k+1} \leq (\phi + c_0)\,m_k + \frac{2h_1^2\sigma^2}{t},
\end{equation}
which is a standard one-step contraction provided $\rho_{\mathrm{NAG}} := \phi + c_0 < 1$ and computing $\rho_{\mathrm{NAG}}$ 
explicitly yields
\begin{equation}\label{eq:rho_NAG}
\rho_{\mathrm{NAG}} = \psi\,(1+2h_2)^2 = (1 - h_1\beta + 2h_1^2 L_1^2)(1+2h_2)^2.
\end{equation}
Thus $\rho_{\mathrm{NAG}} < 1$ requires $\psi(1+2h_2)^2 < 1$, i.e.,
\begin{equation}\label{eq:param_cond_NAG}
(1+2h_2)^2 < \frac{1}{1-h_1\beta + 2h_1^2 L_1^2}.
\end{equation}
Choosing $h_1 \in (0, \frac{\beta}{2L_1^2})$ ensures $\psi \in (0,1)$, and then any $h_2 \in (0, \frac{1}{4}(\psi^{-1/2} - 1))$ guarantees \eqref{eq:param_cond_NAG}.
 
Since $m_0 = 0$, iterating \eqref{eq:mk_recur_NAG}, we then have
\[
\sup_k\, a_k \leq \sup_k\, m_k \leq \frac{2h_1^2\sigma^2}{t(1-\rho_{\mathrm{NAG}})}.
\]
To bound $E[\|TAe_k\|]$, note that $\|TAe_k\|^2 = \|y_k^e\|^2 = b_k$, and from \eqref{eq:bk_bound_NAG} we obtain
\[
\sup_k\, E[\|y_k^e\|^2] \leq (1+2h_2)^2 \sup_k m_k \leq \frac{2(1+2h_2)^2h_1^2\sigma^2}{t(1-\rho_{\mathrm{NAG}})}.
\]
Finally, by
Jensen's inequality,
\begin{equation}\label{eq:TAe_uniform}
\sup_k\, E[\|TAe_k\|] \leq \frac{(1+2h_2)h_1\sigma}{\sqrt{t}}\sqrt{\frac{2}{1-\rho_{\mathrm{NAG}}}},
\end{equation}
and combining 
\eqref{eq:qk_split_NAG}, the smoothing bias bound, \eqref{eq:nonlin_bound_NAG}, and \eqref{eq:TAe_uniform} it holds that
\begin{align}
\sup_k\,\|q_k\|
&\leq \frac{h_1\mu\,L_1(n+3)^{3/2}}{2} + h_1 L_1 \cdot \frac{(1+2h_2)h_1\sigma}{\sqrt{t}}\sqrt{\frac{2}{1-\rho_{\mathrm{NAG}}}} \notag\\[4pt]
&= \frac{h_1\mu\,L_1(n+3)^{3/2}}{2} + \frac{(1+2h_2)h_1^2 L_1\sigma}{\sqrt{t}}\sqrt{\frac{2}{1-\rho_{\mathrm{NAG}}}}\,,
\end{align}
which completes the proof.
\end{proof}

\begin{proof}[Proof of Theorem~\ref{th:NAGsc}]
By Lemma~\ref{lem:q_NAGsc}, the averaged ZO-NAG dynamics~\eqref{eq:avg_HBNAG_perturbed} take the form of the perturbed system~\eqref{eq:dt_perturbed} with bounded perturbation. For $\beta$-strongly convex $f$ with $L_1$-Lipschitz gradients and parameters satisfying the hypotheses of Lemma~\ref{lem:q_NAGsc}, NAG converges to the minimiser $z^e$
exponentially fast~\cite{nesterov2018lectures}. By the converse Lyapunov theorem for exponentially stable discrete-time systems (see, e.g.,~\cite[Thm.~5.6]{kellett2023introduction}), there exists a Lyapunov function $V$ satisfying~\eqref{eq:sandwich}, \eqref{eq:decrease} on some ball $\mathcal{B}_R(z^e)$, $R>0$, in the augmented state space $\mathbb{R}^{2n}$. All hypotheses of Theorem~\ref{thm:ISS_exp} are therefore satisfied, and the conclusion follows by the same argument as in the proof of Theorem~\ref{th:GDsc}.
\end{proof}
\begin{proof}[Proof of Lemma~\ref{lem:q_GDreg}]
     Consider $q_k$ given in \eqref{eq:gen_q_ZOGD_reg}, which satisfies
    \begin{align}\label{eq:q_ZOGD_norm_reg}
        \|q_k\|\leq h\|\nabla f(\bar{z}_k)- \nabla f_\mu(\bar{z}_k)\|+h\|\nabla f_\mu(\bar{z}_k) - E_{U_k}[\nabla f_\mu(\tilde{z}_k)]\|
    \end{align}
    Using \cite[Lem. 3]{nesterov_random_2017}, we know that \( \|\nabla f(\bar{z}_k)- \nabla f_\mu(\bar{z}_k)\|\leq \tfrac{\mu}{2}L_1(n+3)^{3/2}.\) Thus, the first term in the right-hand side of \eqref{eq:q_ZOGD_norm_reg} is bounded and can be manipulated by choosing
    $h$ and $\mu$ appropriately. For the second term in the right-hand side of \eqref{eq:q_ZOGD_norm_reg}, we have 
    \begin{align}
        \|E_{U_k}[\nabla f_\mu(\tilde{z}_k)]- \nabla f_\mu(\bar{z}_k)\| &\leq \|E_{U_k}[\nabla f_\mu(\tilde{z}_k)- \nabla f_\mu(\bar{z}_k)]\|\notag\\
        &\leq E_{U_k}[\|\nabla f_\mu(\tilde{z}_k)- \nabla f_\mu(\bar{z}_k)\|]\notag\\
        &\leq L_1E_{U_k}[\|\tilde{z}_k- \bar{z}_k\|]
    \end{align}
    Now, let $e_k = \tilde{z}_k- \bar{z}_k.$ As a next step, we derive an upper bound on
    $E[\|e_k\|^2].$ We know that 
    \begin{align*}
        e_{k+1} &= ce_k -h g_\mu(\tilde{z}_k) +h E_{U_k}[\nabla f_\mu(\tilde{z}_k)] \\&=e_k - h (g_\mu(\tilde{z}_k)- \nabla f_\mu(\tilde{z}_k)) -h (\nabla f_\mu(\tilde{z}_k) - E_{U_k}[\nabla f_\mu(\tilde{z}_k)])
    \end{align*}
    To proceed, we use the notation
    $v_k = g_\mu(\tilde{z}_k)- \nabla f_\mu(\tilde{z}_k)$ and $l_k =  \nabla f_\mu(\tilde{z}_k) - E_{U_k}[\nabla f_\mu(\tilde{z}_k)]$ and thus 
    $e_{k+1} = ce_k -h (v_k+l_k).$ We assume the variance of the random oracle is bounded and get $E[\|v_k\|^2]\leq\frac{\sigma^2}{t}.$ Moreover, for $l_k$
    the following chain of inequalities is satisfied:
    \begin{align}\label{eq:l_reg}
        E[\|l_k\|] \leq E[\|\nabla f_\mu(\tilde{z}_k)-\nabla f_\mu(\bar{z}_k)\|] + E[\|\nabla f_\mu(\bar{z}_k)-\nabla f_\mu(\tilde{z}_k)\|]\leq2L_1E[\|e_k\|].
    \end{align}
From the fact that
$E[v_k] = 0,$ we 
get the following bound on $E_{U_k}[\|e_{k+1}\|^2]$,
\begin{align}\label{eq:GD_reg}
E_{U_k}[\|e_{k+1}\|^2] &= c^2E_{U_k}[\|e_k\|^2] + h^2E_{U_k}[\|v_k+l_k\|^2] - 2ch E_{U_k}[\langle e_k , l_k \rangle]\\
&\leq c^2E_{U_k}[\|e_k\|^2] + h^2E_{U_k}[\|v_k+l_k\|^2] + 2ch E_{U_k}[\|e_k\|\|l_k\|]\\
&\leq (c^2+4cL_1h)E_{U_k}[\|e_k\|^2] + h^2E_{U_k}[\|v_k+l_k\|^2],
\end{align}
where the last inequality uses Jensen's inequality and steps similar to same as
\eqref{eq:l_reg}.
Moreover, we know that $\|v_k+l_k\|^2\leq2\|v_k\|^2+2\|l_k\|^2$ and $E[\|v_k\|^2]\leq\frac{\sigma^2}{t}.$ Similar to the process in \eqref{eq:l_reg} and considering the fact that $E[\|X-E[X]\|^2]\leq E[\|X\|^2]$, we have
$E[\|l_k\|^2] \leq L_1^2E[\|e_k\|^2]$ and thus
\begin{align}\label{eq:GD_reg2}
E_{U_k}[\|e_{k+1}\|^2] \leq \rho E_{U_k}[\|e_k\|^2] + \frac{2h^2\sigma^2}{t},
\end{align}
where $\rho=c^2+4cL_1h+2L_1^2h^2$. For 
$c\in[0,1)$, to 
have $\rho<1,$ we require $h\in\bigl(0,\frac{\sqrt{2(1+c^2)}-2c}{2L_1}\bigr).$ Since $e_0=0,$ by induction we get 
\begin{align}\label{eq:GD_reg3}
E_{U_k}[\|e_{k}\|^2] \leq \frac{2h^2\sigma^2}{t(1-\rho)} = \frac{2h^2\sigma^2}{t(1-c^2-4cL_1h-2L_1^2h^2)}.
\end{align}
Hence, combining the derivations above, we get
\begin{align}
\sup_k \|q_k\| \leq h \frac{\mu}{2}L_1(n+3)^{3/2}+h L_1 \sqrt{\frac{2h^2\sigma^2}{t(1-c^2-4cL_1h-2L_1^2h^2)}}
\end{align} 
which completes the proof.
\end{proof}

\begin{proof}[Proof of Theorem~\ref{th:GD_reg}]
By Lemma~\ref{lem:q_GDreg}, the averaged ZO-GD dynamics~\eqref{eq:ZOGD_avg_reg} take the form of the perturbed system~\eqref{eq:dt_perturbed} with perturbation satisfying the bound~\eqref{eq:q_bound_GD_reg}. It remains to verify that the unperturbed regularised GD dynamics~\eqref{eq:GD_alg_reg} satisfy the hypotheses of Theorem~\ref{thm:ISS_asymp}.

By assumption, GD applied to the regularised problem converges to a fixed point $z^e$. Since we have convergence for GD, the converse Lyapunov theorem for asymptotically stable discrete-time systems (see, e.g.,~\cite[Thm.~5.5 and Thm.~5.6]{kellett2023introduction})
guarantees the existence of a Lyapunov function $V$ satisfying~\eqref{eq:sandwich1}--\eqref{eq:decrease1} on some ball $\mathcal{B}_R(z^e)$, $R>0$. All hypotheses of Theorem~\ref{thm:ISS_asymp} are therefore satisfied.

The ISS bound~\eqref{eq:ISS} then guarantees that ZO-GD converges to a neighbourhood of $z^e$ with the same transient decay profile as GD. The radius of this neighbourhood is governed by $\gamma\bigl(\sup_k \|q_k\|\bigr)$, which, combined with the perturbation bound from Lemma~\ref{lem:q_GDreg}, yields~\eqref{eq:r_GD_reg}. Since the bound~\eqref{eq:q_bound_GD_reg} can be made smaller than $\bar{q}$ by choosing $\mu$ sufficiently small and $t$ sufficiently large, such parameter choices always exist.
\end{proof}

\section{Complementary remarks on the results of Section~\ref{sec:main}}\label{app:remarks}
The following remarks provide additional technical details on the results presented in Section~\ref{sec:main}.
\begin{remark}\label{rem:dim_h}
    While the first term in~\eqref{eq:r_GDsc} can be manipulated 
    by choosing $\mu$ small, the second term is governed by $h$ and $t$. In the regime of large variance $\sigma$, for ZO-GD to operate with the largest admissible step size of GD, one must increase $t$, resulting in more function evaluations per iteration. When the optimal step size of GD is of order $\sigma^{-2/3}$, a small value of $t$ suffices for ZO-GD to track GD closely with the same computational cost. Moreover, Theorem~\ref{th:GDsc} requires $h\in\bigl(0,\frac{\beta}{2L_1^2}\bigr)$, whereas for strongly convex functions GD requires $h\in\bigl(0,\frac{2}{\beta+L_1}\bigr)$~\cite{nesterov2018lectures}. Thus, if $\beta\sim L_1$, the two step-size bounds are of the same order.
\end{remark}

\begin{remark}
We note that the expected performance of ZO-GD for strongly convex objective functions in minimisation problems has been analysed in \cite[Thm.~8]{nesterov_random_2017}. Compared to Theorem~\ref{th:GDsc}, the result in \cite[Thm.~8]{nesterov_random_2017} exhibits an explicit dependence of the iteration complexity on the dimension, and does not account for the variance bound $\sigma$ or the number of sampled directions $t$. Moreover, the prescribed step size scales inversely with $n$, which is not consistent with standard GD.
In contrast, Theorem~\ref{th:GDsc} shows that ZO-GD, in expectation, achieves the same decay rate as GD and converges to a neighbourhood of the GD fixed point, whose
size depends on $\sigma$ and can be manipulated 
via $h$, $\mu$, and $t$. Furthermore, for any step size consistent with GD, one can select $\mu$ and $t$ such that the perturbation is arbitrarily small
and closely track GD.
\end{remark}

\begin{remark}[Relaxation of Assumption~1]\label{rem:relaxed_variance}
Assumption~1 requires the variance of the oracle $g_\mu$ defined in~\eqref{eq:eq23} to be uniformly bounded by a constant~$\sigma^2$.
In some scenarios, the variance of the zeroth-order oracle 
scales with the magnitude of the gradient at the query point.
Here, we show 
analogous results to Lemma~2 and Theorem~3 under the following relaxed assumption. 
Analogous results for 
other lemmas and theorems can be derived following similar steps.

\begin{assumption}
The variance of the oracle $g_\mu$ defined in~\eqref{eq:eq23} satisfies
\begin{equation}\label{eq:relaxed_var}
\mathbb{E}_u\!\left[\left\|g_\mu(x) - \nabla f_\mu(x)\right\|^2\right] \leq \frac{\sigma_0^2}{t} + \frac{\sigma_1^2}{t}\left\|\nabla f(x)\right\|^2,
\end{equation}
where $\sigma_0, \sigma_1 \geq 0$.
\end{assumption}

Note that Assumption~1 is recovered by setting $\sigma_1 = 0$ and $\sigma_0 = \sigma$.
Under this relaxed assumption, the proof of Lemma~2 proceeds as follows.
Consider the recursion for $\mathbb{E}_{U_k}[\|e_{k+1}\|^2]$ derived in~\eqref{eq:sceGD1}.
With $\mathbb{E}[\|v_k\|^2] \leq \sigma^2/t$, equation~(55) in the proof of Lemma~2 gives
\[
\mathbb{E}_{U_k}\!\left[\|e_{k+1}\|^2\right] \leq \rho\, \mathbb{E}_{U_k}\!\left[\|e_k\|^2\right] + \frac{2h^2\sigma^2}{t},
\]
where $\rho = 1 - h\beta + 2L_1^2 h^2$.
Under~\eqref{eq:relaxed_var}, the additive term becomes state-dependent.
Using the $L_1$-Lipschitz continuity of $\nabla f$ and $\nabla f(z^*) = 0$, we have
\[
\left\|\nabla f(\tilde{z}_k)\right\|^2 = \left\|\nabla f(\tilde{z}_k) - \nabla f(z^*)\right\|^2 \leq L_1^2 \left\|\tilde{z}_k - z^*\right\|^2 \leq 2L_1^2 \left\|e_k\right\|^2 + 2L_1^2 \left\|\bar{z}_k - z^*\right\|^2,
\]
where the last inequality uses $\|\tilde{z}_k - z^*\|^2 \leq 2\|e_k\|^2 + 2\|\bar{z}_k - z^*\|^2$.
Defining $\Sigma_1 := \sigma_1^2 L_1^2$, in place of~(55) we obtain
\begin{equation}\label{eq:relaxed_ek}
\mathbb{E}_{U_k}\!\left[\|e_{k+1}\|^2\right] \leq \left(\rho + \frac{4h^2\Sigma_1}{t}\right) \mathbb{E}\!\left[\|e_k\|^2\right] + \frac{2h^2\sigma_0^2}{t} + \frac{4h^2\Sigma_1}{t}\left\|\bar{z}_k - z^*\right\|^2.
\end{equation}
Unlike the case of Assumption~1, bounding $\mathbb{E}[\|e_k\|^2]$ alone is no longer sufficient, since~\eqref{eq:relaxed_ek} couples the error $e_k = \tilde{z}_k - \bar{z}_k$ to the averaged trajectory $\bar{z}_k$.
We therefore analyse the joint evolution of $a_k := \mathbb{E}[\|e_k\|^2]$ and $b_k := \|\bar{z}_k - z^*\|^2$.
Equation~\eqref{eq:relaxed_ek} gives
\begin{equation}\label{eq:ak_recursion}
a_{k+1} \leq \rho'\, a_k + \frac{2h^2\sigma_0^2}{t} + \frac{4h^2\Sigma_1}{t}\, b_k,
\end{equation}
where $\rho' := \rho + 4h^2\Sigma_1/t$.
For the $b_k$ recursion, recall the averaged dynamics~(16): $\bar{z}_{k+1} = \bar{z}_k - h\nabla f(\bar{z}_k) + q_k$.
Expanding, we have
\begin{align*}
b_{k+1} &= \left\|\bar{z}_k - h\nabla f(\bar{z}_k) + q_k - z^*\right\|^2 \\
&= \left\|\bar{z}_k - z^* - h\nabla f(\bar{z}_k)\right\|^2 + \|q_k\|^2 + 2\left\langle \bar{z}_k - z^* - h\nabla f(\bar{z}_k),\, q_k \right\rangle \\
&\leq \left(\sqrt{(1-c)\,b_k} + \|q_k\|\right)^2,
\end{align*}
where $c = 2h\beta L_1/(\beta + L_1) \in (0,1)$ is the GD contraction constant derived 
in \cite[Thm 2.1.15]{nesterov2018lectures}.
Using Young's inequality, for any $\varepsilon > 0$,
\begin{equation}\label{eq:bk_young}
b_{k+1} \leq (1+\varepsilon)(1-c)\, b_k + \left(1 + \tfrac{1}{\varepsilon}\right)\|q_k\|^2.
\end{equation}
From \eqref{eq:q_ZOGD_norm} and Jensen's inequality, $\|q_k\|^2 \leq 2\,\mathrm{bias}^2 + 2h^2 L_1^2\, a_k$, where $\mathrm{bias} := h\mu L_1(n+3)^{3/2}/2$ denotes the smoothing bias term.
Substituting into~\eqref{eq:bk_young} yields
\begin{equation}\label{eq:bk_recursion}
b_{k+1} \leq (1+\varepsilon)(1-c)\, b_k + 2\!\left(1+\tfrac{1}{\varepsilon}\right) h^2 L_1^2\, a_k + 2\!\left(1+\tfrac{1}{\varepsilon}\right) \mathrm{bias}^2.
\end{equation}
Combining~\eqref{eq:ak_recursion} and~\eqref{eq:bk_recursion} and considering their worst-case setting, i.e., equality, the coupled system can be written in matrix form as
\begin{equation}\label{eq:coupled_system}
\begin{bmatrix} a_{k+1} \\ b_{k+1} \end{bmatrix}
= M
\begin{bmatrix} a_k \\ b_k \end{bmatrix}
+ \begin{bmatrix} 2h^2\sigma_0^2/t \\ 2(1+1/\varepsilon)\,\mathrm{bias}^2 \end{bmatrix},
\end{equation}
where
\begin{equation}\label{eq:M_matrix}
M = \begin{bmatrix}
\rho' & \dfrac{4h^2\Sigma_1}{t} \\[8pt]
2\!\left(1+\dfrac{1}{\varepsilon}\right) h^2 L_1^2 & (1+\varepsilon)(1-c)
\end{bmatrix}.
\end{equation}
If spectral radius of $M$ is less than $1$, the coupled system~\eqref{eq:coupled_system} converges to a neighbourhood of the origin determined by $(I - M)^{-1}$ applied to the constant vector, and both $\sup_k a_k$ and $\sup_k b_k$ are uniformly bounded, which means that $sup_k \|q_k\|$ is uniformly bounded.
A sufficient condition for spectral radius of $M$ being less than $1$ is that both diagonal entries are strictly less than~$1$ and
\begin{equation}\label{eq:spectral_condition}
(1 - \rho')\bigl(1 - (1+\varepsilon)(1-c)\bigr) > \frac{4h^2\Sigma_1}{t} \cdot 2\!\left(1+\tfrac{1}{\varepsilon}\right) h^2 L_1^2,
\end{equation}
This follows from standard bounds on the spectral radius of a nonnegative matrix
\cite[Ch~1,~5, and~8]{horn2012matrix}. The condition $\rho' < 1$ is satisfiable for any $\Sigma_1 > 0$ by choosing $t$ sufficiently large or $h$ sufficiently small.
The condition $(1+\varepsilon)(1-c) < 1$ is satisfied for $\varepsilon< \frac{1}{1-c}-1$.
The cross condition~\eqref{eq:spectral_condition} has a right-hand side of order $O(h^4\Sigma_1 L_1^2/t)$ versus a left-hand side of order $O(h^2)$, which is satisfiable for $h$ small enough or $t$ large enough.

Once uniform boundedness of $a_k$ and $b_k$ is established, the perturbation bound takes the form
\[
\sup_k \|q_k\| \leq \mathrm{bias} + hL_1 \sup_k \sqrt{a_k},
\]
which is finite and controllable through the parameters $h$, $\mu$, and $t$.
The remainder of the ISS argument (Theorem~\ref{th:GDsc}) then applies without modification, yielding the same exponential decay rate and convergence to a neighbourhood whose radius depends on the perturbation bound.
The key difference compared to Assumption~1 is that $t$ must now be chosen large enough or $h$ be chosen small enough to ensure that the spectral radius of $M$ is less than $1$, introducing a coupling between the variance parameter~$\Sigma_1 = \sigma_1^2 L_1^2$ and the GD contraction rate~$c$.
\end{remark}

\begin{remark}
Setting $h_2 = 0$ reduces the heavy ball method to gradient descent. In this case, $\rho_{\mathrm{HB}}$ reduces to $\rho_{\mathrm{GD}} = 1 - h_1\beta + 2h_1^2 L_1^2$, and the bound recovers the result of the GD case in Section~\ref{sec:GDsc}.
\end{remark}
 
\begin{remark}
The bound~\eqref{eq:q_bound_HBsc} consists of two terms: the first term, $\frac{h_1\mu\,L_1(n+3)^{3/2}}{2}$, is the smoothing bias and which depends on
the smoothing parameter $\mu$; the second term captures the variance of the ZO oracle amplified through the HB dynamics and can be manipulated through
the number of samples $t$, the step size $h_1$, and the momentum parameter $h_2$ (through $\rho_{\mathrm{HB}}$).
\end{remark}

\begin{remark}
Setting $h_2 = 0$ reduces NAG to gradient descent. In this case, $\rho_{\mathrm{NAG}}$ reduces to $\rho_{\mathrm{GD}} = 1 - h_1\beta + 2h_1^2 L_1^2$, and the bound recovers the result of the GD case in Section~\ref{sec:GDsc}.
\end{remark}
 
\begin{remark}
The bound~\eqref{eq:q_bound_NAGsc} has the same two-term structure as the HB bound~\eqref{eq:q_bound_HBsc}: the first term is the smoothing bias (depending on
$\mu$), and the second term captures the ZO oracle variance amplified through the dynamics (and depending on
$t$, $h_1$, and $h_2$ through $\rho_{\mathrm{NAG}}$).
\end{remark}

\begin{remark}\label{rem:reg_sc}
We note that the step-size condition in Lemma~\ref{lem:q_GDreg}, combined 
with the parametrisation $\lambda = \frac{1-c}{2h}$, implies 
$\lambda > \frac{L_1}{2}$, so the regularised objective is 
strongly convex across the entire admissible parameter range. 
Nevertheless, the $L_2$ regularisation framework remains 
valuable: it provides a systematic mechanism for applying 
the ISS analysis to objectives that are not originally 
strongly convex, with explicit control over the trade-off 
between the regularisation bias and the 
convergence neighbourhood radius through the parameters 
$c$, $h$, $\mu$, and $t$. Extending the analysis to 
parameter regimes where the regularised objective is not 
strongly convex is an interesting direction for future work.
\end{remark}

\begin{remark}\label{rem:reg_HBNAG}
    The same approach of Section~\ref{sec:L2reg} applies to the regularised problem~\eqref{eq:reg_main} with HB or NAG: one can show that their ZO counterparts are perturbed versions of the corresponding FO algorithms with bounded and controllable perturbations, yielding results analogous to Theorem~\ref{th:GD_reg}. The details are omitted for brevity.
\end{remark}

\section{Numerical examples complementary details}\label{app:exps}
In this section, we present complementary experiments and results for the numerical examples given in Section~\ref{sec:exps}. All the tests have been run on a Dell Latitude 7430 Laptop with a 12th Gen Intel Core i7 CPU.

\subsection{Additional experiments for the quadratic objective}\label{app:quad_exp}
This section complements the quadratic experiment in Section~\ref{sec:quad_exp} with additional parameter studies.

\paragraph{Effect of the smoothing parameter $\mu$.} We fix $t=1$ and $h=10^{-5}$ and vary $\mu \in \{10^{-1},4\times10^{-3},10^{-6},10^{-8}\}$. The results are shown in Figure~\ref{fig:param_mu}. For large $\mu$, ZO-GD fails to converge; reducing $\mu$ restores convergence. However, once $\mu$ is sufficiently small, further reduction has negligible effect, since the dominant term in the perturbation bound no longer depends on $\mu$. This is in contrast to $h$, which controls both terms in the perturbation bound.

\begin{figure}[htb]
	\centering \includegraphics[width=0.9\textwidth]{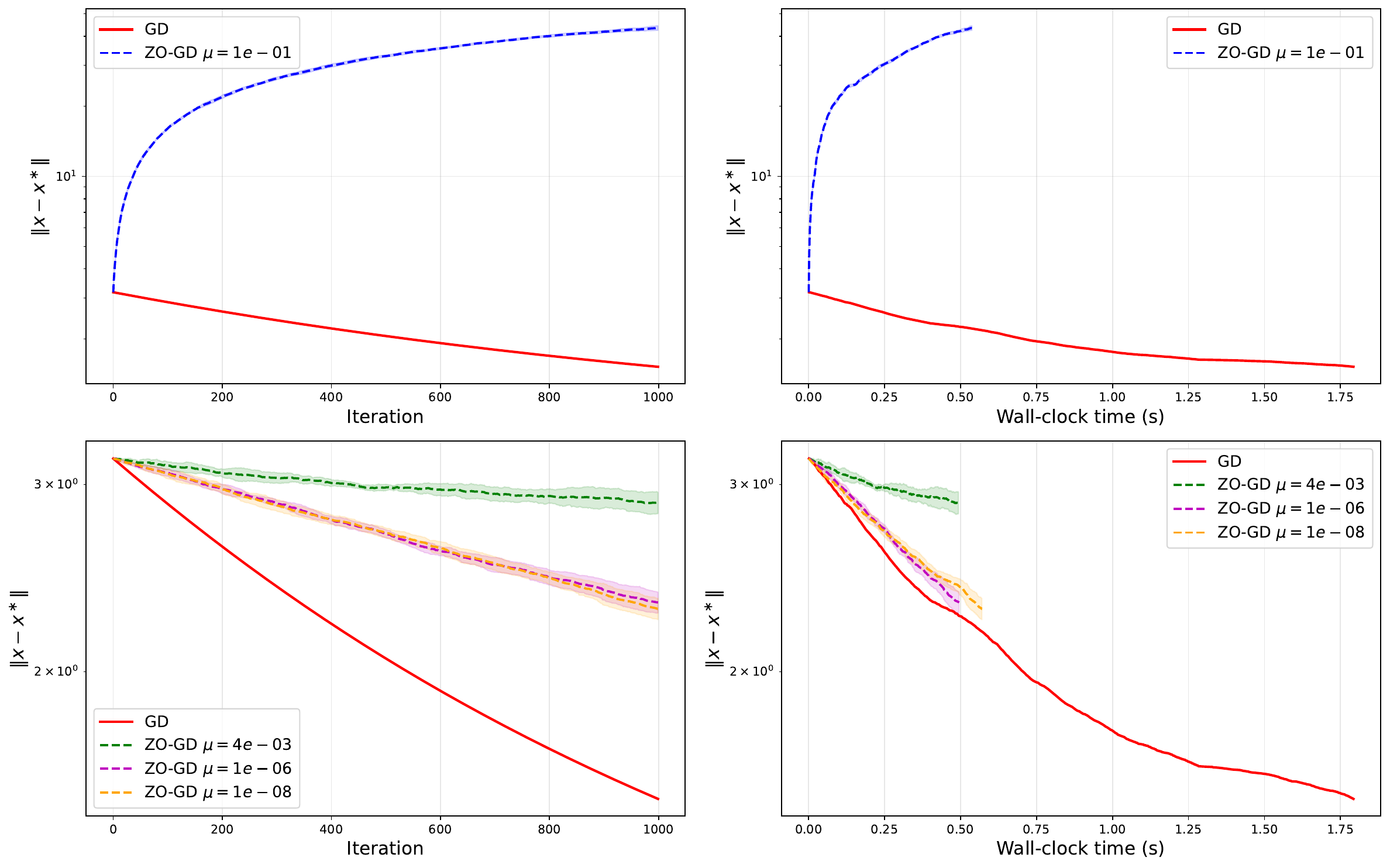}
	\caption{Parameter analysis: smoothing parameter.}\label{fig:param_mu}
\end{figure}

\paragraph{Effect of the number of sampled directions $t$.} We fix $\mu=h=10^{-5}$ and vary $t \in \{1,5,10,20\}$. The results are shown in Figure~\ref{fig:param_t}. Increasing $t$ reduces the perturbation, and ZO-GD converges to a neighbourhood of GD that shrinks with $t$.

\begin{figure}[htb]
	\centering \includegraphics[width=0.9\textwidth]{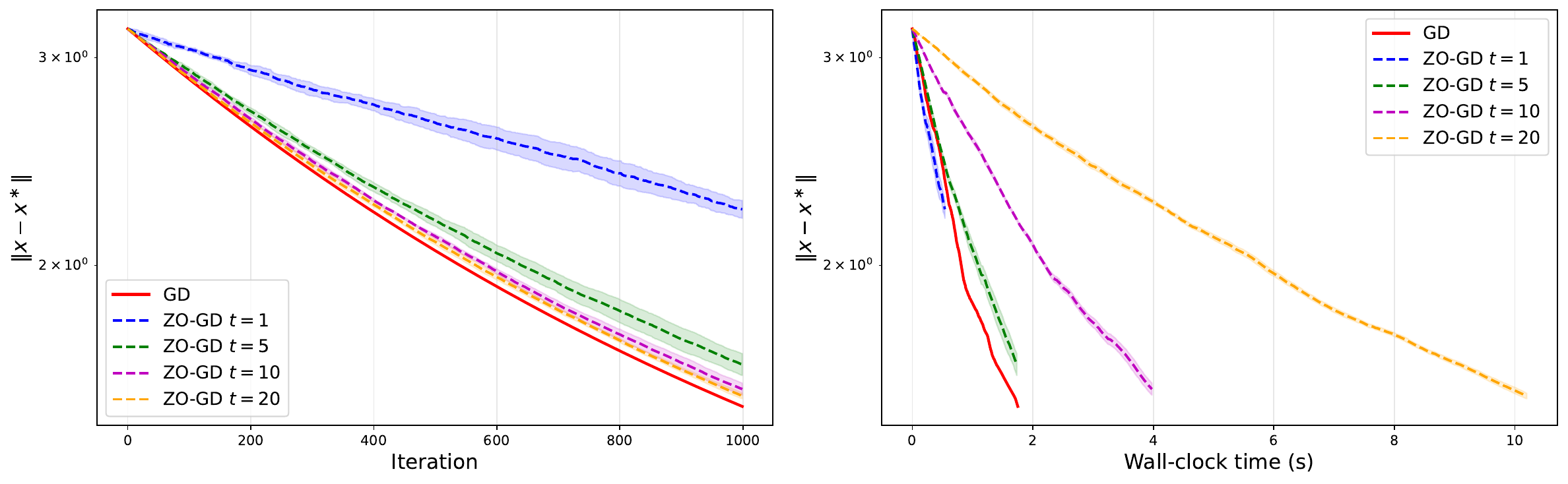}
	\caption{Parameter analysis: number of sampled directions.}\label{fig:param_t}
\end{figure}

\paragraph{Parameter compensation.} We revisit the cases in which GD converged, but ZO-GD diverged, demonstrating that the perturbation can always be controlled by adjusting the remaining parameters. First, consider the divergent case $h=10^{-4}$, $\mu=10^{-5}$, $t=1$ from Figure~\ref{fig:param_lr}. As shown in the first row of Figure~\ref{fig:param_int}, increasing the number of sampled directions to $t=25$ and reducing $\mu=10^{-7}$ brings the perturbation within the admissible range. Second, consider the divergent case $\mu=0.01$, $t=1$, $h=10^{-5}$ from Figure~\ref{fig:param_mu}. Since $\mu$ and $t$ do not interact directly in the perturbation bound, we instead reduce $h$ to $10^{-7}$. The result, shown in the second row of Figure~\ref{fig:param_int}, confirms that ZO-GD again tracks GD.

\begin{figure}[htb]
	\centering \includegraphics[width=0.8\textwidth]{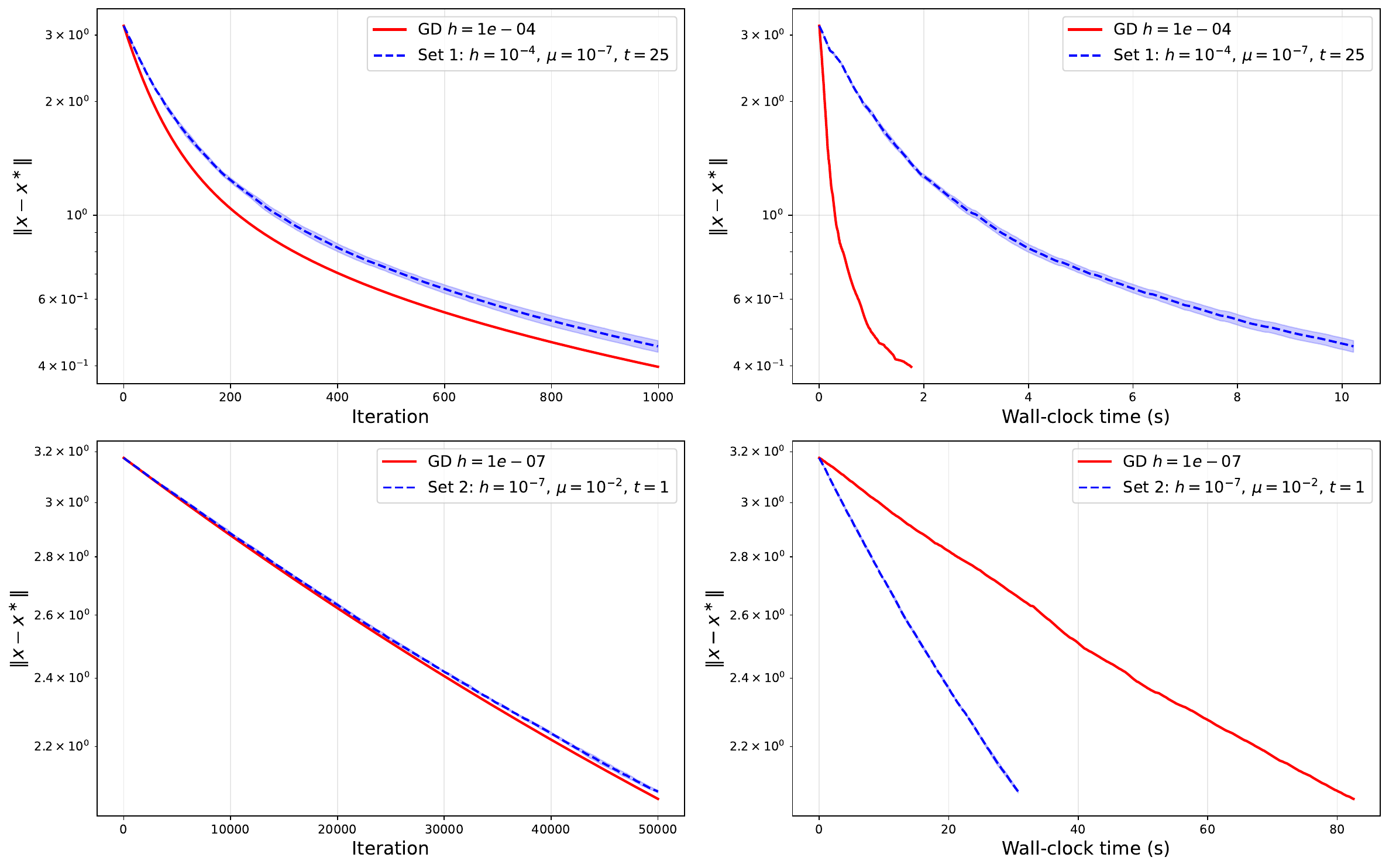}
	\caption{Parameter analysis: compensation for divergent cases.}\label{fig:param_int}
\end{figure}



\subsection{Additional experiments for neural network classification}\label{app:nn_exp}
This section complements the neural network experiment in Section~\ref{sec:nn_exp} by providing a parameter sensitivity analysis.

\paragraph{Parameter sensitivity.} We conduct parameter sweeps for ZO-GD analogous to those in Appendix~\ref{app:quad_exp}. Figure~\ref{fig:nn_param} shows the effect of varying $h$, $\mu$, and $t$ individually while holding the other two fixed, for the network with $100,609$ parameters. The same phenomena observed for the quadratic objective are reproduced: when $h$ is too large, ZO-GD diverges, and reducing $h$ brings ZO-GD progressively closer to GD (Figure~\ref{fig:nn_param}, left). Reducing $\mu$ restores convergence when it is initially too large, but further reduction has diminishing effect once the variance term dominates the perturbation bound (Figure~\ref{fig:nn_param}, middle). Increasing the number of sampled directions $t$ shrinks the neighbourhood, with ZO-GD converging closer to GD for larger $t$ (Figure~\ref{fig:nn_param}, right). These observations are consistent with the two-term structure of the perturbation bound in Lemma~\ref{lem:q_GDreg}: $h$ and $\mu$ control the smoothing bias term, while $h$ and $t$ control the variance term.

\begin{figure}[htb]
	\centering \includegraphics[width=1\textwidth]{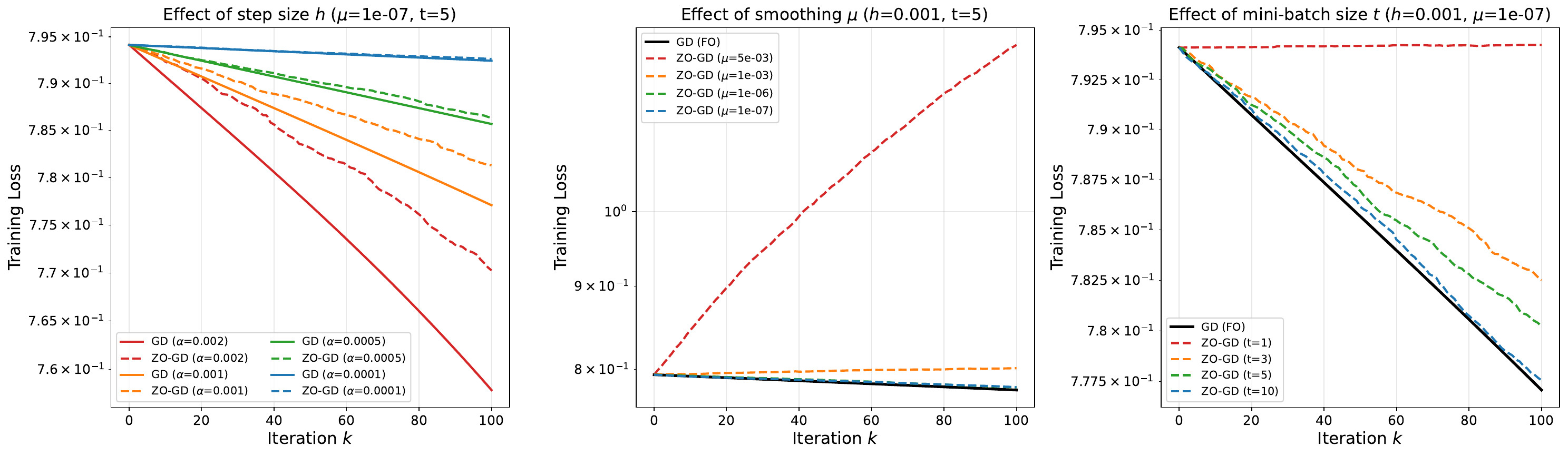}
	\caption{Parameter sensitivity for ZO-GD on the neural network. Left: effect of $h$. Centre: effect of $\mu$. Right: effect of $t$. Solid black: GD (FO); dashed coloured: ZO-GD.}\label{fig:nn_param}
\end{figure}

\newpage

\end{document}